\newtheorem{theorem}{Theorem}[section]
\newtheorem{lemma}[theorem]{Lemma}
\newtheorem{prop}[theorem]{Proposition}
\newtheorem{coro}[theorem]{Corollary}
\theoremstyle{definition}
\newtheorem{definition}[theorem]{Definition}
\newtheorem{question}[theorem]{Question}
\theoremstyle{remark}
\newtheorem{remark}[theorem]{Remark}
\numberwithin{equation}{section}
\theoremstyle{plain}
\newtheorem{maintheorem}{Theorem}
\newcommand{\D}{\ensuremath{\mathbb{D}}}
\newcommand{\Q}{\ensuremath{\mathbb{Q}}}
\newcommand{\R}{\ensuremath{\mathbb{R}}}
\newcommand{\C}{\ensuremath{\mathbb{C}}}
\newcommand{\nt}{\ensuremath{\mathbb{N}}}
\DeclareMathOperator{\Leb}{Leb}
\DeclareMathOperator{\htop}{top}
\DeclareMathOperator{\supp}{supp}
\begin{document}

\title{Real bounds and Lyapunov exponents}

\author{Edson de Faria}
\address{Instituto de Matem\'atica e Estat\'istica, Universidade de S\~ao Paulo}
\curraddr{Rua do Mat\~ao 1010, 05508-090, S\~ao Paulo SP, Brasil}
\email{edson@ime.usp.br}

\author{Pablo Guarino}
\address{Instituto de Matem\'atica e Estat\'istica, Universidade Federal Fluminense}
\curraddr{Rua M\'ario Santos Braga S/N, 24020-140, Niter\'oi, Rio de Janeiro, Brasil}
\email{pablo\_\,guarino@id.uff.br}

\thanks{This work has been supported by ``Projeto Tem\'atico Din\^amica em Baixas Dimens\~oes'' 
FAPESP Grant 2011/16265-2, and by FAPESP Grant 2012/06614-8}

\subjclass[2010]{Primary 37E10; Secondary 37A05, 37E20, 37F10.}

\keywords{Lyapunov exponents, real bounds, critical circle maps, infinitely renormalizable unimodal maps, neutral 
measures on Julia sets}

\begin{abstract} We prove that a $C^3$ critical circle map without periodic points has zero Lyapunov exponent 
with respect to its unique invariant Borel probability measure. Moreover, no critical point of such a map 
satisfies the Collet-Eckmann condition. This result is proved directly from the well-known {\it real a-priori bounds\/}, 
without using Pesin's theory. We also show how our methods yield an analogous result for infinitely renormalizable 
unimodal maps of any combinatorial type. Finally we discuss an application of these facts to the study of neutral 
measures of certain rational maps of the Riemann sphere.
\end{abstract}

\maketitle

\vspace{-0.5cm}

\section{Introduction}

This paper studies critical circle maps (as well as infinitely renormalizable unimodal maps) from the differentiable 
ergodic theory viewpoint. 
The ergodic aspects of one-dimensional dynamical systems have been the object of intense research for quite some time. 
In particular, the study of characteristic or Lyapunov exponents of invariant measures, or physical measures, was 
initiated in this context by Ledrappier, Bowen, Ruelle,  and developed by Keller, Blokh and Lyubich among others. 
See \cite[Chapter V]{demelovanstrien} for a full account, and the references therein.

In this article we show that the Lyapunov exponent of a $C^3$ critical circle map (or of an infinitely renormalizable 
unimodal interval map) is always zero. The general approach leading to zero Lyapunov exponents is by arguing by contradiction 
and using 
Pesin's theory: non-zero Lyapunov exponent implies the existence of periodic orbits (see for instance 
\cite[Supplement 4 and 5]{katokhass} 
and \cite[Chapter 11]{livroconf}), and that would be a contradiction for critical circle maps with irrational 
rotation number. 
Our goal in this paper, however, is to prove that the exponent is zero directly from the real a-priori bounds 
(see Theorem \ref{realbounds}), 
without using Pesin's theory. In fact, the only non-trivial result from ergodic theory we shall use here is 
Birkhoff's Ergodic Theorem.

By a \emph{critical circle map} we mean an orientation preserving $C^3$ circle homeomorphism $f$ with finitely many  
non-flat critical points $c_1,c_2,\ldots,c_N$ ($N\geq 1$). A critical point $c$ is called \emph{non-flat} if in a neighbourhood of $c$ the map $f$ 
can be written as $f(t)=\phi(t)\left|\phi(t)\right|^{d-1}+f(c)$, where $\phi$ is a $C^3$ local diffeomorphism 
with $\phi(c)=0$, and where $d>1$ is a real number known as the \emph{criticality} (or \emph{order}, or \emph{type}, 
or \emph{exponent}) of such critical point.
Critical circle maps have been studied by several authors in the last three decades. From a strictly mathematical viewpoint, 
these studies started with basic topological aspects \cite{hall}, 
\cite{yoccoz}, then evolved -- in the special case of maps with a {\it single\/} critical point -- to geometric bounds 
\cite{herman}, 
\cite{swiatek}, and further to geometric rigidity 
and renormalization aspects, see \cite{avila}, \cite{tesisedson}, \cite{edson}, 
\cite{edsonwelington1}, \cite{edsonwelington2}, \cite{tesegua}, \cite{GMdM}, \cite{guamelo},  \cite{khaninteplinsky}, 
\cite{khmelevyampolsky}, \cite{swiatek}, \cite{yampolsky1}, \cite{yampolsky2}, \cite{yampolsky3} and \cite{yampolsky4}. 
The geometric rigidity and renormalization aspects of the theory remain open for maps with more than one critical point, see Question \ref{QS}. Such brief account bypasses important numerical studies by several physicists, as well as computer-assisted 
and conceptual work by Feigenbaum, Kadanoff, Lanford, Rand, Epstein and others; see \cite{edson} and references therein.

As we said before, this paper studies a critical circle map $f$ from the differentiable ergodic theory viewpoint. 
We will focus on the case when the rotation number of $f$ is irrational, in which case $f$ is uniquely ergodic. 
Moreover, by a theorem of Yoccoz \cite{yoccoz}, $f$ is minimal and therefore topologically conjugate to the corresponding 
rigid rotation. This implies that the support of its unique invariant Borel probability measure is the whole circle 
(see Section \ref{integrability} for more details on the invariant measure). Our main result is the following.

\begin{maintheorem}\label{main} Let $f:S^1 \to S^1$ be a $C^3$ critical circle map with irrational rotation 
number, and let $\mu$ be its unique invariant Borel probability measure.
Then $\log Df$ belongs to $L^1(\mu)$ and it has zero mean:
\[
 \int_{S^1}\!\log Df\,d\mu=0.
\]
Moreover, no critical point of $f$ satisfies the Collet-Eckmann condition.
\end{maintheorem}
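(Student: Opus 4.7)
The plan is to deduce both assertions from the real a-priori bounds (Theorem~\ref{realbounds}) together with Birkhoff's Ergodic Theorem, as advertised in the introduction. Let $q_n$ denote the denominators of the continued fraction expansion of the rotation number of $f$, and let $\mathcal{P}_n$ be the dynamical partition of $S^1$ at level $n$, generated by the iterates of a reference critical point up to time $q_n+q_{n-1}$. The real bounds assert that adjacent atoms of $\mathcal{P}_n$ have uniformly comparable Lebesgue lengths, and that Koebe-type distortion bounds are available for $f^{q_n}$ on suitable intervals of $\mathcal{P}_n$.

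First I would establish $\log Df\in L^1(\mu)$. Near a non-flat critical point $c_i$ of criticality $d_i$ one has $\log Df(t)=(d_i-1)\log|t-c_i|+O(1)$, so integrability reduces to $\int\log|t-c_i|\,d\mu(t)>-\infty$. The real bounds imply that the topological conjugacy between $f$ and the rigid rotation is quasi-symmetric, hence H\"older continuous, so $\mu$ assigns mass $\lesssim r^{\alpha}$ to intervals of radius $r$ around $c_i$ for some $\alpha>0$, which yields the required estimate via a standard layer-cake computation.

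For the zero-mean statement, Birkhoff's Theorem gives $\tfrac{1}{n}\log|Df^n(x)|\to\int_{S^1}\log Df\,d\mu$ for $\mu$-a.e.\ $x$, so it suffices to exhibit a subsequence along which this limit vanishes. For $\mu$-typical $x$, let $\Delta_n(x)\in\mathcal{P}_n$ be the atom containing $x$. The key claim is that $f^{q_n}$ restricts to a diffeomorphism from $\Delta_n(x)$ onto another atom of $\mathcal{P}_n$, and that it has uniformly bounded distortion there by the Koebe part of the real bounds. Since both atoms are uniformly comparable by the real bounds, $|Df^{q_n}(x)|$ is bounded above and below independently of $n$, so $\tfrac{1}{q_n}\log|Df^{q_n}(x)|\to 0$, forcing $\int\log Df\,d\mu=0$.

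The Collet-Eckmann conclusion follows from the same argument applied at the point $x=f(c_i)$: since $f(c_i)$ is not itself a critical point, it lies in the interior of an atom of $\mathcal{P}_n$ on which $f^{q_n}$ is diffeomorphic, and the real bounds again give $|Df^{q_n}(f(c_i))|\le C$ uniformly in $n$, which is incompatible with the exponential lower bound required by the Collet-Eckmann condition. The technical heart of the argument is the uniform bounded-distortion estimate for $f^{q_n}$: since $f^{q_n}$ has up to $Nq_n$ critical points in $S^1$, one must extract from the real bounds enough lateral ``room'' around each relevant atom to apply a one-dimensional Koebe lemma, and verify that the atoms containing $\mu$-typical points, as well as those containing the points $f(c_i)$, enjoy this property.
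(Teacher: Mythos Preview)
Your integrability argument via H\"older regularity of the conjugacy is a legitimate shortcut, different from the paper's direct layer-cake estimate over the annuli $I_k(c_j)\setminus I_{k+2}(c_j)$ (Proposition~\ref{int}); both work.

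The central claim for the zero-mean part, however, has a real gap. You assert that $f^{q_n}$ restricts to a diffeomorphism with bounded distortion on the atom $\Delta_n(x)$, whence $|Df^{q_n}(x)|\asymp 1$. This fails already in the unicritical case: for each atom $\Delta=f^i(I_n)$ with $q_{n+1}-q_n< i<q_{n+1}$, the forward orbit $\Delta,f(\Delta),\dots,f^{q_n-1}(\Delta)$ passes through an interval containing the critical point (namely $f^{q_{n+1}}(I_n)\ni c$), so $f^{q_n}|_\Delta$ is not even a local diffeomorphism and Koebe is unavailable. There are about $q_n$ such atoms, carrying $\mu$-mass of order $q_n/q_{n+1}\ge 1/(a_n+1)$, which is not small when $a_n=1$; so this is not a measure-zero exceptional set you can discard. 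In the multicritical case the obstruction is worse, since the orbit of $\Delta$ (and of the enlarged interval $\Delta^*$ needed for Koebe space) can meet several critical points. The difficulty you flag at the end---finding ``lateral room'' for Koebe---is not the actual obstacle; the map $f^{q_n}$ genuinely has critical points inside many atoms.

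The paper deals with this in two different ways. In the unicritical proof it establishes $Df^{q_{n+1}}\asymp 1$ only on the specific set $I_n\setminus I_{n+2}$ (Lemma~\ref{In}), and then, for a typical $x$, compares $\log Df^{q_{n_k+1}}(x)$ with its value at the first visit of the orbit to $I_{n_k}\setminus I_{n_k+2}$ via the summability estimate of Lemma~\ref{A.5}. In the multicritical proof (Proposition~\ref{newlemma2.3}) it decomposes $f^{q_n}$ at the (at most $3N$) \emph{critical times} of the orbit of $\Delta^*$ and telescopes; this yields the uniform upper bound $\log Df^{q_n}(x)\le C$ for every $x$, but only the weaker lower bound $\log Df^{q_n}(x)\ge -Cn$, and that only for $x$ whose orbit up to time $q_n$ avoids the neighbourhoods $J_{2n}(c_j)$. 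The two-sided bound $|Df^{q_n}(x)|\asymp 1$ that you claim is simply not true; what rescues the conclusion is that $n/q_n\to 0$. Your Collet--Eckmann argument is affected by the same issue: the needed inequality $Df^{q_n}(f(c_i))\le C$ is correct, but it is obtained from the critical-time decomposition (item~(1) of Proposition~\ref{newlemma2.3}), not from a Koebe bound on the atom containing $f(c_i)$.
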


Recall that $f$ satisfies the \emph{Collet-Eckmann} condition at a critical point $c\in\{c_1,c_2,\ldots,c_N\}$ 
if there exist $C>0$ and $\lambda>1$ such that $Df^n\big(f(c)\big) \geq C\lambda^n$ for all $n\in\nt$ 
(see for instance \cite[Chapter V]{demelovanstrien}), or equivalently
\begin{equation}\label{lyapliminf}
 \liminf_{n\to\infty} \frac{1}{n}\log{Df^n(f(c))}\;\geq\; \log{\lambda}\;>\;0\ .
\end{equation}
The integrability of $\log Df$ was obtained by Przytycki 
in \cite[Theorem B]{p93}, where he also proved that $\int\log Df\,d\mu \geq 0$ (see \cite[Appendix A]{juan} for 
an easier proof). We will obtain the integrability again 
(see Proposition \ref{int}) on the way to proving that $\int\log Df\,d\mu=0$. 
It is expected that $\log Df$ will not be integrable if we allow the presence of flat critical points, 
as in \cite{hall}. 

Theorem \ref{main} applies to some classical examples of holomorphic dynamics in the Riemann sphere, see 
Theorem \ref{exrat} in \S \ref{ratapp}.

\begin{remark} The analogue of Theorem \ref{main} for diffeomorphisms is straightforward: if $f$ is an 
orientation-preserving $C^1$ circle diffeomorphism, with irrational rotation number, the function $\psi:S^1\to\R$ 
defined by $\psi=\log Df$ is a continuous function and therefore, by the unique ergodicity of $f$, the sequence 
of continuous functions:
$$\frac{1}{n}\sum_{j=0}^{n-1}\psi\circ f^j$$ converges uniformly to a constant, and this constant must be $\int_{S^1}\log Df\,d\mu$. 
By the chain rule $\sum_{j=0}^{n-1}\psi\circ f^j=\log Df^n$ and, therefore, the sequence of continuous functions $\log Df^n/n$ 
converges to the constant $\int_{S^1}\log Df\,d\mu$ uniformly in $S^1$. Since $f^n$ is a diffeomorphism for all $n\in\nt$, this 
constant must be zero. In our case, however, $\log Df$ is not a continuous function (it is defined only in $S^1\setminus\{c_1,c_2,\ldots,c_N\}$, 
and it is unbounded, see Figure \ref{logcocycle}).
\end{remark}

\begin{remark} The $C^3$-smoothness hypothesis of Theorem \ref{main} could be relaxed to $C^2$ (or even $C^{1+Zygmund}$, see \cite[Section IV.2.1]{demelovanstrien}) -- indeed whatever smoothness is necessary for the real bounds to hold.
\end{remark}

\begin{figure}[t]
\begin{center}~
\hbox to \hsize{\psfrag{0}[][][1]{$c_i$} \psfrag{S}[][][1]{$S^1$}
\psfrag{1}[][][1]{$\;c_j$}  \psfrag{2}[][][1]{$\;c_k$}
\psfrag{L}[][][1]{$\ \ \ \ \ \ \ \ \ \ \ \psi = \log{Df}$} 
\hspace{1.0em} \includegraphics[width=4.0in]{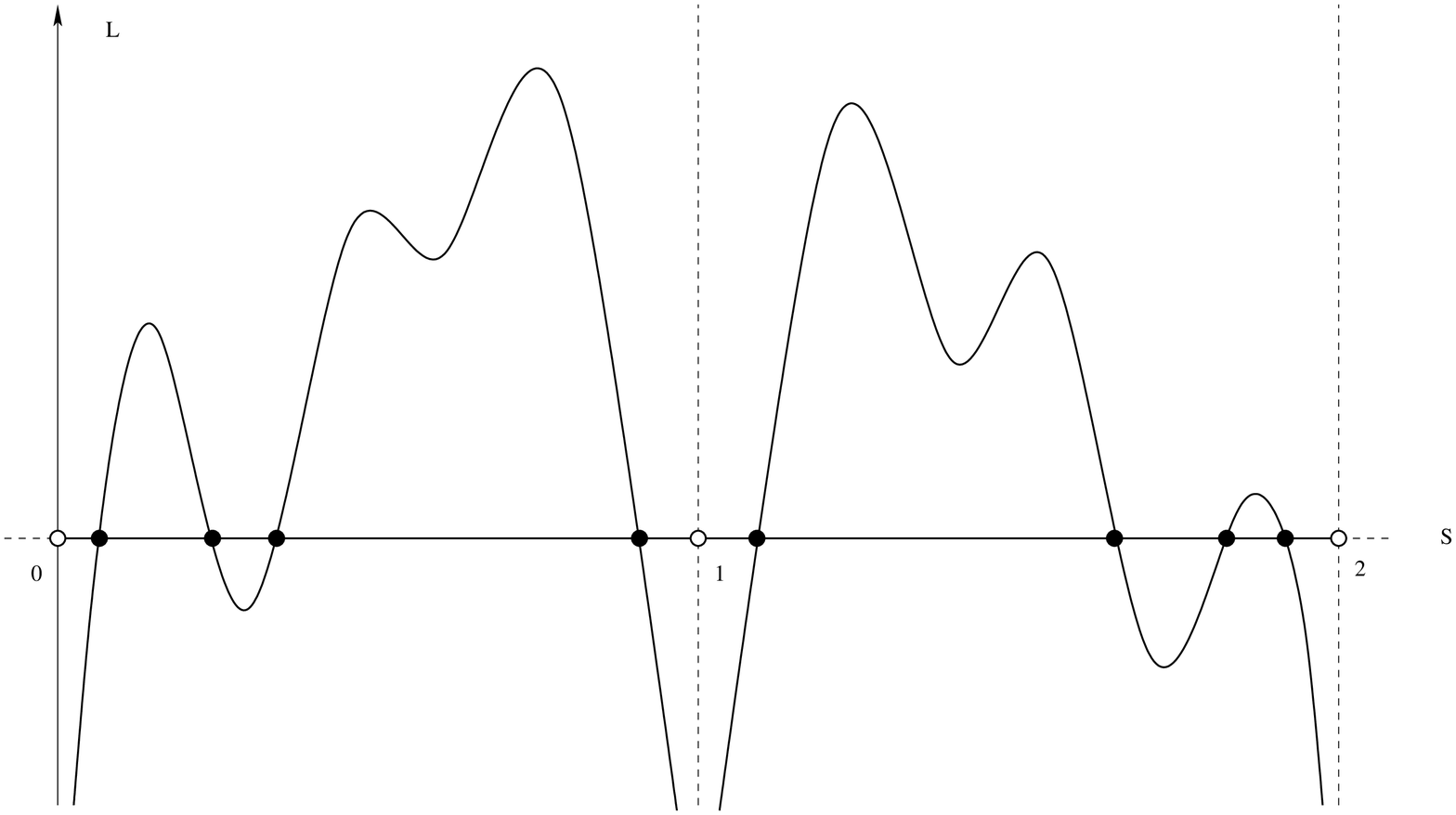}
   }
\end{center}
  \caption[logcocycle]{\label{logcocycle} The cocycle $\psi=\log{Df}$ is unbounded.}
\end{figure}

\subsection{How the paper is organized} In \S \ref{sec:realbounds} we briefly  recall some classical combinatorial facts about circle maps, and also 
the well-known {\it a-priori\/} bounds on the critical orbits of a critical circle map. We deduce from these facts two useful lemmas concerning 
dynamical partitions. In \S \ref{integrability}, we establish the integrability of $\log{Df}$ with respect to the unique $f$-invariant probability 
measure, for any critical circle map without periodic points for which the real bounds of \S \ref{sec:realbounds} hold true. In \S \ref{sec:proofofmain}, 
we use the results of \S \ref{sec:realbounds} and \S \ref{integrability} to prove our main result, namely Theorem \ref{main}. In \S \ref{sec:unimodal}, 
we prove Theorem \ref{unbounded}, an analogous result to Theorem \ref{main} for {\it infinitely renormalizable unimodal maps\/} with non-flat critical 
point. In \S \ref{ratapp}, we discuss an application of Theorem \ref{main} to the ergodic theory of certain Blaschke products as well as quadratic polynomials. Finally, in \S \ref{sec:final}, we conclude by stating a few open questions concerning both critical circle maps and rational maps of the Riemann sphere.

\section{The real bounds}\label{sec:realbounds} Let $f$ be a $C^3$ critical circle map as defined in the introduction, that is, $f$ is an orientation 
preserving $C^3$ circle homeomorphism with finitely many non-flat critical points of odd type. As we have pointed out already, our 
standing assumption is that the rotation number $\rho(f)=\theta\in [0,1)$ is irrational. Therefore it has an infinite continued-fraction expansion, say
$$
\theta\;=\;\left[a_0,a_1,...,a_n,a_{n+1},...\right] \;=\; \lim_{n\to\infty}\,\dfrac{1}{a_0+\dfrac{1}{a_1+\dfrac{1}{a_2+\dfrac{1}{\ddots\dfrac{1}{a_n}}}}}\ .
$$
We define recursively a sequence of \emph{return times} of $\theta$ by:
\begin{center}
$q_0=1,\quad$ $\quad q_1=a_0\quad$ and $\quad q_{n+1}=a_nq_n+q_{n-1}\quad$ for $\quad n \geq 1$.
\end{center}
In particular the sequence $\{q_n\}_{n \geq 1}$ grows at least exponentially fast when $n$ goes to infinity 
(we will use this fact in the proof of Proposition \ref{int} and in the proof of Proposition \ref{Aisgood} below). We recall 
that the numbers $q_n$ are also obtained as the denominators of the truncated expansion of order $n$ of 
$\theta$:
$$
\frac{p_n}{q_n}\;=\;[a_0,a_1,a_2,...,a_{n-1}]\;=\;\dfrac{1}{a_0+\dfrac{1}{a_1+\dfrac{1}{a_2+\dfrac{1}{\ddots\dfrac{1}{a_{n-1}}}}}}\ .
$$
We recall also the following well-known estimates. 

\begin{theorem}\label{convergents} 
For all $n\in\nt$ we have:$$\frac{1}{q_n(q_n+q_{n+1})}<\left|\theta-\frac{p_n}{q_n}\right|\leq\frac{1}{q_nq_{n+1}}<\frac{1}{q_n^2}\ .$$
\end{theorem}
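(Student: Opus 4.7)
The plan is to follow the standard derivation from the arithmetic theory of regular continued fractions. Let $\theta_n := [a_{n-1}, a_n, a_{n+1}, \ldots]$ denote the $n$-th complete quotient of $\theta$, so that $\theta = \theta_1$ and $\theta_{n+1} = a_n + 1/\theta_{n+2}$ for every $n \geq 0$. Combined with the recursions $p_{n+1} = a_n p_n + p_{n-1}$ and $q_{n+1} = a_n q_n + q_{n-1}$ (initialized by $p_0=0$, $p_1=1$, $q_0 = 1$, $q_1 = a_0$ in the excerpt's convention), a routine induction on $n$ yields two identities: the \emph{determinant identity} $p_n q_{n-1} - p_{n-1} q_n = \pm 1$, and the \emph{M\"obius identity} $\theta = (\theta_{n+1}\, p_n + p_{n-1})\big/(\theta_{n+1}\, q_n + q_{n-1})$.

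Subtracting $p_n/q_n$ from both sides of the M\"obius identity and using the determinant identity for the numerator gives, after a direct computation,
$$\left|\theta - \frac{p_n}{q_n}\right| \;=\; \frac{1}{q_n\bigl(\theta_{n+1}\, q_n + q_{n-1}\bigr)}\,.$$
To finish, it suffices to sandwich $\theta_{n+1}$ between $a_n$ and $a_n + 1$. Since $\theta$ is irrational, the tail $\theta_{n+1}$ has itself an infinite continued-fraction expansion, so $a_n < \theta_{n+1} < a_n + 1$ strictly. Multiplying by $q_n>0$ and adding $q_{n-1}$ yields
$$q_{n+1} \;=\; a_n q_n + q_{n-1} \;<\; \theta_{n+1}\, q_n + q_{n-1} \;<\; (a_n+1)\, q_n + q_{n-1} \;=\; q_{n+1} + q_n\,.$$
Taking reciprocals in the identity above produces the two middle inequalities of the theorem. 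The final bound $1/(q_n q_{n+1}) < 1/q_n^2$ follows from the strict monotonicity $q_{n+1} > q_n$, which holds for every $n \geq 1$ under the excerpt's indexing (one checks $q_2 = a_1 q_1 + q_0 > q_1$ and then proceeds inductively from the recursion).

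As this is a classical result, no conceptual obstacle is involved. The only point requiring care is bookkeeping: the paper's convention ($q_0=1$, $q_1=a_0$) is shifted by one from the more common choice ($q_{-1}=0$, $q_0=1$), and the inductions establishing the determinant and M\"obius identities must be aligned with this shift. Once the base cases are checked, the inductive steps are immediate from the two recursions.
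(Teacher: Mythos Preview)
Your argument is correct and is precisely the standard derivation found in the references the paper cites (Khinchin, Lang); the paper itself does not give a proof of this statement but simply defers to those sources. Your attention to the indexing shift and to the restriction $n\geq 1$ for the strict inequality $q_{n+1}>q_n$ is appropriate.
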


For a proof, see for instance Theorems 9 and 13 in \cite[Ch.~I]{khin}, or Theorem 5 in \cite[Ch.~I]{lang}. 

\subsection{Dynamical partitions}\label{dynpart} Denote by $I_n(c)$ the interval $[c,f^{q_n}(c)]$, where $c \in \{c_1,c_2,\ldots,c_N\}$ denotes a critical 
point of $f$, and define $\mathcal{P}_n(c)$ as:
%$$\mathcal{P}_n(c)=\big\{I_n(c),f(I_n(c)),...,f^{q_{n+1}-1}(I_n(c))\big\} \bigcup 
%\big\{I_{n+1}(c),f(I_{n+1}(c)),...,f^{q_n-1}(I_{n+1}(c))\big\}\,.$$
\[
 \mathcal{P}_n(c)=\big\{f^i(I_n(c)):\,0\leq i\leq q_{n+1}-1\big\} \cup \big\{f^j(I_{n+1}(c)):\,0\leq j\leq q_n-1\big\}\ .
\]

A crucial combinatorial fact is that $\mathcal{P}_n(c)$ is a partition (modulo boundary points) of the circle for every 
$n \in \nt$. We call it the \emph{n-th dynamical partition} of $f$ associated with the point $c$. Note that the 
partition $\mathcal{P}_n(c)$ is determined by the finite piece of orbit$$\big\{f^{j}(c): 0 \leq j \leq q_n + q_{n+1}-1\big\}\,.$$

As we are working with critical circle maps, our partitions in this article are always determined by a critical orbit. 
Our proof of Theorem \ref{main} is based on the following result.

\begin{theorem}[The real bounds]\label{realbounds} There exists a constant $K_0>1$ with the following property.
Given a $C^3$ critical circle map $f$ with irrational rotation number there exists $n_0=n_0(f)$ such that, 
for each critical point $c$ of $f$, for all $n \geq n_0$, 
and for every pair $I,J$ of 
adjacent atoms of $\mathcal{P}_n(c)$ we have:
\begin{equation}\label{comp}
K_0^{-1}|I| \leq |J| \leq K_0|I|,
\end{equation}
where $|I|$ denotes the Euclidean length of an interval $I$ in the real line. 
\end{theorem}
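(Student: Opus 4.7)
The plan is to follow the classical Herman--Swiatek--de Melo--van Strien strategy extended to the multi-critical setting. The analytic backbone is a \emph{cross-ratio distortion inequality} for $C^3$ maps with non-flat critical points: if $L\subset M$ are intervals with $M^\pm$ denoting the two components of $M\setminus L$, and we set $[M,L]=|L|\,|M|/(|M^-|\,|M^+|)$, then for every iterate $f^k$ whose restriction to $M$ is a diffeomorphism,
\[
[f^k(M),f^k(L)] \;\geq\; [M,L]\cdot\exp\!\Bigl(-C\sum_{i=0}^{k-1}|f^i(M)|\Bigr),
\]
where the constant $C$ depends only on $\|f\|_{C^3}$ and on the criticalities $d_1,\dots,d_N$. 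Whenever $M,f(M),\dots,f^{k-1}(M)$ have pairwise disjoint interiors, the sum in the exponent is dominated by the length of $S^1$, producing a genuine Koebe-type distortion bound independent of $k$.

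The second ingredient is the local form of $f$ near each critical point: non-flatness says that $f$ is locally conjugate to the power map $t\mapsto |t|^{d}$, so two intervals on the same side of $c$ with comparable lengths are sent to intervals with comparable lengths, the constant depending only on the criticality $d$. It therefore suffices to establish comparability of $|I_n(c)|$, $|I_{n+1}(c)|$, and the atoms of $\mathcal{P}_n(c)$ abutting $c$, for each critical point and all sufficiently large $n$.

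To propagate this local comparability to arbitrary adjacent pairs in $\mathcal{P}_n(c)$, one uses the fact that each atom is an iterate $f^j(I_n(c))$ with $j<q_{n+1}$ or $f^j(I_{n+1}(c))$ with $j<q_n$, and that the atoms of $\mathcal{P}_n(c)$ have pairwise disjoint interiors. Any two adjacent atoms share a common endpoint which is itself on the orbit of a critical point; pulling back by the appropriate inverse branch of a suitable iterate $f^{-j}$ (along an orbit that avoids the critical set until the last step) returns us to a pair of atoms meeting at a critical point, where comparability is known by the previous step. The cross-ratio inequality then transports the comparability forward with only a universal loss, yielding $K_0$.

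The main obstacle is starting the induction: one needs an \emph{a-priori} lower bound on $|I_{n+1}(c)|/|I_n(c)|$ that does not circularly depend on the bounds yet to be proved. The standard way around this is an ``improving bounds'' argument: start from a very weak comparability (with a large constant possibly depending on $f$), then apply the cross-ratio inequality together with the non-flat local form to show that the bound must improve under iteration, until it stabilises at a universal constant. The non-flatness of each critical point is precisely what prevents degeneracy in this scheme, and it is where the $C^3$ (or $C^{1+\mathrm{Zygmund}}$) regularity becomes essential.
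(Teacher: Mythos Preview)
The paper does not actually prove Theorem~\ref{realbounds}; it states the result and attributes it to \'Swi\c{a}tek and Herman in the unicritical case, notes that the cross-ratio inequality extends to the multicritical setting (citing Petersen), and defers a detailed proof to the forthcoming article \cite{estevezdefaria}. So there is no proof in the paper to compare against beyond this attribution.

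That said, your outline is precisely the strategy the paper points to: the \'Swi\c{a}tek cross-ratio inequality combined with the method of \cite[Section~3]{edsonwelington1}. Your three ingredients --- cross-ratio/Koebe distortion along orbits with bounded intersection multiplicity, the power-law local form at each non-flat critical point, and an ``improving bounds'' bootstrap to obtain a universal constant --- are exactly the components of the standard argument. One small caveat: as you have written it, your cross-ratio inequality is stated only for iterates that are diffeomorphisms on $M$, so critical points must be handled entirely through the local power-law step. The \'Swi\c{a}tek inequality in its usual form is stronger in that it tolerates critical points inside the configuration; in practice either formulation can be made to work, but the latter streamlines the propagation step considerably. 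Apart from this, your plan matches the approach the paper invokes.
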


Of course for a particular $f$ we can choose $K_0>1$ such that \eqref{comp} holds for all $n\in\nt$. Theorem \ref{realbounds} was proved by 
\'Swi\c{a}tek and Herman (see \cite{herman} and \cite{swiatek}) in the case when $f$ has a single critical point. The original proof is based on the 
so-called {\it cross-ratio inequality\/} of \'Swi\c{a}tek. As it turns out, this inequality is valid also in the case when the map $f$ has several critical 
points (all of non-flat type), see \cite{petersen}. This fact combined with the method of proof presented in 
\cite[Section 3]{edsonwelington1} yields 
the above general result. A detailed proof will appear in \cite{estevezdefaria}. 

Note that for a rigid rotation we have $|I_n|=a_{n+1}|I_{n+1}|+|I_{n+2}|$. 
If $a_{n+1}$ is big, then $I_n$ is much larger than $I_{n+1}$. Thus, even for rigid rotations, real bounds do not 
hold in general.

In the case of maps with a single critical point, one also has the following corollary, which suitably bounds 
the distortion of first return maps. 

\begin{coro} \label{realbounds2} Given a $C^3$ critical circle map $f$ with irrational rotation number and a unique critical point $c \in S^1$, there exists a constant $K_1>1$ such that the following facts hold true for each $n \geq n_0$:
\begin{enumerate}
\item[(i)] For all $x,y \in f(I_{n+1}(c))$, we have
$$\frac{1}{K_1}\leq\frac{\big|Df^{q_n-1}(x)\big|}{\big|Df^{q_n-1}(y)\big|}\leq K_1\ .$$
\item[(ii)] For all $x,y \in f(I_{n}(c))$, we have
$$\frac{1}{K_1}\leq\frac{\big|Df^{q_{n+1}-1}(x)\big|}{\big|Df^{q_{n+1}-1}(y)\big|}\leq K_1\ .$$
\end{enumerate}
\end{coro}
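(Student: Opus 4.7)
The plan is to derive both distortion bounds from the Koebe principle, with the real bounds of Theorem~\ref{realbounds} supplying the uniform Koebe space at every scale $n$. I treat (i); part (ii) is entirely analogous after swapping the roles of $I_n(c)$ and $I_{n+1}(c)$. Setting $J := f(I_{n+1}(c))$, the key combinatorial observation is that the forward iterates $f^i(J) = f^{i+1}(I_{n+1}(c))$ for $i = 0, 1, \ldots, q_n - 1$ are pairwise disjoint intervals lying among the atoms of the dynamical partition $\mathcal{P}_n(c)$ (they are precisely the atoms of the form $f^j(I_{n+1}(c))$ for $j = 1, \ldots, q_n$). Since $c$ is the unique critical point of $f$ and never lies in the interior of an atom of $\mathcal{P}_n(c)$, no $f^i(J)$ contains $c$ in its interior, so $f^{q_n-1}$ restricts to a diffeomorphism on $J$.

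Next I construct a Koebe neighborhood of $J$. Let $L$ and $R$ denote the two atoms of $\mathcal{P}_n(c)$ immediately adjacent to $J$, and set $T := L \cup J \cup R$. By Theorem~\ref{realbounds}, $|L|, |R| \geq K_0^{-1}|J|$, so $T$ provides a definite Koebe space on both sides of $J$. A parallel combinatorial argument shows that $f^{q_n-1}$ extends to a diffeomorphism on $T$, and the real bounds further imply that $\sum_{i=0}^{q_n-1}|f^i(T)|$ is bounded by a constant depending only on $K_0$ (each $|f^i(T)|$ is comparable to $|f^i(J)|$, while $\sum_i |f^i(J)| \leq 1$). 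The conclusion then follows from the Koebe distortion principle for $C^3$ maps with non-flat critical points, derived from \'{S}wi\c{a}tek's cross-ratio inequality (see \cite[Ch.~IV]{demelovanstrien} and \cite{petersen}): the uniform Koebe space together with the bounded total length of the orbit chain yields a constant $K_1 > 1$ such that $|Df^{q_n-1}(x)|/|Df^{q_n-1}(y)| \leq K_1$ for all $x, y \in J$. Part (ii) follows identically with $I_n(c)$ and $q_{n+1}-1$ in place of $I_{n+1}(c)$ and $q_n - 1$.

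I expect the main obstacle to be controlling the distortion when the orbit $\{f^i(J)\}$ visits atoms adjacent to the critical point $c$, where $Df$ becomes small and $\log Df$ varies dramatically. The cross-ratio formulation of Koebe is what makes the argument work: it bounds the total multiplicative distortion of $f^{q_n-1}$ in terms of the \emph{geometry} of the orbit (lengths and Koebe space), bypassing the unboundedness of $\log Df$ near $c$ entirely.
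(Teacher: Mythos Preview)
Your approach via Koebe's distortion principle, with the space furnished by the real bounds, is exactly the paper's: the paper gives no argument beyond the sentence ``follows from Koebe's distortion principle (see \cite[Section 3]{edsonwelington1}).'' One caution on the step you label a ``parallel combinatorial argument'': the extension of $f^{q_n-1}$ to a genuine diffeomorphism on $T=L\cup J\cup R$ is not automatic, since the forward iterates of the flanking atoms $L,R$ need not remain atoms of $\mathcal{P}_n(c)$ and could in principle straddle $c$ --- but the cross-ratio form of Koebe you invoke in your final paragraph sidesteps this, as it only requires $\sum_{i}|f^{i}(T)|$ to be bounded, and that holds because any orbit segment of length $q_n$ visits each atom of $\mathcal{P}_n(c)$ at most once, so the family $\{f^{i}(T)\}_{0\le i<q_n}$ has intersection multiplicity at most~$3$.
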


The control on the distortion of the return maps in the above corollary follows from Koebe's distortion principle (see \cite[Section 3]{edsonwelington1}). When $f$ has two or more critical points, the estimates given in the Corollary may fail, because the intervals $f(I_{n}(c))$ and $f(I_{n+1}(c))$ could in principle contain other critical points of $f^{q_{n+1}-1}$ and $f^{q_n-1}$, respectively. 

\begin{remark}
 We shall henceforth use the constant $K=\max\{K_0,K_1\}>1$ whenever we invoke the real bounds. 
\end{remark}

For our purposes, an important consequence of Corollary \ref{realbounds2} is the following auxiliary result. 

\begin{lemma}\label{In} Let $f$ be as in Corollary \ref{realbounds2}. 
There exists $C>1$ such that for all $n\in\nt$ and for all $x \in I_n(c) \setminus I_{n+2}(c)$:$$\frac{1}{C}\leq Df^{q_{n+1}}(x)\leq C\,.$$
\end{lemma}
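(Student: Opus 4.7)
The plan is to partition $I_n(c)\setminus I_{n+2}(c)$ by atoms of the partition $\mathcal{P}_{n+1}(c)$ and apply the Mean Value Theorem to $f^{q_{n+1}}$ on each atom, with distortion control coming from Corollary~\ref{realbounds2}(ii). By the standard combinatorics of dynamical partitions one has the tiling
\[
I_n(c) \;=\; I_{n+2}(c) \;\sqcup\; \bigsqcup_{k=0}^{a_{n+1}-1} B_k, \qquad B_k \;:=\; f^{q_n + k q_{n+1}}(I_{n+1}(c)),
\]
where each $B_k$ is a long atom of $\mathcal{P}_{n+1}(c)$; spatially, $B_0$ is adjacent to the endpoint $f^{q_n}(c)$ of $I_n(c)$ and $B_{a_{n+1}-1}$ is adjacent to $I_{n+2}(c)$.

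On each $B_k$ the crucial observation is that $f^{q_{n+1}}$ has uniformly bounded distortion. Writing $f^{q_{n+1}} = f^{q_{n+1}-1}\circ f$, Corollary~\ref{realbounds2}(ii) gives bounded distortion of the factor $f^{q_{n+1}-1}$ on $f(I_n(c)) \supset f(B_k)$; and for the initial factor $f$, real bounds applied to adjacent atoms of $\mathcal{P}_{n+1}(c)$ imply that the length $|B_k|$ is at most $K$ times the distance from $B_k$ to the critical point $c$, so $f$ has bounded distortion on $B_k$ as well. The Mean Value Theorem combined with this distortion control then reduces the required estimate on $Df^{q_{n+1}}|_{B_k}$ to showing that the ratio $|f^{q_{n+1}}(B_k)|/|B_k|$ is bounded above and below uniformly in $n$ and $k$.

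For $k=0,1,\dots,a_{n+1}-2$ the image $f^{q_{n+1}}(B_k) = B_{k+1}$ is another atom of $\mathcal{P}_{n+1}(c)$ adjacent to $B_k$, so the real bounds give $|B_{k+1}|/|B_k| \asymp 1$ at once. The main obstacle is the case $k=a_{n+1}-1$: here the image $f^{q_{n+1}}(B_{a_{n+1}-1}) = f^{q_{n+2}}(I_{n+1}(c))$ \emph{straddles} $c$ and is not itself an atom of $\mathcal{P}_{n+1}(c)$. To estimate its length I would descend one level and use the analogous decomposition of $I_{n+1}(c)$ by atoms of $\mathcal{P}_{n+2}(c)$: this locates the endpoint $f^{q_{n+1}+q_{n+2}}(c)$ of $f^{q_{n+2}}(I_{n+1}(c))$ inside $I_{n+1}(c)$ at distance comparable to $|I_{n+1}(c)|$ from $c$ (via real bounds on the finer partition), while its companion endpoint $f^{q_{n+2}}(c)$ lies at distance $|I_{n+2}(c)|$ from $c$ on the $I_n(c)$ side. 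Since $|B_{a_{n+1}-1}| \asymp |I_{n+2}(c)| \asymp |I_{n+1}(c)|$ by the real bounds, the ratio $|f^{q_{n+2}}(I_{n+1}(c))|/|B_{a_{n+1}-1}|$ is also $\asymp 1$. Assembling all cases yields $Df^{q_{n+1}}(x) \asymp 1$ on $I_n(c)\setminus I_{n+2}(c)$, with constants depending only on the real-bounds constant $K$.
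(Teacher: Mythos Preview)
Your argument is correct, but the paper's proof is considerably shorter and avoids the decomposition into atoms of $\mathcal{P}_{n+1}(c)$ altogether. The paper simply writes $Df^{q_{n+1}}(x)=Df^{q_{n+1}-1}\big(f(x)\big)\cdot Df(x)$ and estimates each factor on the \emph{whole} of $I_n\setminus I_{n+2}$ at once. By Corollary~\ref{realbounds2}(ii) applied to $f(I_n)$ one gets $Df^{q_{n+1}-1}\big(f(x)\big)\asymp |f^{q_{n+1}}(I_n)|/|f(I_n)|$; the real bounds give $|f^{q_{n+1}}(I_n)|\asymp |I_n|$ (since $I_{n+1}\subset f^{q_{n+1}}(I_n)\subset I_n\cup I_{n+1}$), and non-flatness gives $|f(I_n)|\asymp |I_n|^{d}$, so $Df^{q_{n+1}-1}\big(f(x)\big)\asymp |I_n|^{1-d}$. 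For the remaining factor, non-flatness gives $Df(x)\asymp |x-c|^{d-1}$, and for $x\in I_n\setminus I_{n+2}$ one has $|I_{n+2}|\leq |x-c|\leq |I_n|$ with $|I_{n+2}|\asymp |I_n|$ by the real bounds; hence $Df(x)\asymp |I_n|^{d-1}$, and the two factors multiply to $\asymp 1$.

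What your route buys is a more combinatorial picture: you track how $f^{q_{n+1}}$ acts on each long atom $B_k$ of $\mathcal{P}_{n+1}(c)$ inside $I_n$, reducing everything to adjacency relations in the dynamical partition. The cost is the extra case analysis for $k=a_{n+1}-1$, where you must descend to level $n+2$ to locate the endpoint $f^{q_{n+1}+q_{n+2}}(c)$ of the straddling interval. The paper's approach sidesteps this entirely by treating $I_n$ as a single interval and using the power-law form of $f$ near $c$ directly; in effect, the non-flatness estimate $Df(x)\asymp |I_n|^{d-1}$ absorbs all of the atom-by-atom bookkeeping you carry out.
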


\begin{proof}[Proof of Lemma \ref{In}] 
For each $k\in \mathbb{N}$, let us write $I_k$ instead of $I_k(c)$ in this proof. 
Fix $n\in\nt$ and $x \in I_n \setminus I_{n+2}$. By Corollary \ref{realbounds2} the 
map $f^{q_{n+1}-1}:f(I_n) \to f^{q_{n+1}}(I_n)$ has bounded distortion. In particular, there exists $C_0>1$ such that:
$$\frac{1}{C_0}\frac{\big|f^{q_{n+1}}(I_n)\big|}{\big|f(I_n)\big|}\leq Df^{q_{n+1}-1}(f(x)) 
\leq C_0\,\frac{\big|f^{q_{n+1}}(I_n)\big|}{\big|f(I_n)\big|}\,.$$

Since $I_{n+1} \subset f^{q_{n+1}}(I_n) \subset I_n \cup I_{n+1}$ we obtain from the real bounds that 
$(1/K)|I_n| \leq \big|f^{q_{n+1}}(I_n)\big| \leq (1+K)|I_n|$. Therefore:
$$\frac{1}{C_0K}\frac{|I_n|}{\big|f(I_n)\big|}\leq Df^{q_{n+1}-1}(f(x)) \leq C_0(1+K)\frac{|I_n|}{\big|f(I_n)\big|}\,.$$

Since $c$ is a non-flat critical point of $f$ of odd type $d$ there exist $0<C_1<C_2$ such that 
$C_1|I_n|^d\leq|f(I_n)| \leq C_2|I_n|^d$ for all $n\in\nt$, and then:
$$\frac{1}{C_0C_2K}\frac{1}{|I_n|^{d-1}}\leq Df^{q_{n+1}-1}(f(x)) \leq \frac{C_0(1+K)}{C_1}\frac{1}{|I_n|^{d-1}}\,.$$

Again using that $c$ is non-flat there exist $0<A<B$ such that $A|x-c|^{d-1} \leq Df(x) \leq B|x-c|^{d-1}$ for all $x$ 
in a small but fixed neighbourhood around the critical point. In particular, 
$$\big(A/K^{2(d-1)}\big)|I_n|^{d-1} 
\leq Df(x) \leq B|I_n|^{d-1}$$ 
for all $x \in I_n \setminus I_{n+2}$ and for all $n\in\nt$, since $|x-c|\geq|I_{n+2}| 
\geq |I_n|/K^2$, again by the real bounds. With this at hand we deduce that
$$\frac{A}{C_0C_2K^{2d-1}}\leq Df^{q_{n+1}}(x)\leq\frac{BC_0(1+K)}{C_1}\,,$$
for all $x \in I_n \setminus I_{n+2}$ 
and for all $n\in\nt$. This finishes the proof of the lemma, provided we take 
$$C \geq \max\big\{C_0C_2K^{2d-1}/A, BC_0(1+K)/C_1\big\}\,.$$
\end{proof}

The following consequence of the real bounds was inspired by \cite[Lemma A.5, page 379]{edsonwelington1}. It holds under the general assumptions of Theorem \ref{realbounds}, for maps with an arbitrary number of critical points. 
For each $n \geq 1$ let:
$$S_n(c)=\sum_{I\in\mathcal{P}_n(c)\setminus\{I_n(c),I_{n+1}(c)\}}\frac{|I|}{d(c,I)}\,,$$
where $d(c,I)$ denotes the Euclidean distance between an interval $I \subset S^1$ and the critical point $c$.

\begin{lemma}\label{A.5} For each critical point $c$ of $f$, the sequence $\displaystyle{\left\{\frac{S_n(c)}{n}\right\}_{n \geq 1}}$ is bounded.
\end{lemma}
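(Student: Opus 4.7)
The plan is to bound $S_n(c)$ by a telescoping logarithmic sum and then control the base of that logarithm by a second application of the real bounds.

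First I would order the atoms of $\mathcal{P}_n(c)$ cyclically around $S^1$, starting at $c$:
\[ I_n(c),\,J_1,\,J_2,\,\ldots,\,J_M,\,I_{n+1}(c). \]
Setting $\ell_k=|J_k|$, let $D_k^+ = |I_n(c)|+\ell_1+\cdots+\ell_{k-1}$ be the clockwise arc-distance from $c$ to the near endpoint of $J_k$, and define $D_k^-$ analogously in the counterclockwise direction (starting from $|I_{n+1}(c)|$ and accumulating $\ell_M,\ell_{M-1},\ldots,\ell_{k+1}$). Since $d(c,J_k)=\min(D_k^+,D_k^-)$, one has $\ell_k/d(c,J_k)\leq \ell_k/D_k^+ + \ell_k/D_k^-$, so it suffices to bound $\sum_k \ell_k/D_k^+$ and $\sum_k \ell_k/D_k^-$ separately by $O(n)$; by symmetry I will only treat the first.

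The main step is telescoping. By Theorem \ref{realbounds}, consecutive atoms of the cyclic sequence above are adjacent atoms of $\mathcal{P}_n(c)$, so their lengths are comparable within the factor $K$; in particular $\ell_k\leq K\ell_{k-1}$ for $k\geq 2$ and $\ell_1\leq K|I_n(c)|$, which gives the uniform bound $\ell_k/D_k^+\leq K$. Combining this with the elementary inequality $x\leq (1+K)\log(1+x)$ on $[0,K]$ (a consequence of $\log(1+x)\geq x/(1+x)$) and the identity $D_{k+1}^+=D_k^++\ell_k$ yields
\[ \frac{\ell_k}{D_k^+}\leq (1+K)\log\!\left(\frac{D_{k+1}^+}{D_k^+}\right), \]
and the sum telescopes to
\[ \sum_{k=1}^{M}\frac{\ell_k}{D_k^+}\leq (1+K)\log\!\left(\frac{D_{M+1}^+}{D_1^+}\right)\leq (1+K)\log\!\left(\frac{|S^1|}{|I_n(c)|}\right). \]

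The final ingredient is the bound $\log(1/|I_n(c)|)=O(n)$. Here I would apply the real bounds once more: $I_n(c)$ and $I_{n+1}(c)$ are adjacent atoms of $\mathcal{P}_n(c)$, as they share the boundary point $c$, hence $|I_{n+1}(c)|\geq K^{-1}|I_n(c)|$ for every $n\geq n_0$. Iterating gives $|I_n(c)|\geq K^{-(n-n_0)}|I_{n_0}(c)|$, so $\log(1/|I_n(c)|)\leq n\log K + C$. The analogous bound with $|I_{n+1}(c)|$ in place of $|I_n(c)|$ handles the counterclockwise sum, and assembling the pieces yields $S_n(c)\leq 2(1+K)(n\log K+C')$ for $n\geq n_0$; the finitely many remaining values contribute only a bounded amount, so $S_n(c)/n$ is bounded. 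The main point of care is the circular geometry: because $\mathcal{P}_n(c)$ wraps around $S^1$, the distance from $c$ to an atom may be realised on either side of $c$, which is why I split $S_n(c)$ via the inequality $\max\leq\text{sum}$ and telescope from each end. Once that bookkeeping is in place, the argument is just the two-pronged use of the real bounds: first to make neighbouring atom lengths comparable so that telescoping applies, and then to bound the exponential rate at which $|I_n(c)|$ decays.
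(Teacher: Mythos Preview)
Your argument is correct, but it is a genuinely different proof from the one in the paper. The paper proceeds \emph{recursively}: it shows that $0\le S_{n+1}(c)-S_n(c)\le K^2$ for every $n$, by analysing how $\mathcal{P}_{n+1}$ refines $\mathcal{P}_n$. The point is that when one passes from level $n$ to level $n+1$, only the long intervals $f^j(I_n)$ get subdivided; for any atom $I\in\mathcal{P}_n\setminus\{I_n,I_{n+1}\}$ the pieces $J\subset I$ satisfy $d(c,J)\ge d(c,I)$, so their total contribution is at most $|I|/d(c,I)$, and the only genuinely new contribution to $S_{n+1}$ comes from the atoms of $\mathcal{P}_{n+1}$ lying inside $I_n\setminus I_{n+2}$, all of which sit at distance at least $|I_{n+2}|$ from $c$. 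One application of the real bounds then gives $|I_n\setminus I_{n+2}|/|I_{n+2}|\le K^2$.

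Your route bounds $S_n$ directly at each fixed level by a logarithmic telescoping, using the real bounds twice: once for the adjacency comparability $\ell_k\le K\ell_{k-1}$ that makes the telescoping work, and once for the decay $|I_n(c)|\ge K^{-n}\cdot\text{const}$ that turns $\log(|S^1|/|I_n(c)|)$ into $O(n)$. Both ingredients are legitimate consequences of Theorem~\ref{realbounds}. What the paper's approach buys is that the combinatorics of the refinement $\mathcal{P}_n\to\mathcal{P}_{n+1}$ packages both uses of the real bounds into a single step and yields the clean increment bound $S_{n+1}-S_n\le K^2$; what your approach buys is that it avoids that combinatorial description entirely and works purely from adjacency at a fixed level.
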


\begin{proof}[Proof of Lemma \ref{A.5}] Given a critical point $c$, let us write in this proof, for simplicity of notation,  $\mathcal{P}_k$, $I_k$ 
instead of $\mathcal{P}_k(c)$, $I_k(c)$ respectively, for each $k\in \mathbb{N}$. 
Note that the transition from $\mathcal{P}_n$ to $\mathcal{P}_{n+1}$ 
can be described in the following way: the interval $I_n=[c,f^{q_n}(c)]$ is subdivided by the points 
$f^{jq_{n+1}+q_n}(c)$ with $1 \leq j \leq a_{n+1}$ into $a_{n+1}+1$ subintervals. 
This sub-partition is spread by the iterates of $f$ to all $f^j(I_n)=f^j([c,f^{q_n}(c)])$ with $0 \leq j < q_{n+1}$. 
The other elements of the partition $\mathcal{P}_n$, namely the intervals $f^j(I_{n+1})$ with $0 \leq j < q_n$, remain unchanged.
On one hand, for any $I\in\mathcal{P}_n\setminus\{I_n,I_{n+1}\}$ we have:
$$\sum_{I \supset J\in\mathcal{P}_{n+1}}\frac{|J|}{d(c,J)}\leq\frac{1}{d(c,I)}\sum_{I \supset J\in\mathcal{P}_{n+1}}|J|=\frac{|I|}{d(c,I)}\,.$$
On the other hand:
$$\sum_{\mathcal{P}_{n+1} \ni J \subset I_n \setminus I_{n+2}}\frac{|J|}{d(c,J)}\leq\frac{1}{|I_{n+2}|}\sum_{\mathcal{P}_{n+1} \ni J \subset 
I_n \setminus I_{n+2}}|J|=\frac{|I_n \setminus I_{n+2}|}{|I_{n+2}|}\,.$$
This gives us:$$0 \leq S_{n+1}-S_n \leq \frac{|I_n \setminus I_{n+2}|}{|I_{n+2}|}\quad\mbox{for all $n \geq 1$.}$$
By the real bounds $\frac{|I_n \setminus I_{n+2}|}{|I_{n+2}|} \leq K^2$ for all $n \geq 1$ and we are done.
\end{proof}

\section{The integrability of $\log{Df}$}\label{integrability}

As before let $f$ be a $C^3$ critical circle map with finitely many non-flat critical points and with rotation number $\rho(f)$. Since we assume that $\rho(f)$ is irrational, $f$ admits a unique invariant Borel probability measure $\mu$. Moreover, by a theorem of Yoccoz \cite{yoccoz}, $f$ has no wandering intervals and therefore there exists a circle homeomorphism $h:S^1 \to S^1$ which is a topological conjugacy between $f$ and the rigid rotation by angle $\rho(f)$, that we denote by $R_{\rho(f)}$. More precisely, the following diagram commutes:
$$
\begin{CD}
(S^1,\mu)@>{f}>>(S^1,\mu)\\
@V{h}VV             @VV{h}V\\
{(S^1,\Leb)}@>>{R_{\rho(f)}}>{(S^1,\Leb)}
\end{CD}
$$
where $\Leb$ denotes the normalized Lebesgue measure in the unit circle (the Haar measure for the multiplicative group of 
complex numbers of modulus $1$). Therefore $\mu$ is just the push-forward of Lebesgue measure under $h^{-1}$, that is, 
$\mu(A)=\big(h^{-1}_{*}\Leb\big)(A)=\Leb\big(h(A)\big)$ for any Borel set $A$ in the unit circle (recall that the conjugacy $h$ 
is unique up to post-composition with rotations, so the measure $\mu$ is well-defined).

In this section we prove that $\log Df$ belongs to $L^1(\mu)$. As before, let us denote by $c_1,c_2,\ldots,c_N$ the critical 
points of $f$. 

Let $\varphi:S^1\to\R$ be given by $\varphi=|\log Df|$. 
For each $1\leq j\leq N$ and each $n \geq 1$, let $J_n(c_j)=I_n(c_j)\cup I_{n+1}(c_j)$. We define $E_n=\bigcup_{j=1}^{N} J_n(c_j)$ and consider 
$\varphi_n:S^1\to\R$ given by:$$\varphi_n=\chi_{S^1\setminus E_n}\cdot\varphi\,,$$that is, 
$\varphi_n=0$ on each $J_n(c_j)$ and $\varphi_n=\varphi$ on the complement of their union. We will use the following four facts:

\begin{enumerate}
\item[{\sl Fact 1.}] From the real bounds (Theorem \ref{realbounds}) there exists $0<\lambda<1$ such that $|I_k(c_j)|\geq \lambda^k$ for all 
$k\geq 1$ and each $1\leq j\leq N$. 

\item[{\sl Fact 2.}] As explained above, the measure $\mu$ is the pullback of the Lebesgue measure under any topological conjugacy between $f$ and the corresponding 
rigid rotation. In particular, for each $1\leq j\leq N$ and for all $k\geq 1$, we have $\mu(I_k(c_j))=|q_k\theta -p_k|$ and by Theorem \ref{convergents}: 
$$\frac{1}{q_k+q_{k+1}}<\mu(I_k(c_j))\leq\frac{1}{q_{k+1}}\quad\mbox{for all $k \geq 1$ and each $1\leq j\leq N$.}$$

\item[{\sl Fact 3.}] By combinatorics,  we have $\mu(I_k(c_j)\setminus I_{k+2}(c_j))= a_{k+1}\mu(I_{k+1}(c_j))$, for all $k\geq 0$ and 
for each $1\leq j\leq N$. 

\item[{\sl Fact 4.}] Since each $c_j$ is a  non-flat critical point, there exist $C_0>0$ and a neighbourhood $V_j$ of $c_j$ such that for all 
$x\in V_j$ we have:
\begin{equation}\label{eq1}
 \varphi(x)\;\leq\; C_0\log{\frac{1}{|x-c_j|}}\ .
\end{equation}
We may assume, of course, that the $V_j$'s are pairwise disjoint. 
\end{enumerate}

With all these facts at hand we are ready to prove the desired integrability result.

\begin{prop}\label{int} The function $\log{Df}$ is $\mu$-integrable, i.e., $\log Df\in L^1(\mu)$. 
\end{prop}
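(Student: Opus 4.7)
The plan is to bound $\varphi:=|\log Df|$ pointwise by a $\mu$-integrable function. Away from the critical points $\varphi$ is continuous and bounded, so the essential work is near each $c_j$. By Fact 4, this reduces to showing that $x\mapsto \log(1/|x-c_j|)$ is $\mu$-integrable on a neighbourhood of each $c_j$.

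First I would write $S^1=\Omega\sqcup\bigcup_{j=1}^N V_j$, where the $V_j$'s are the disjoint neighbourhoods furnished by Fact 4 and $\Omega$ is bounded away from every critical point. Over $\Omega$, $\varphi$ is bounded and its integral is finite since $\mu$ is a probability measure. Over each $V_j$, Fact 4 gives
\[
\int_{V_j}\varphi\,d\mu \;\leq\; C_0 \int_{V_j}\log\frac{1}{|x-c_j|}\,d\mu(x),
\]
and it suffices to bound the right-hand side.

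Next, I fix $n_0$ large enough so that $J_{n_0}(c_j)=I_{n_0}(c_j)\cup I_{n_0+1}(c_j)\subset V_j$ for every $j$; the complement $V_j\setminus J_{n_0}(c_j)$ is bounded away from $c_j$, hence contributes a finite amount. The key combinatorial observation is that, since $I_n(c_j)$ and $I_{n+1}(c_j)$ lie on opposite sides of $c_j$ while $I_n(c_j)$ and $I_{n+2}(c_j)$ lie on the same side and share the endpoint $c_j$, one has the disjoint annular decomposition
\[
J_{n_0}(c_j)\setminus\{c_j\} \;=\; \bigsqcup_{k\geq n_0} A_k(c_j), \qquad A_k(c_j):=I_k(c_j)\setminus I_{k+2}(c_j),
\]
where on each side of $c_j$ the union runs over indices $k$ of one fixed parity.

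Now on each annulus $A_k(c_j)$ we have $|x-c_j|\geq |I_{k+2}(c_j)|\geq \lambda^{k+2}$ by Fact 1, hence $\log(1/|x-c_j|)\leq (k+2)\log(1/\lambda)$; and by Fact 2, $\mu(A_k(c_j))\leq \mu(I_k(c_j))\leq 1/q_{k+1}$. Summing over the annuli yields
\[
\int_{J_{n_0}(c_j)}\log\frac{1}{|x-c_j|}\,d\mu \;\leq\; \log\frac{1}{\lambda}\sum_{k\geq n_0}\frac{k+2}{q_{k+1}},
\]
which is finite because $\{q_n\}$ grows at least exponentially. Summing over $j=1,\ldots,N$ and combining with the contribution from $\Omega$ proves that $\int \varphi\,d\mu<\infty$, i.e.\ $\log Df\in L^1(\mu)$.

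The only genuine subtlety — rather than an obstacle — is keeping track of the parity structure in the annular decomposition, since on each side of $c_j$ only the $I_k(c_j)$'s of one fixed parity are nested. Everything else is a routine synthesis of Facts 1--4, with the exponential growth of $\{q_n\}$ absorbing the linear factor $(k+2)$ coming from the non-flatness estimate.
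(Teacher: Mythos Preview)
Your proof is correct and follows essentially the same route as the paper: decompose a neighbourhood of each critical point into the annuli $A_k(c_j)=I_k(c_j)\setminus I_{k+2}(c_j)$, bound $\log(1/|x-c_j|)$ on each annulus by $(k+2)\log(1/\lambda)$ via Fact~1, bound the $\mu$-mass of each annulus by $1/q_{k+1}$, and sum. The only notable difference is that the paper frames this via the monotone sequence $\varphi_n=\chi_{S^1\setminus E_n}\cdot\varphi$ and the Monotone Convergence Theorem, and bounds $\mu(A_k)$ through Fact~3 and the convergent estimate $|q_{k+1}\theta-p_{k+1}|<1/(a_{k+1}q_{k+1})$; your direct bound $\mu(A_k)\leq\mu(I_k)\leq 1/q_{k+1}$ from Fact~2 alone is a small simplification that bypasses Fact~3 entirely.
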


\begin{proof}[Proof of Proposition \ref{int}] Note that the sequence $\{\varphi_n\}$ converges monotonically to $\varphi=|\log Df|$. 
Let $n_0$ be the smallest positive integer such that 
$J_{n_0}(c_j)\subseteq V_j$ for all $1\leq j\leq N$. We only look at values of $n$ greater than $n_0$. 
Then, since $\varphi_n$ is identically zero on $E_n$ and agrees with $\varphi$ everywhere else, we can write
\begin{equation}\label{eq2}
 \int_{S^1} \varphi_n\,d\mu \;=\; \int_{S^1\setminus E_{n_0}} \varphi\,d\mu + 
\sum_{j=1}^{N}\sum_{k=n_0}^{n-1} \int_{I_k(c_j)\setminus I_{k+2}(c_j)} \varphi\,d\mu
\end{equation}
The first integral on the right-hand side is a fixed number independent of $n$. Hence it suffices to bound the last double sum. Using \eqref{eq1} 
and the fact that in $I_k(c_j)\setminus I_{k+2}(c_j)$ the closest point to $c_j$ is $f^{q_{k+2}}(c_j)$, we see that (see Figure \ref{lyaplog})
\begin{equation}\label{eq3}
\sum_{k=n_0}^{n-1} \int_{I_k(c_j)\setminus I_{k+2}(c_j)} \varphi\,d\mu\; 
\leq C_0  \sum_{k=n_0}^{n-1} \mu(I_k(c_j)\setminus I_{k+2}(c_j))\log{\frac{1}{|I_{k+2}(c_j)|}}
\end{equation}
Applying facts 1, 2 and 3 to this last sum, we see that
\begin{equation}\label{eq4}
 \sum_{k=n_0}^{n-1} \mu(I_k(c_j)\setminus I_{k+2}(c_j))\log{\frac{1}{|I_{k+2}(c_j)|}}\;\leq\; C_1\sum_{k=n_0}^{n-1} (k+2)a_{k+1}|q_{k+1}\theta -p_{k+1}|
\end{equation}
However we know from Theorem \ref{convergents} that
\begin{equation}\label{eq5}
 |q_{k+1}\theta -p_{k+1}|\;\leq\;\frac{1}{q_{k+2}}\;=\;\frac{1}{a_{k+1}q_{k+1}+q_k}\;<\;\frac{1}{a_{k+1}q_{k+1}}
\end{equation}
Putting \eqref{eq5} into \eqref{eq4} we get
\begin{equation}\label{eq6}
 \sum_{k=n_0}^{n-1} \mu(I_k(c_j)\setminus I_{k+2}(c_j))\log{\frac{1}{|I_{k+2}(c_j)|}}\;\leq\; C_1\sum_{k=n_0}^{n-1} \frac{(k+2)}{q_{k+1}}\,.
\end{equation}
Since the $q_k$'s grow exponentially fast (at least as fast as the Fibonacci numbers), we have
\[
 \sum_{k=0}^{\infty} \frac{(k+2)}{q_{k+1}} \;<\;\infty \ .
\]
Hence the left-hand side of \eqref{eq6} is uniformly bounded. Taking this information back to \eqref{eq3} and then to \eqref{eq2}, we deduce that
there exists a constant $C_2>0$ such that
\[
 \int_{S^1} \varphi_n\,d\mu \;\leq C_2\quad\mbox{for all $n \geq 1$.}
\]
But then, by the Monotone Convergence Theorem, $\varphi$ is $\mu$-integrable, as desired.
\end{proof}

\begin{figure}[h]
\begin{center}~
\hbox to \hsize{\psfrag{c}[][][1]{$c$} \psfrag{S}[][][1]{$S^1$}
\psfrag{I}[][][1]{$I_{k-1}\!\!\!\setminus\!\! I_{k+1}$} \psfrag{II}[][][1]{$I_k\!\!\setminus\!\! I_{k+2}$}
\psfrag{J}[][][1]{$J_k$} \psfrag{JJ}[][][1]{$J_{k-1}$}
\psfrag{L}[][][1]{$\ \ \ \ \varphi=|\log{Df}|$} 
\hspace{1.0em} \includegraphics[width=4.0in]{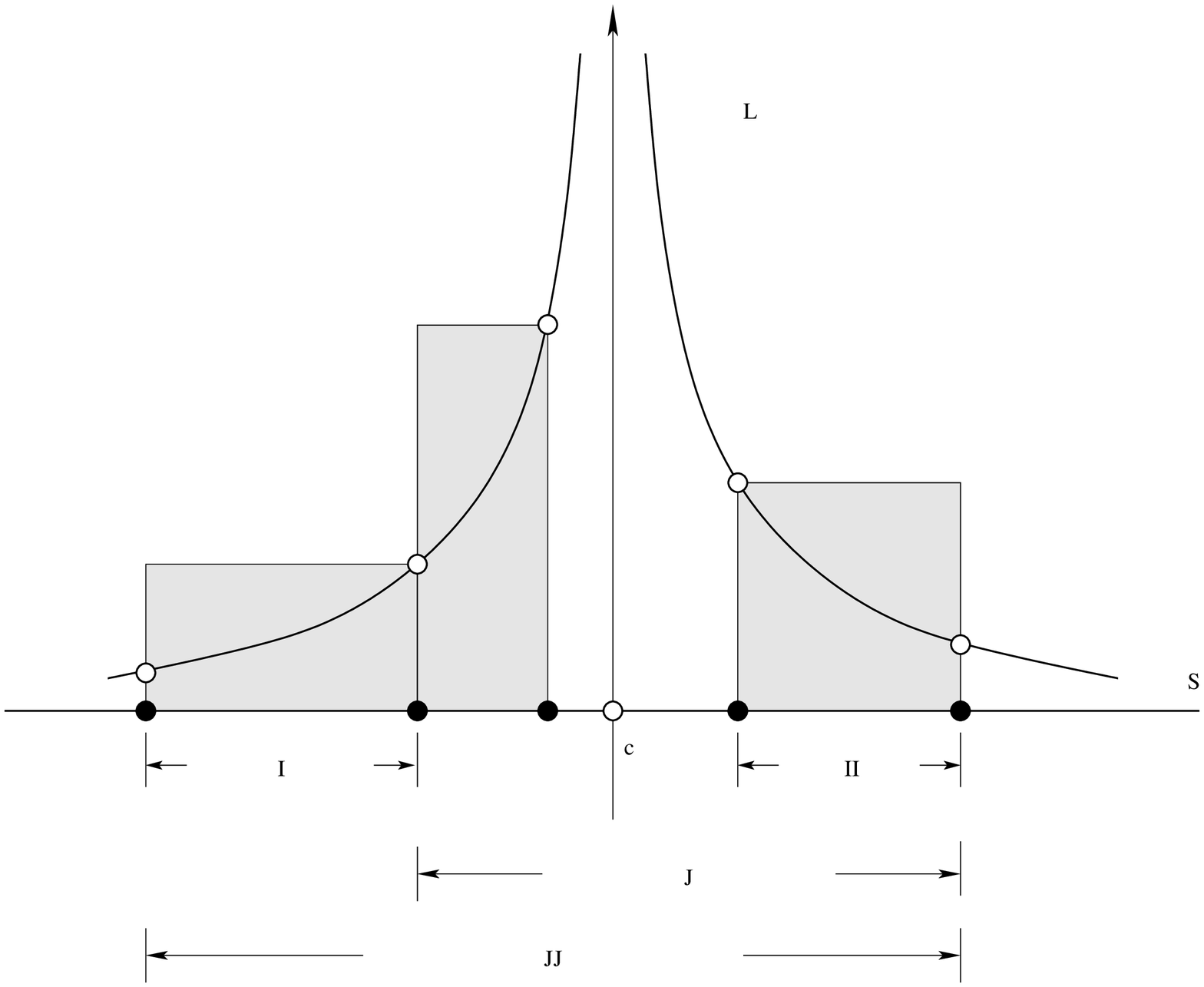}
   }
\end{center}
  \caption[slope]{\label{lyaplog} Bounding the integral of $\varphi =|\log{Df}|$ near a critical point $c$.}
\end{figure}

\begin{remark} The proof of Proposition \ref{int} yields, {\it mutatis mutandis\/}, a slightly stronger result, namely that $\log Df\in L^p(\mu)$ for every finite $p\geq 1$. 
However, this more general fact will not be needed in this paper.
\end{remark} 

\section{Proof of Theorem \ref{main}}\label{sec:proofofmain}

In this section we present two different proofs of Theorem \ref{main}. The first proof works only when the map $f$ has a single critical point, whereas the second works in the general multicritical case. 

\subsection{First proof: the unicritical case} 

Here we assume that $f$ has a single critical point $c$. In particular, we are free to use Lemma \ref{In}.
Once again we write $\mathcal{P}_k, I_k$, instead of $\mathcal{P}_k(c), I_k(c)$, etc., for simplicity of notation.  

Consider the Borel set $A \subset S^1$ defined in the following way: $x \in A$ iff there exists an increasing sequence 
$\{n_k\}_{k\in\nt}\subset\nt$ such that for each $k\in\nt$ there exists (a necessarily unique) $j_k\in\{0,1,...,q_{n_k+1}-1\}$ 
such that $f^{j_k}(x) \in I_{n_k} \setminus I_{n_k+2}$.

\begin{lemma}\label{Afull} The set $A$ is $f$-invariant and $\mu(A)=1$.
\end{lemma}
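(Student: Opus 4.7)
The plan is to rewrite $A$ as $\limsup_n R_n$ with
\[R_n \;=\; \bigcup_{j=0}^{q_{n+1}-1} f^{-j}\bigl(I_n(c)\setminus I_{n+2}(c)\bigr),\]
and then prove (i) $A$ is $f$-invariant modulo $\mu$-null sets and (ii) $\mu(A)>0$. Since $\mu$ is ergodic (by the unique ergodicity of $f$), (i) and (ii) together force $\mu(A)=1$.

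For (i), I would use a Borel--Cantelli argument at the level of the $R_n$'s. One checks directly that
\[R_n \triangle f^{-1}R_n \;\subseteq\; \bigl(I_n(c)\setminus I_{n+2}(c)\bigr)\,\cup\, f^{-q_{n+1}}\bigl(I_n(c)\setminus I_{n+2}(c)\bigr),\]
so by the $f$-invariance of $\mu$ and Fact~2, $\mu(R_n\triangle f^{-1}R_n) \leq 2\mu(I_n(c)) \leq 2/q_{n+1}$. Since $q_n$ grows at least exponentially (as noted in \S\ref{sec:realbounds}), $\sum_n\mu(R_n\triangle f^{-1}R_n)<\infty$, and Borel--Cantelli gives $\mu(\limsup_n(R_n\triangle f^{-1}R_n))=0$. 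Combining the identity $f^{-1}(\limsup R_n)=\limsup f^{-1}R_n$ with the set-theoretic containment $\limsup R_n\triangle\limsup f^{-1}R_n\subseteq\limsup(R_n\triangle f^{-1}R_n)$ then yields $\mu(A\triangle f^{-1}A)=0$.

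For (ii), the essential combinatorial ingredient is that the preimages $f^{-j}(I_n(c))$, $0\leq j\leq q_{n+1}-1$, are pairwise disjoint mod $\mu$: otherwise two distinct atoms of the dynamical partition $\mathcal{P}_n(c)$ of the form $f^i(I_n(c))$ would overlap, a contradiction. Hence $\mu(R_n)=q_{n+1}\bigl(\mu(I_n(c))-\mu(I_{n+2}(c))\bigr)$. The identity $q_{n+1}\mu(I_n(c))+q_n\mu(I_{n+1}(c))=1$, which follows from Fact~2 and the classical $p_{n+1}q_n-p_nq_{n+1}=(-1)^n$, together with the upper bound $\mu(I_k(c))\leq 1/q_{k+1}$, gives
\[\mu(R_n) \;\geq\; 1 - \frac{q_n}{q_{n+2}} - \frac{q_{n+1}}{q_{n+3}}.\]
A short case analysis on the partial quotients $a_{n+1},a_{n+2}$---either at least one of them is $\geq 2$, forcing the corresponding ratio to be $\leq 1/3$, or both equal $1$, in which case $q_{n+2}=q_{n+1}+q_n$ and $q_{n+3}=2q_{n+1}+q_n$ and a direct computation bounds the sum by $5/6$---shows $\mu(R_n)\geq 1/6$ for all large $n$. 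By the reverse Fatou lemma, $\mu(A)\geq\limsup_n\mu(R_n)>0$, and ergodicity upgrades this to $\mu(A)=1$. The main obstacle I anticipate is precisely this quantitative lower bound: since individually each of $q_n/q_{n+2}$ and $q_{n+1}/q_{n+3}$ can be as large as $1/2$, one cannot avoid exploiting the recursion $q_{k+1}=a_k q_k + q_{k-1}$ to rule out simultaneous near-maxima of the two ratios.
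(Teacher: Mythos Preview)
Your argument is correct and follows the same overall architecture as the paper: write $A=\limsup_n R_n$ (the paper calls these $A_n$), show each $R_n$ has $\mu$-measure bounded uniformly away from zero, and finish with ergodicity. Two aspects differ from the paper's version, and in both cases the paper's route is shorter.

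For invariance, the paper simply notes that $f(A)\subseteq A$ follows directly from the definition: if $f^{j_n}(x)\in I_n\setminus I_{n+2}$ with $0\le j_n\le q_{n+1}-1$ for infinitely many $n$, then $j_n=0$ can occur for at most one value of $n$ (the sets $I_n\setminus I_{n+2}$ are pairwise disjoint), so $f^{j_n-1}(f(x))\in I_n\setminus I_{n+2}$ for infinitely many $n$. Since $\mu$ is $f$-invariant, $f(A)\subseteq A$ already yields $\mu(A\triangle f^{-1}A)=0$, so your Borel--Cantelli detour, while perfectly valid, is not needed.

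For the lower bound on $\mu(R_n)$, the paper avoids your case analysis by using Fact~3, namely $\mu(I_n\setminus I_{n+2})=a_{n+1}\mu(I_{n+1})$. Together with the lower bound $\mu(I_{n+1})>1/(q_{n+1}+q_{n+2})$ from Theorem~\ref{convergents} and the trivial inequality $q_{n+2}<(a_{n+1}+1)q_{n+1}$, this gives
\[
\mu(R_n)\;=\;a_{n+1}q_{n+1}\,\mu(I_{n+1})\;>\;\frac{a_{n+1}q_{n+1}}{q_{n+1}+q_{n+2}}\;>\;\frac{a_{n+1}}{a_{n+1}+2}\;\ge\;\frac{1}{3},
\]
in one line. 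Your identity $q_{n+1}\mu(I_n)+q_n\mu(I_{n+1})=1$ is correct and your case analysis on $a_{n+1},a_{n+2}$ does produce the bound $1/6$, but expressing $\mu(I_n\setminus I_{n+2})$ directly as $a_{n+1}\mu(I_{n+1})$ eliminates the second ratio $q_{n+1}/q_{n+3}$ from the picture altogether and makes the ``simultaneous near-maxima'' obstacle you anticipated disappear.
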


\begin{proof}[Proof of Lemma \ref{Afull}] The first assertion follows immediately from the definition of $A$, 
hence we focus on proving that 
$A$ has full $\mu$-measure. For each $n \geq 1$ consider the disjoint union:
$$A_n=\bigcup_{i=0}^{q_{n+1}-1}f^{-i}(I_n \setminus I_{n+2})\,.$$
We claim that $\mu(A_n)>1/3$ for all $n \geq 1$. Indeed, $\mu(A_n)=q_{n+1}\,\mu(I_n \setminus I_{n+2})=a_{n+1}q_{n+1}\,\mu(I_{n+1})$, 
since $\mu(I_n \setminus I_{n+2})=a_{n+1}\,\mu(I_{n+1})$. As explained at the beginning of Section \ref{integrability}, the measure $\mu$ is the pullback of the Lebesgue measure under any topological 
conjugacy between $f$ and the corresponding rigid rotation. In particular:
$$\mu(I_{n+1})=q_{n+1}\left|\theta-\frac{p_{n+1}}{q_{n+1}}\right|\,,$$
where $\theta\in\R\setminus\Q$ is the rotation number of $f$. By Theorem \ref{convergents}:
$$\left|\theta-\frac{p_{n+1}}{q_{n+1}}\right|>\frac{1}{q_{n+1}(q_{n+2}+q_{n+1})}$$and then:$$\mu(A_n)>\frac{a_{n+1}q_{n+1}}{q_{n+2}+q_{n+1}}\,.$$
Since $q_{n+2}=a_{n+1}q_{n+1}+q_n<(a_{n+1}+1)q_{n+1}$ we deduce that 
$$\mu(A_n)>\frac{a_{n+1}}{a_{n+1}+2}\,.$$
Since $a_{n+1} \geq 1$ for all $n \geq 0$ we obtain the claim, that is, $\mu(A_n)>1/3$ for all $n \geq 1$. 
Moreover, since:$$A=\limsup_{n\in\nt}A_n=\bigcap_{k \geq 1}\bigcup_{n=k}^{+\infty}A_n\,,$$we have $\mu(A) \geq 1/3$. 
The ergodicity of $\mu$ under $f$ now implies that $\mu(A)=1$, since $A$ is $f$-invariant.
\end{proof}

Now we consider the Borel set $B \subset S^1$ given by $B=A\setminus\mathcal{O}_{f}^{-}(c)$, where $\mathcal{O}_{f}^{-}(c)$ denotes 
the pre-orbit of the critical point, that is:$$\mathcal{O}_{f}^{-}(c)=\big\{c,f^{-1}(c),...,f^{-n}(c),...\big\}\,.$$

\begin{prop}\label{Bfull} The set $B$ has full $\mu$-measure, i.e., $\mu(B)=1$; moreover, the critical value $f(c)$ belongs to $B$.
\end{prop}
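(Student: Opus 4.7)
The plan is to decompose the proposition into two independent assertions: that $\mu(B)=1$, and that the critical value $f(c)$ itself lies in $B$. The first will follow from Lemma \ref{Afull} combined with the fact that the backward orbit $\mathcal{O}_f^{-}(c)$ is $\mu$-negligible; the second requires exhibiting explicit witnessing indices for $f(c)\in A$ and then ruling out $f(c)\in\mathcal{O}_f^{-}(c)$.

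First I would show that $\mu(\mathcal{O}_f^{-}(c))=0$. Because $f$ is topologically conjugate to the rigid rotation $R_{\rho(f)}$ via the homeomorphism $h$, and $\mu=(h^{-1})_{\ast}\Leb$, the measure $\mu$ is atomless: for any point $x\in S^1$ we have $\mu(\{x\})=\Leb(\{h(x)\})=0$. Since $\mathcal{O}_f^{-}(c)=\{f^{-n}(c):n\geq 0\}$ is countable, it follows that $\mu(\mathcal{O}_f^{-}(c))=0$, and combining this with $\mu(A)=1$ from Lemma \ref{Afull} gives $\mu(B)=1$.

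Next I would verify that $f(c)\in B$. That $f(c)\notin\mathcal{O}_f^{-}(c)$ is immediate: any equality $f(c)=f^{-n}(c)$ with $n\geq 0$ would give $f^{n+1}(c)=c$ and make $c$ a periodic point, contradicting the irrationality of $\rho(f)$. For $f(c)\in A$, I would propose, for each $n\geq 1$, the witness $j_n=q_n-1$, which lies in $\{0,1,\ldots,q_{n+1}-1\}$ because $q_n\leq q_{n+1}$. Then $f^{j_n}(f(c))=f^{q_n}(c)$, the non-$c$ endpoint of $I_n=[c,f^{q_n}(c)]$, so it belongs to $I_n$. By the standard combinatorics of circle homeomorphisms with irrational rotation number, $I_{n+2}\subsetneq I_n$ (the point $f^{q_{n+2}}(c)$ lies strictly between $c$ and $f^{q_n}(c)$), and $f^{q_{n+2}}(c)\neq f^{q_n}(c)$ because $f$ has no periodic points; this forces $f^{q_n}(c)\notin I_{n+2}$. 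Taking $n_k=k$ and $j_k=q_k-1$ therefore exhibits the infinite sequence of witnesses required by the definition of $A$.

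I do not expect any genuine obstacle here: the whole argument rests on two clean observations, namely that $\mu$ is atomless as a topological pullback of Lebesgue measure, and that $f^{q_n}(c)$ itself is a canonical witness of membership in $I_n\setminus I_{n+2}$, sitting at the outer boundary of $I_n$ at every level $n$.
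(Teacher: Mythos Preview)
Your proof is correct and follows essentially the same approach as the paper: both use that $\mu$ is atomless to get $\mu(\mathcal{O}_f^{-}(c))=0$, and both note that irrationality of $\rho(f)$ rules out $f(c)\in\mathcal{O}_f^{-}(c)$. The only cosmetic difference is that the paper observes $c\in A$ and then invokes the $f$-invariance of $A$ from Lemma~\ref{Afull} to conclude $f(c)\in A$, whereas you directly exhibit the witnesses $j_k=q_k-1$ for $f(c)$; these amount to the same computation.
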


\begin{proof}[Proof of Proposition \ref{Bfull}] Since $\mu$ has no atoms, $\mu\big(\mathcal{O}_{f}^{-}(c)\big)=0$  
(recall that $\mu$ is $f$-invariant and $f$ has no periodic orbits). In particular $\mu(B)=1$. The critical point of $f$ belongs 
to $A$ by definition, and by invariance, so does its critical value. Since there are no periodic orbits, 
$f(c)\notin\mathcal{O}_{f}^{-}(c)$ and then $f(c) \in B$.
\end{proof}

The relation between $B$ and the $\mu$-integrability of $\log Df$ is given by the following:

\begin{prop}\label{Aisgood} Let $x \in B$ and let $\{n_k\}_{k\in\nt}$ be its corresponding increasing sequence of natural numbers. 
Then:$$\lim_{k\to+\infty}\left\{\frac{1}{q_{n_k+1}}\sum_{i=0}^{q_{n_k+1}-1}\log Df\big(f^i(x)\big)\right\}=0\,.$$
\end{prop}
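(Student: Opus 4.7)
The plan is to use a chain-rule identity to split $\log Df^{q_{n_k+1}}(x)$ into one piece controlled by Lemma \ref{In} and a telescoping piece that can be estimated via the real bounds and Lemma \ref{A.5}. Set $y_k := f^{j_k}(x) \in I_{n_k}(c)\setminus I_{n_k+2}(c)$. The two factorizations $f^{q_{n_k+1}+j_k} = f^{q_{n_k+1}}\circ f^{j_k} = f^{j_k}\circ f^{q_{n_k+1}}$, evaluated at $x$, yield via the chain rule and logarithms the identity
\[
 \log Df^{q_{n_k+1}}(x) \;=\; \log Df^{q_{n_k+1}}(y_k) \;+\; \log Df^{j_k}(x) \;-\; \log Df^{j_k}\bigl(f^{q_{n_k+1}}(x)\bigr).
\]
Since $y_k\in I_{n_k}(c)\setminus I_{n_k+2}(c)$, Lemma \ref{In} gives $|\log Df^{q_{n_k+1}}(y_k)|\le \log C$ uniformly in $k$, so this term becomes negligible after dividing by $q_{n_k+1}$.

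The remaining difference I would expand as a telescoping sum
\[
 \log Df^{j_k}(x)-\log Df^{j_k}\bigl(f^{q_{n_k+1}}(x)\bigr) \;=\; \sum_{i=0}^{j_k-1}\bigl[\log Df(z_i)-\log Df(w_i)\bigr],
\]
with $z_i = f^i(x)$ and $w_i = f^{q_{n_k+1}}(z_i)$. The combinatorics of dynamical partitions, together with the real bounds, imply that $w_i$ lies in the atom $A_i\in\mathcal{P}_{n_k}(c)$ containing $z_i$ or in one adjacent to it, so $|z_i-w_i|\le K|A_i|$. Because $c$ is non-flat of type $d$, a direct computation gives $|(\log Df)'(\xi)|\le C/|\xi-c|$ off $c$. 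When $z_i\notin I_{n_k}(c)\cup I_{n_k+1}(c)$ and $[z_i,w_i]$ does not contain $c$, the Mean Value Theorem---combined with real bounds ensuring every point of $[z_i,w_i]$ has distance $\gtrsim d(c,A_i)$ from $c$---yields
\[
 \bigl|\log Df(z_i)-\log Df(w_i)\bigr| \;\le\; C\,\frac{|A_i|}{d(c,A_i)}\,.
\]
The finitely many exceptional indices (those $i$ for which $z_i\in I_{n_k}(c)\cup I_{n_k+1}(c)$ or $[z_i,w_i]$ contains $c$) are $O(1)$ in number, and each contributes $O(1)$ via the explicit local form $f(t)=\phi(t)|\phi(t)|^{d-1}+f(c)$ near $c$.

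Since the orbit $\{z_i\}_{i=0}^{q_{n_k+1}-1}$ visits each atom of $\mathcal{P}_{n_k}(c)$ only a uniformly bounded number of times, summing the above estimate yields
\[
 \sum_{i=0}^{j_k-1}\bigl|\log Df(z_i)-\log Df(w_i)\bigr| \;\le\; C'\,S_{n_k}(c) + O(1) \;\le\; C''\,n_k,
\]
by Lemma \ref{A.5}. Because $\{q_n\}$ grows at least as fast as the Fibonacci sequence, $n_k/q_{n_k+1}\to 0$ as $k\to\infty$; dividing by $q_{n_k+1}$ therefore gives the desired limit zero. The main obstacle is the pointwise estimate $|\log Df(z_i)-\log Df(w_i)|\le C|A_i|/d(c,A_i)$ together with bounding the number and size of the exceptional indices: both tasks require careful bookkeeping of how the orbit of $x$ meets the dynamical partition $\mathcal{P}_{n_k}(c)$, but the real bounds and the non-flat normal form near $c$ make each step tractable; once these are established, Lemma \ref{A.5} and the exponential growth of the return times close the proof.
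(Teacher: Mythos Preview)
Your approach is essentially the paper's own: the same chain-rule identity
\[
\log Df^{q_{n_k+1}}(x)=\log Df^{q_{n_k+1}}(y_k)+\sum_{i=0}^{j_k-1}\bigl[\log Df(z_i)-\log Df(w_i)\bigr],
\]
the same use of Lemma~\ref{In} for the first term, the same pointwise estimate for the summands via non-flatness (the paper writes it as $|\log Df(z)-\log Df(w)|\le L|z-w|/\min(|z-c|,|w-c|)$ rather than via the MVT, but these are equivalent), and the same concluding step through Lemma~\ref{A.5} and the exponential growth of $q_n$.

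The one genuine weakness is your handling of the ``exceptional indices''. Your claim that each such term contributes $O(1)$ via the local normal form is not justified: if $z_i\in I_{n_k+1}(c)$ happened to lie very close to $c$, then $|\log Df(z_i)-\log Df(w_i)|\asymp (d-1)\bigl|\log(|z_i-c|/|w_i-c|)\bigr|$ would have no uniform bound. The paper avoids this entirely by asserting, as a combinatorial fact, that for every $i\in\{0,\dots,j_k-1\}$ \emph{neither} $z_i=f^i(x)$ \emph{nor} $w_i=f^{i+q_{n_k+1}}(x)$ lies in $I_{n_k}\cup I_{n_k+1}$; hence there are no exceptional indices at all and every summand is controlled by $|I|/d(c,I)$ for some atom $I\in\mathcal{P}_{n_k}\setminus\{I_{n_k},I_{n_k+1}\}$. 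You should replace the $O(1)$ hand-wave with this combinatorial verification (it uses that the first return time from $I_{n_k}$ to $I_{n_k}\cup I_{n_k+1}$ is exactly $q_{n_k+1}$), after which your sketch lines up with the paper's proof.
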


\begin{proof}[Proof of Proposition \ref{Aisgood}] Recall that, since $c$ is a non-flat critical point of $f$, 
there exists $L>0$ 
such that for any $x,y \in S^1\setminus\{c\}$ we have:
$$\big|\log Df(x)-\log Df(y)\big|\leq\frac{L|x-y|}{\min\big\{|x-c|,|y-c|\big\}}\,.$$

Let $x \in B$ and let $\{n_k\}_{k\in\nt}$ be its corresponding increasing sequence of natural numbers. 
Recall that for each $k\in\nt$ 
there exists (a necessarily unique) $j_k\in\{0,1,...,q_{n_k+1}-1\}$ such that 
$f^{j_k}(x) \in I_{n_k} \setminus I_{n_k+2}$. Then we have:
\begin{align*}
\frac{\big|\log Df^{q_{n_k+1}}(x)\big|}{q_{n_k+1}}&\leq\frac{\big|\log Df^{q_{n_k+1}}(x)-
\log Df^{q_{n_k+1}}\big(f^{j_k}(x)\big)\big|}{q_{n_k+1}}\notag\\
&+\frac{\big|\log Df^{q_{n_k+1}}\big(f^{j_k}(x)\big)\big|}{q_{n_k+1}}\notag\\
&\leq\frac{1}{q_{n_k+1}}\sum_{i=0}^{j_k-1}\big|\log Df\big(f^i(x)\big)-\log Df\big(f^{i+q_{n_k+1}}(x)\big)\big|+
\frac{\log C}{q_{n_k+1}}\notag\\
&\leq\frac{L}{q_{n_k+1}}\sum_{i=0}^{j_k-1}\frac{\big|f^i(x)-f^{i+q_{n_k+1}}(x)\big|}{\min\big\{|f^i(x)-c|,|f^{i+q_{n_k+1}}(x)-c|\big\}}+\frac{\log C}{q_{n_k+1}}\,,\notag
\end{align*}
where the second inequality is given by Lemma \ref{In}. By combinatorics we have the following facts:

\begin{enumerate}
\item The points $f^i(x)$ and $f^{i+q_{n_k+1}}(x)$ do not belong to $I_{n_k} \cup I_{n_k+1}$ for any $i\in\{0,1,...,j_k-1\}$.
\item For each $i\in\{0,1,...,j_k-1\}$ there exist $\Delta_i$ and $\Delta^*_i$, adjacent elements of the partition $\mathcal{P}_{n_k}$, 
such that the points $f^i(x)$ and $f^{i+q_{n_k+1}}(x)$ belong to $\Delta_i\cup\Delta^*_i$ (the possibility that $\Delta_i=\Delta^*_i$ is not excluded; 
if they are different, we may suppose that $d(c,\Delta_i) \leq d(c,\Delta^*_i)$). By the real bounds: 
$\big|f^i(x)-f^{i+q_{n_k+1}}(x)\big| \leq |\Delta_i|+|\Delta^*_i| \leq (1+K)|\Delta_i|$.
\item If $i \neq l$ in $\{0,1,...,j_k-1\}$ then $\Delta_i \neq \Delta_l$.
\end{enumerate}

Therefore, with the notation of Lemma \ref{A.5}, we have that:
$$\sum_{i=0}^{j_k-1}\frac{\big|f^i(x)-f^{i+q_{n_k+1}}(x)\big|}{\min\big\{|f^i(x)-c|,|f^{i+q_{n_k+1}}(x)-c|\big\}} 
\leq(1+K)\sum_{I\in\mathcal{P}_{n_k}\setminus\{I_{n_k},I_{n_k+1}\}}\frac{|I|}{d(c,I)}\,,$$
and then:
$$\frac{\big|\log Df^{q _{n_k+1}}(x)\big|}{q_{n_k+1}} \leq L(1+K)\left(\frac{S_{n_k}}{q_{n_k+1}}\right)+\frac{\log C}{q_{n_k+1}} 
\quad\mbox{for all $k\in\nt$.}$$
At this point, recall that the sequence $\{q_n\}_{n \geq 1}$ grows at least exponentially fast with $n$, whereas $S_n$ grows at most linearly with $n$, 
by Lemma \ref{A.5}. Hence both terms in the right-hand side of this last inequality go to zero as $k\to\infty$, and we are done. 
\end{proof}

With Proposition \ref{int}, Proposition \ref{Bfull} and Proposition \ref{Aisgood} at hand our main result -- in the {\it unicritical case\/} -- 
follows in a straightforward manner. 

\begin{proof}[Proof of Theorem \ref{main}] By Proposition \ref{int} we already know that $\log Df$ belongs to $L^1(\mu)$. 
Hence by Birkhoff's Ergodic Theorem we have 
$$\lim_{n \to +\infty}\left\{\frac{\log Df^n(x)}{n}\right\}=\int_{S^1}\log Df\,d\mu\,,$$
for $\mu$-almost every $x \in S^1$. Combining this fact with Proposition \ref{Bfull} and Proposition \ref{Aisgood} we obtain:$$\int_{S^1}\log Df\,d\mu=0\,.$$

Finally we have to prove that $f$ does not satisfy the Collet-Eckmann condition. Indeed, if there were constants $C>0$ and $\lambda>1$ 
such that $Df^n\big(f(c)\big) \geq C\lambda^n$ for all $n\in\nt$ we would have:
$$\liminf_{n \to +\infty}\left\{\frac{\log Df^n\big(f(c)\big)}{n}\right\}\geq\log\lambda>0\,,$$
but this is impossible, since by Proposition \ref{Bfull} we know that $f(c)$ belongs to $B$, and this implies by Proposition \ref{Aisgood} 
that there exists a subsequence of $\log Df^n\big(f(c)\big)/n$ converging to zero.
\end{proof}

\begin{question}\label{expcrit} Theorem \ref{main} suggests the question whether $\lim\big\{\log Df^n\big(f(c)\big)/n\big\}=0$. 
For this purpose it would be enough to prove that the limit exists (since $f(c)$ belongs to $B$), for instance by proving that the 
critical value of $f$ is a $\mu$-typical point for the Birkhoff's averages of $\log Df$. Note, however, that this fact does not follow 
directly from the unique ergodicity of $f$ since $\log Df$ is not a continuous function (it is defined only in $S^1\setminus\{c\}$, 
and it is unbounded, see Figure \ref{logcocycle} in the introduction).
\end{question}

\subsection{Second proof: the general multicritical case}\label{sec:newA}

Let us now give a proof of Theorem \ref{main} that works in general. Our proof relies on Proposition \ref{newlemma2.3} below, 
which can be regarded as a suitable replacement for Lemma \ref{In}. 

As before, let $\{q_n\}_{n\in\nt}$ be the sequence of return times given by the irrational 
rotation number of $f$ (see Section \ref{sec:realbounds}). Let us denote by $c_1,c_2,...,c_N$ the critical points of $f$ ($N \geq 1$) 
and let $d_i > 1$ denote the criticality of each $c_i$. Conjugating $f$ by a suitable $C^3$-diffeomorphism (which does not affect its Lyapunov exponent) we may assume that each $c_i$ has an open neighbourhood $V(c_i)$ where $f$ is a \emph{power-law} of the form:
\begin{equation}\label{newApowerlaw}
f(x)=f(c_i)+(x-c_i)|x-c_i|^{d_i-1}\quad\mbox{for all $x \in V(c_i)$.}
\end{equation}
We also assume, of course, that $V(c_i) \cap V(c_j)=\O$ whenever $i \neq j$.

Recall from the real bounds (Theorem \ref{realbounds}) that, for each $c\in\{c_1,c_2,...,c_N\}$, the dynamical partitions 
$\big\{\mathcal{P}_n(c)\big\}_{n\in\nt}$ have the \emph{comparability} property: any two consecutive atoms of $\mathcal{P}_n(c)$ 
have comparable lengths. We will also need the following three further consequences of the real bounds.

\begin{lemma}\label{newA:lema1} There exists $B_0=B_0(f)>1$ such that for each $c\in\{c_1,c_2,...,c_N\}$, for each $n\in\nt$ and 
for each atom $\Delta\in\mathcal{P}_n(c)$ we have:
$$\frac{|\Delta|}{B_0}\leq\big|f^{q_n}(\Delta)\big|\leq B_0|\Delta|\ .$$\qed
\end{lemma}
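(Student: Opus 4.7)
The plan is to prove Lemma \ref{newA:lema1} by case analysis on atoms $\Delta \in \mathcal{P}_n(c)$, applying the real bounds (Theorem \ref{realbounds}) to adjacent intervals in the dynamical partitions $\mathcal{P}_n(c)$ and its refinement $\mathcal{P}_{n+1}(c)$.

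Each atom of $\mathcal{P}_n(c)$ has the form $\Delta = f^k(J)$ with $J \in \{I_n(c), I_{n+1}(c)\}$, so $f^{q_n}(\Delta) = f^{k+q_n}(J)$. When the shifted exponent $k+q_n$ remains within the index range defining the atoms of $\mathcal{P}_n(c)$ or of $\mathcal{P}_{n+1}(c)$, the image is itself an atom of that partition, and one verifies combinatorially that it shares the orbit endpoint $f^{k+q_n}(c)$ with $\Delta$. Hence $\Delta$ and $f^{q_n}(\Delta)$ are adjacent atoms of the relevant partition, and Theorem \ref{realbounds} immediately gives $|f^{q_n}(\Delta)| \leq K_0|\Delta|$, handling the generic case. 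In particular this already disposes of the important atom $\Delta = I_n(c)$, since $f^{q_n}(I_n(c)) = [f^{q_n}(c), f^{2q_n}(c)]$ is the atom of $\mathcal{P}_n(c)$ that shares the endpoint $f^{q_n}(c)$ with $I_n(c)$.

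The delicate situation is the overflow case, epitomized by $\Delta = I_{n+1}(c)$, for which $f^{q_n}(I_{n+1}(c)) = [f^{q_n}(c), f^{q_n+q_{n+1}}(c)]$ lies inside $I_n(c)$ as the sub-interval of the refinement $\mathcal{P}_{n+1}(c)$ adjacent to the endpoint $f^{q_n}(c)$. Here I would compare $|f^{q_n}(I_{n+1}(c))|$ to $|I_{n+1}(c)|$ by combining three ingredients: (i) the uniform bound $|I_{n+1}(c)| \asymp |I_{n+2}(c)|$, since these are adjacent atoms of $\mathcal{P}_{n+1}(c)$ sharing the point $c$; (ii) the explicit power-law expression \eqref{newApowerlaw} of $f$ at $c$, applied to $f(I_{n+1}(c))$ to convert lengths near $c$ into lengths near $f(c)$; and (iii) a bounded-distortion estimate for $f^{q_n-1}$ acting on $f(I_{n+1}(c))$, adapted to the multicritical setting by invoking the power-law form \eqref{newApowerlaw} at each critical point that the relevant piece of orbit may meet, rather than the Koebe-type Corollary \ref{realbounds2} that is available only in the unicritical case.

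The main obstacle is getting $|f^{q_n}(I_{n+1}(c))| \asymp |I_{n+1}(c)|$ with a constant $B_0$ uniform in the continued-fraction coefficient $a_{n+1}$: inside $I_n(c)$ the atom $f^{q_n}(I_{n+1}(c))$ is separated from $I_{n+2}(c)$ by the $a_{n+1}-1$ intermediate sub-intervals $f^{q_n+jq_{n+1}}(I_{n+1}(c))$ of $\mathcal{P}_{n+1}(c)$, and a naive chain of pairwise real-bounds estimates would only yield the non-uniform constant $K_0^{a_{n+1}}$. The resolution should bypass this chain altogether by using the power-law form to convert the length comparison at $c$ into a distortion estimate on the already-unfolded interval $f(I_{n+1}(c))$, in the spirit of Lemma \ref{In}, but now handling each critical point encountered along the orbit by an individual power-law factor. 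Establishing this uniform comparison is, I expect, the technical heart of the proof and is precisely where the multicritical setting departs from the unicritical treatment.
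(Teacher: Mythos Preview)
The paper gives no proof of this lemma at all (note the \qed\ immediately after the statement); it simply declares the result a consequence of the real bounds, presumably in the broader sense that includes cross-ratio/Koebe distortion control, not merely the adjacency estimate of Theorem~\ref{realbounds}. So there is no ``paper's own proof'' to compare against, and your instinct that something beyond naive adjacency is required for the lower bound is well founded.

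That said, your case division is mis-stated. You claim that in the generic case $\Delta=f^k(J)$ and $f^{q_n}(\Delta)=f^{k+q_n}(J)$ share the endpoint $f^{k+q_n}(c)$. This is correct only for \emph{long} atoms, $J=I_n(c)$: then $\Delta$ has endpoints $f^k(c)$ and $f^{k+q_n}(c)$, and the latter is indeed shared with $f^{q_n}(\Delta)$. For \emph{short} atoms, $J=I_{n+1}(c)$, the endpoints of $\Delta$ are $f^k(c)$ and $f^{k+q_{n+1}}(c)$, neither of which equals $f^{k+q_n}(c)$; so your adjacency argument simply does not apply there, regardless of any ``overflow'' of the index $k+q_n$. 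In fact for every short atom $\Delta=f^k(I_{n+1}(c))$ one has $k+q_n\le q_{n+2}-1$, so $f^{q_n}(\Delta)$ is always a (long) atom of $\mathcal{P}_{n+1}(c)$ --- there is no overflow case to speak of. The correct dichotomy is long atoms (easy: endpoint sharing plus Theorem~\ref{realbounds}) versus short atoms (all of them, not just $I_{n+1}(c)$, require the extra work). Your diagnosis of the difficulty for short atoms --- that a chain of adjacency estimates across the $a_{n+1}$ sub-atoms of $f^k(I_n)$ yields only the non-uniform constant $K_0^{a_{n+1}}$ --- is exactly right, and your proposed remedy via the power-law form \eqref{newApowerlaw} together with a Koebe-type distortion estimate handling each critical encounter separately is a sound route to the uniform bound.
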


\begin{lemma}\label{newA:lema2} There exists $B_1=B_1(f)>1$ with the following property: let $\Delta\in\mathcal{P}_n(c)$ and denote 
by $\Delta^*$ the union of $\Delta$ with its two immediate neighbours in $\mathcal{P}_n(c)$. If $0 \leq j < k \leq q_n$ are such 
that the intervals $f^{j}(\Delta^*)$, $f^{j+1}(\Delta^*)$,..., $f^{k-1}(\Delta^*)$ do not contain any critical point of $f$, 
then the map $f^{k-j}:f^j(\Delta) \to f^k(\Delta)$ has distortion bounded by $B_1$, that is:
\begin{equation}\label{newA:BDlema2}
\frac{1}{B_1}\leq\frac{Df^{k-j}(x)}{Df^{k-j}(y)}\leq B_1\quad\mbox{for all $x,y \in f^j(\Delta)$.}
\end{equation}
\end{lemma}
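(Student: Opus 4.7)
\textbf{Proof plan for Lemma \ref{newA:lema2}.} This is a Koebe-type distortion estimate for orbits of the critical circle map $f$ that avoid the critical set. My plan is to apply the standard real-analytic Koebe distortion lemma for $C^3$ maps with non-flat critical points (which is available thanks to Świątek's cross-ratio inequality, as used already for Corollary \ref{realbounds2}) to the diffeomorphism $f^{k-j}\colon f^j(\Delta^*)\to f^k(\Delta^*)$. For this, I need to provide two inputs: (i) \emph{Koebe space}, that is, the interval $f^j(\Delta^*)$ must be a definite scaled neighbourhood of $f^j(\Delta)$; (ii) a bound on the total length $\sum_{i=j}^{k-1}\bigl|f^i(\Delta^*)\bigr|$ (or the corresponding sum of squares), so that the accumulated nonlinearity along the orbit is controlled. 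Both inputs will come from the real bounds (Theorem \ref{realbounds}) together with the combinatorial structure of the dynamical partition $\mathcal{P}_n(c)$.

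For the Koebe space, first I would verify that the two components of $\Delta^*\setminus\Delta$ are the two atoms of $\mathcal{P}_n(c)$ adjacent to $\Delta$, so by Theorem \ref{realbounds} each has length at least $K_0^{-1}|\Delta|$. The point is that this comparability is preserved throughout the orbit segment $j\leq i<k$: since $f$ is a homeomorphism and no critical point lies in $f^i(\Delta^*)$, the intervals $f^i(\Delta_-),\,f^i(\Delta),\,f^i(\Delta_+)$ remain three consecutive subintervals; and, because $k\leq q_n$, each of them is (or is contained in a bounded union of) atoms of $\mathcal{P}_n(c)$, so by Theorem \ref{realbounds} again the two outer components remain comparable to $f^i(\Delta)$. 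This gives a definite $\kappa$-scaled neighbourhood with $\kappa=\kappa(K_0)>0$ at \emph{every} intermediate time, which is exactly the hypothesis needed to invoke Koebe at time $j$.

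For the total length estimate, I would use the bounded-multiplicity principle for the dynamical partition. The interval $\Delta^*$ is a union of at most three atoms of $\mathcal{P}_n(c)$, and since $\mathcal{P}_n(c)$ is a partition of the circle whose atoms are precisely the forward orbits $f^i(I_n(c))$ ($0\le i<q_{n+1}$) and $f^i(I_{n+1}(c))$ ($0\le i<q_n$), each atom appears at most a uniformly bounded number of times in the family $\{f^i(\Delta^*)\}_{i=j}^{k-1}$ (because the orbit length $k-j$ is at most $q_n$). Consequently $\sum_{i=j}^{k-1}|f^i(\Delta^*)|\leq M$ for a constant $M$ depending only on $K_0$, and the same then holds for the sum of $|f^i(\Delta^*)|^2$ since each term is at most $O(1)$.

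The hardest step is likely to be the bookkeeping in the Koebe-space argument of the previous paragraph: one has to deal carefully with the edge cases where a neighbour $\Delta_\pm$ of $\Delta$ in $\mathcal{P}_n(c)$ has its forward iterate $f^i(\Delta_\pm)$ crossing the boundary between the ``$I_n$-branch'' and the ``$I_{n+1}$-branch'' of the dynamical partition, so that $f^i(\Delta_\pm)$ ceases to be a single atom of $\mathcal{P}_n(c)$ and must be re-expressed as a union of comparable atoms. Modulo this bookkeeping, the conclusion follows by a direct application of the standard Koebe distortion lemma for $C^3$ circle endomorphisms with non-flat critical points: the distortion of $f^{k-j}$ on $f^j(\Delta)$ is bounded by a constant $B_1=B_1(\kappa,M,f)$, proving \eqref{newA:BDlema2}.
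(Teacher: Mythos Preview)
Your approach is essentially the same as the paper's: both reduce the lemma to the standard Koebe distortion principle applied to the diffeomorphism $f^{k-j}\colon f^j(\Delta^*)\to f^k(\Delta^*)$, with the real bounds supplying the required space around $f^j(\Delta)$. The paper's proof is two sentences and simply invokes Koebe (citing \cite[Lemma~2.4]{edsonwelington1}); what you have written is a fleshed-out version of the same argument, making explicit the two hypotheses (space and bounded total length/multiplicity) that the paper leaves implicit.

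One simplification worth noting: you do not need to track Koebe space at \emph{every} intermediate time $j\le i<k$. The version of the Koebe principle being used here requires only (i) that $f^{k-j}$ is a diffeomorphism on the larger interval $f^j(\Delta^*)$, (ii) definite space for $f^j(\Delta)$ inside $f^j(\Delta^*)$ at a single end (source or target), and (iii) bounded intersection multiplicity of the orbit $\{f^i(\Delta^*)\}_{j\le i<k}$. Item (ii) follows directly from Theorem~\ref{realbounds}, and item (iii) is the multiplicity-$3$ fact that the paper records explicitly a little later (in the proof of Proposition~\ref{newlemma2.3}). This lets you bypass entirely the ``bookkeeping'' edge cases you flag about iterates of the neighbouring atoms straddling the two branches of $\mathcal{P}_n(c)$; those cases never need to be analysed.
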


\begin{proof}[Proof of Lemma \ref{newA:lema2}] The real bounds imply that $f^j(\Delta)$ has \emph{space} inside $f^j(\Delta^*)$. 
Moreover, the map $f^{k-j}:f^j(\Delta^*) \to f^k(\Delta^*)$ is a diffeomorphism, and hence \eqref{newA:BDlema2} follows 
from the standard Koebe distortion principle (see for instance \cite[Lemma 2.4, page 348]{edsonwelington1} and the references therein).
\end{proof}

\begin{lemma}\label{newA:lema3} There exists $B_2=B_2(f)>1$ with the following property: if $c \neq c'$ are critical points of $f$ and 
$\Delta\in\mathcal{P}_n(c)$, $\Delta'\in\mathcal{P}_n(c')$ for some $n\in\nt$ are such that $\Delta\cap\Delta'\neq\O$, 
then $B_2^{-1}|\Delta'|\leq|\Delta|\leq B_2|\Delta'|$.
\end{lemma}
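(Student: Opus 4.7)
The plan is to reduce the general comparability $|\Delta|\asymp|\Delta'|$ to a single special case, and then propagate via the dynamics. The special case is the following: for each critical point $c'\neq c$ and each $n$, the unique atom $\Delta_0\in\mathcal{P}_n(c)$ whose interior contains $c'$ satisfies $|\Delta_0|\asymp|I_n(c')|$, with constants depending only on $f$.

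To establish the special case, I would exploit that $c'\in\Delta_0$ forces $f^{q_n}(c')\in f^{q_n}(\Delta_0)$; by Lemma~\ref{newA:lema1} the image $f^{q_n}(\Delta_0)$ satisfies $|f^{q_n}(\Delta_0)|\asymp|\Delta_0|$. The interval $I_n(c')=[c',f^{q_n}(c')]$ thus joins a point of $\Delta_0$ to a point of $f^{q_n}(\Delta_0)$. An Ostrowski-type combinatorial analysis---reflecting the fact that in the conjugate rigid-rotation picture $f^{q_n}$ is a rotation by $|q_n\theta-p_n|$, which displaces by approximately one atom's worth---shows that $f^{q_n}(\Delta_0)$ lies at uniformly bounded circular distance from $\Delta_0$ in the order of $\mathcal{P}_n(c)$. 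Theorem~\ref{realbounds}, making chains of adjacent atoms mutually comparable, then gives $|I_n(c')|\asymp|\Delta_0|$ (sum over a bounded chain in one direction; $|\Delta_0|\lesssim|I_n(c')|$ in the other since $I_n(c')$ must already contain the boundary point separating $\Delta_0$ from $f^{q_n}(\Delta_0)$).

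To deduce the general statement, given $\Delta\in\mathcal{P}_n(c)$ and $\Delta'\in\mathcal{P}_n(c')$ with $\Delta\cap\Delta'\neq\emptyset$, I would transport the special case along the orbit. Writing $\Delta'=f^{j}(I_n(c'))$ (the $I_{n+1}(c')$ case being analogous) and letting $\Delta_0^*$ denote the union of $\Delta_0$ with its two $\mathcal{P}_n(c)$-neighbours (which, by the real bounds, gives $I_n(c')$ definite space inside $\Delta_0^*$), the iterate $f^{j}:\Delta_0^*\to f^{j}(\Delta_0^*)$ has bounded distortion by Lemma~\ref{newA:lema2}, applied piecewise on safe subsegments where the orbit avoids critical points of $f$ and re-glued by means of Lemma~\ref{newA:lema1} and Theorem~\ref{realbounds} whenever another critical point is traversed. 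This gives $|\Delta'|=|f^{j}(I_n(c'))|$ comparable to $|f^{j}(\Delta_0)|$, and since $f^{j}(\Delta_0)\in\mathcal{P}_n(c)$ contains the orbit point $f^{j}(c')\in\overline{\Delta'}$---which lies at bounded combinatorial distance from any $\mathcal{P}_n(c)$-atom intersecting $\Delta'$---one last application of Theorem~\ref{realbounds} yields $|\Delta|\asymp|f^{j}(\Delta_0)|\asymp|\Delta'|$.

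The main obstacle is the Ostrowski-type combinatorial count underlying the special case: the claim that $f^{q_n}$ shifts $\Delta_0$ by a uniformly bounded number of atoms of $\mathcal{P}_n(c)$. Though heuristically clear from the rigid-rotation picture, making it rigorous requires a careful case analysis of where the iterates of $c'$ sit inside the combinatorial pattern of $\mathcal{P}_n(c)$, and uses the continued-fraction structure of $\rho(f)$ in an essential way. A secondary technical point is the piecewise bookkeeping of distortion along an orbit segment that passes through critical points of $f$ other than the one defining the partition, which is where the interplay between Lemmas~\ref{newA:lema1} and~\ref{newA:lema2} becomes delicate.
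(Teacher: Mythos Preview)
Your plan is far more elaborate than needed, and the obstacles you flag are symptoms of having taken the wrong route. The paper's proof is a single combinatorial observation: since both partitions $\mathcal{P}_n(c)$ and $\mathcal{P}_n(c')$ are cut out by orbit segments of the \emph{same} length $q_n+q_{n+1}$ under the \emph{same} map, one checks (in the conjugate rigid-rotation picture, where atoms come in just two lengths $\|q_n\theta\|$ and $\|q_{n+1}\theta\|$, and short atoms are never adjacent) that any atom $\Delta\in\mathcal{P}_n(c)$ is contained in the union of a bounded number of consecutive atoms of $\mathcal{P}_n(c')$ --- the paper states two, in any case at most three --- one of which is $\Delta'$ or its neighbour. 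Theorem~\ref{realbounds} then gives $|\Delta|\leq(1+2K_0)|\Delta'|$ directly, and symmetry gives the reverse inequality. No special case, no Ostrowski count, no dynamical transport, and no distortion bookkeeping is required.

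Your scheme, by contrast, has two concrete problems beyond mere length. First, the transport step invokes Lemma~\ref{newA:lema2} for $f^{j}$ with $j$ ranging up to $q_{n+1}-1$ (when $\Delta'=f^{j}(I_n(c'))$), whereas that lemma is stated only for iterates up to $q_n$; extending it would require additional work. Second, and more seriously, your ``piecewise re-gluing'' across critical passages would need comparability of atoms from different partitions at those passage points --- which is exactly the content of the lemma you are trying to prove, so the argument threatens to be circular. The direct containment argument sidesteps both issues entirely.
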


\begin{proof}[Proof of Lemma \ref{newA:lema3}] This follows from the combinatorial fact that $\Delta$ is contained in the union of 
two adjacent atoms of $\mathcal{P}_n(c')$, one of which is $\Delta'$, and likewise for $\Delta'$.
\end{proof}

For each $k \geq 0$ and each critical point $c$ we will use the notation 
$J_k(c)=I_k(c)\cup I_{k+1}(c)=\big[f^{q_{k+1}}(c),f^{q_{k}}(c)\big] \ni c$. 
The key to prove Theorem \ref{main} is the following fact.

\begin{prop}\label{newlemma2.3} There exists $C=C(f)>0$ with the following properties:
\begin{enumerate}
\item\label{propa} For each $x \in S^1$ and all $n \geq 0$ we have $\log Df^{q_n}(x) \leq C$.
\item\label{propb} For all $n \geq 0$, if $x \in S^1$ is such that $f^i(x)\notin\bigcup_{j=1}^{j=N}J_{2n}(c_j)$ for all $0 \leq i \leq q_n$, 
then $\log Df^{q_n}(x) \geq -Cn$.
\end{enumerate}
\end{prop}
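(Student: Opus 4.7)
The plan is to control $f^{q_n}$ via the dynamical partition $\mathcal{P}_n(c)$ of a judiciously chosen critical point $c$, combined with the mean value theorem, Lemmas \ref{newA:lema1}--\ref{newA:lema3}, and Koebe-type distortion estimates.

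For part (a), I fix $x\in S^1$ and let $\Delta$ be the atom of $\mathcal{P}_n(c)$ containing $x$, with $\Delta^*$ denoting $\Delta$ together with its two immediate neighbours in $\mathcal{P}_n(c)$. Lemma \ref{newA:lema1} gives $|f^{q_n}(\Delta)|\leq B_0|\Delta|$, so the mean value theorem produces some $\xi\in\Delta$ with $Df^{q_n}(\xi)\leq B_0$. To propagate this bound to the arbitrary point $x$, it suffices to bound the distortion of $f^{q_n}$ on $\Delta$. I would partition the time window $\{0,1,\ldots,q_n-1\}$ at those indices $i$ where $f^i(\Delta^*)$ meets another critical point $c_k$ in its interior. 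An equidistribution count over pre-images of each $c_k$ inside $\Delta^*$ (an interval of measure $\asymp 1/q_n$) shows that the number of such indices is bounded by a constant depending only on $N$. On each sub-window between consecutive encounters, Lemma \ref{newA:lema2} gives distortion at most $B_1$, while at each encounter Lemma \ref{newA:lema3} allows switching from $\mathcal{P}_n(c)$ to $\mathcal{P}_n(c_k)$ with only a multiplicative loss of $B_2$. The resulting distortion bound is independent of $n$, yielding $Df^{q_n}(x)\leq C$.

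For part (b), the plan is similar in spirit but now with a weaker, $n$-dependent distortion bound. By Lemma \ref{newA:lema1}, there exists $\xi\in\Delta$ with $Df^{q_n}(\xi)\geq 1/B_0$, so it suffices to bound the logarithmic distortion of $f^{q_n}$ on $\Delta$ by $Cn$. Using the standard Lipschitz-type estimate for $\log Df$ near a non-flat critical point, this logarithmic distortion is bounded by
\[
L\sum_{i=0}^{q_n-1} \frac{|f^i(\Delta^*)|}{\min\{|f^i(x)-c_j|,|f^i(y)-c_j|\}},
\]
where for each $i$ the critical point $c_j$ is chosen to be nearest to $f^i(\Delta^*)$. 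For those indices where $f^i(\Delta^*)$ lies within an atom of $\mathcal{P}_n(c_j)\setminus\{I_n(c_j),I_{n+1}(c_j)\}$, the denominator is $\geq d(c_j,f^i(\Delta^*))$, and Lemma \ref{newA:lema3} allows comparison of these with atoms of $\mathcal{P}_n(c_j)$; summing over such indices, Lemma \ref{A.5} bounds the contribution by $O(S_n(c_j))\leq O(n)$. For the few indices where $f^i(\Delta^*)$ meets $I_n(c_j)\cup I_{n+1}(c_j)$, the avoidance hypothesis $f^i(x)\notin J_{2n}(c_j)$ yields the denominator $\gtrsim \lambda^{2n}$ by Fact~1 of Section \ref{integrability}, while the combinatorics of the dynamical partitions bound the number of such indices (and their total numerator contribution) by $O(n)$ as well. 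Adding the two contributions gives the desired $-Cn$ bound.

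The principal obstacle is the bookkeeping in part (b): one must carefully match each iterate $f^i(\Delta^*)$ to the correct atomic level relative to the right critical point $c_j$, and then ensure that the summation really collapses into the sum $S_n(c_j)$ of Lemma \ref{A.5} (up to the $O(n)$ contribution from near-critical encounters). I expect this matching to proceed along the same lines as the estimate developed in the proof of Proposition \ref{Aisgood}, now adapted to the multicritical setting via Lemma \ref{newA:lema3}.
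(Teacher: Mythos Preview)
Your approach via ``mean value theorem plus global distortion bound'' does not work as stated, because the distortion of $f^{q_n}$ on an atom $\Delta$ of $\mathcal{P}_n(c)$ is genuinely unbounded. At each critical time $k_j$ (an index with $f^{k_j}(\Delta^*)$ containing some critical point $\beta_j$), the single step $f:f^{k_j}(\Delta)\to f^{k_j+1}(\Delta)$ has infinite distortion: $Df(f^{k_j}(\xi))$ may vanish while $Df(f^{k_j}(x))$ need not. In part~(a) this destroys the upper distortion ratio you need; in part~(b) it is worse still, since the avoidance hypothesis constrains only the orbit of $x$, not that of the mean-value point $\xi$, so the denominator $\min\{|f^i(x)-c_j|,|f^i(\xi)-c_j|\}$ can be arbitrarily small. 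Even ignoring $\xi$ and using only $|f^{k_j}(x)-\beta_j|\gtrsim |I_{2n+1}(\beta_j)|$, the resulting term $|f^{k_j}(\Delta^*)|/|f^{k_j}(x)-\beta_j|\asymp |I_n(\beta_j)|/|I_{2n+1}(\beta_j)|$ is exponentially large in $n$, not $O(n)$; so your Lemma~\ref{A.5}-style bookkeeping cannot yield the claimed $O(n)$ bound on the log-distortion. ``Switching partitions'' via Lemma~\ref{newA:lema3} only compares atom sizes; it does nothing to tame the singular step.

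The paper's argument bypasses distortion altogether. It writes $Df^{q_n}(x)$ via the chain rule as a product over the $m\leq 3N$ critical times: each diffeomorphic piece contributes $\asymp |f^{k_{j+1}}(\Delta)|/|f^{k_j+1}(\Delta)|$ by Lemma~\ref{newA:lema2}, while each critical step is estimated pointwise by the power law, namely $Df(f^{k_j}(x))\leq C_0|f^{k_j}(\Delta)|^{d_j-1}$ together with $|f^{k_j+1}(\Delta)|\asymp |f^{k_j}(\Delta)|^{d_j}$. The product then \emph{telescopes} to $|f^{q_n}(\Delta)|/|\Delta|\leq B_0$, giving part~(a). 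For part~(b), the avoidance hypothesis combined with Lemma~\ref{newA:lema3} yields the matching lower bound $Df(f^{k_j}(x))\geq C_5\,\lambda^{n(d_j-1)}|f^{k_j}(\Delta)|^{d_j-1}$, and the same telescoping leaves only a net factor $\lambda^{n\alpha}$. The key idea you are missing is precisely this telescoping through the power-law relation $|f^{k_j+1}(\Delta)|\asymp |f^{k_j}(\Delta)|^{d_j}$; it is what replaces the unavailable distortion control across the critical steps.
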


In what follows we denote by $C_0$, $C_1$, $C_2$, $C_3$,... positive constants (greater than $1$, in fact) depending only on $f$. 
Moreover, for any two positive numbers $a$ and $b$ we use the notation $a \asymp b$ to mean that $C^{-1}a \leq b \leq Ca$ for some 
constant $C>1$ depending only on $f$.

\begin{proof}[Proof of Proposition \ref{newlemma2.3}] Let us fix once and for all a critical point $c\in\{c_1,c_2,...,c_N\}$. 
We assume that $n \geq 0$ is large enough so that each atom of $\mathcal{P}_n(c)$ contains at most one critical point of $f$. 
Let $x \in S^1$ and let $\Delta\in\mathcal{P}_n(c)$ be such that $x\in\Delta$. Let $\Delta^*\supseteq\Delta$ be as in Lemma \ref{newA:lema2}. 
Just by taking $n$ larger still, we may assume that, for $0 \leq k < q_n$, each $f^k(\Delta^*)$ contains at most one critical point of $f$. 
We say that $0 \leq k < q_n$ is a \emph{critical time} for $x$ if $f^k(\Delta^*)$ contains a critical point of $f$. 
Let us write $0 \leq k_1<k_2<...<k_m < q_n$ for the sequence of all critical times for $x$. Note that $m \leq 3N$ 
since the family $\big\{f^k(\Delta^*)\big\}_{0 \leq k < q_n}$ has intersection multiplicity equal to $3$. 
Using these critical times and the chain rule we can write:
\begin{align}\label{cadeia}
Df^{q_n}(x)=
Df^{k_1}(x)\!&\left[\prod_{j=1}^{m-1}Df^{k_{j+1}-k_j-1}\big(f^{k_j+1}(x)\big)Df\big(f^{k_j}(x)\big)\right] \\
 & \times Df^{q_n-k_m-1}\big(f^{k_m+1}(x)\big)Df\big(f^{k_m}(x)\big).\notag
\end{align}

We proceed to estimate each term in the product \eqref{cadeia} above. From Lemma \ref{newA:lema2} (with $j=0$ and $k=k_1$) we have:
\begin{equation}\label{newAprop1}
Df^{k_1}(x)\asymp\frac{\big|f^{k_1}(\Delta)\big|}{|\Delta|}\,.
\end{equation}

Again from Lemma \ref{newA:lema2} (with $k_j+1$ and $k_{j+1}$ replacing $j$ and $k$ respectively) we have for all $j\in\{1,...,m-1\}$:
\begin{equation}\label{newAprop2}
Df^{k_{j+1}-k_j-1}\big(f^{k_j+1}(x)\big)\asymp\frac{\big|f^{k_{j+1}}(\Delta)\big|}{\big|f^{k_{j}+1}(\Delta)\big|}\,.
\end{equation}

For each $j\in\{1,...,m\}$ let $\beta_j\in\{c_1,c_2,...,c_N\}$ be the (unique) critical point of $f$ in $f^{k_j}(\Delta^*)$, and let $d_j$ be its criticality. Since we are assuming that $n$ is sufficiently large, we may suppose that $f^{k_j}(\Delta^*) \subseteq V(\beta_j)$ for all $j\in\{1,...,m\}$. Then, from the power-law expression \eqref{newApowerlaw} we have:
\begin{equation}\label{newAprop3}
Df\big(f^{k_j}(x)\big)\asymp\big|f^{k_j}(x)-\beta_j\big|^{d_j-1}\!,
\end{equation}
and recall that $d_j-1>1$ for all $j\in\{1,...,m\}$. Still using the power-law expression we see that:
\begin{equation}\label{newAprop4}
\big|f^{k_j+1}(\Delta)\big|\asymp\big|f^{k_j}(\Delta)\big|^{d_j}\quad\mbox{for all $j\in\{1,...,m\}$.}
\end{equation}

Using Lemma \ref{newA:lema2} yet again, we also see that:
\begin{equation}\label{newAprop5}
Df^{q_n-k_m-1}\big(f^{k_m+1}(x)\big)\asymp\frac{\big|f^{q_n}(\Delta)\big|}{\big|f^{k_m+1}(\Delta)\big|}\,.
\end{equation}

Let us now prove item \eqref{propa} of the conclusion of Proposition \ref{newlemma2.3}. Note that \eqref{newAprop3} yields:
\begin{equation}\label{newAprop8}
Df\big(f^{k_j}(x)\big) \leq C_0\big|f^{k_j}(\Delta)\big|^{d_j-1}\quad\mbox{for all $j\in\{1,...,m\}$,}
\end{equation}
where $C_0=C_0(f)>0$. Combining all these facts, namely \eqref{newAprop1}-\eqref{newAprop8}, we deduce the following (upper) telescoping estimate:
\begin{align}\label{tel}
Df^{q_n}(x)&\leq C_1\frac{\big|f^{k_1}(\Delta)\big|}{|\Delta|}\left[\prod_{j=1}^{m-1}\frac{\big|f^{k_{j+1}}(\Delta)\big|}{\big|f^{k_{j}+1}(\Delta)\big|}\big|f^{k_j}(\Delta)\big|^{d_j-1}\right]\big|f^{k_m}(\Delta)\big|^{d_m-1}\frac{\big|f^{q_n}(\Delta)\big|}{\big|f^{k_m+1}(\Delta)\big|}\notag\\
&\asymp\frac{\big|f^{k_1}(\Delta)\big|}{|\Delta|}\left[\prod_{j=1}^{m-1}\frac{\big|f^{k_{j+1}}(\Delta)\big|}{\big|f^{k_j}(\Delta)\big|}\right]\frac{\big|f^{q_n}(\Delta)\big|}{\big|f^{k_m}(\Delta)\big|}=\frac{\big|f^{q_n}(\Delta)\big|}{|\Delta|}\leq C_2\,,
\end{align}
where in the last line we have used \eqref{newAprop4} and finally Lemma \ref{newA:lema1}. This proves item \eqref{propa}. In order to prove item \eqref{propb} note first that all estimates provided above are two-sided, except \eqref{newAprop8}. In order to get a lower bound for the left side of \eqref{newAprop8} we use the hypothesis in \eqref{propb}. Since $f^{k_j}(x) \notin J_{2n}(\beta_j)$ we have:
\begin{equation}\label{newAprop10}
\big|f^{k_j}(x)-\beta_j\big| \geq C_3\big|I_{2n}(\beta_j)\big|.
\end{equation}

From the real bounds we know that there exists $\lambda\in(0,1)$ depending only on $f$ such that $C_4^{-1}\lambda^n\big|I_{n}(\beta_j)\big|\leq\big|I_{2n}(\beta_j)\big|\leq C_4\lambda^n\big|I_{n}(\beta_j)\big|$. Moreover, we claim that $\big|I_{n}(\beta_j)\big|$ is comparable to $\big|f^{k_j}(\Delta)\big|$. Indeed, this follows from Lemma \ref{newA:lema3} because $I_n(\beta_j)\in\mathcal{P}_n(\beta_j)$ intersects an atom of $\mathcal{P}_n(c)$ in $f^{k_j}(\Delta^*)$, and this atom has length comparable to $\big|f^{k_j}(\Delta)\big|$ (such atom is either $f^{k_j}(\Delta)$ itself, or one of its neighbours). Using these facts in \eqref{newAprop10} we deduce that:
\begin{equation}\label{newAprop11}
Df\big(f^{k_j}(x)\big) \geq C_5\,\lambda^{n(d_j-1)}\big|f^{k_j}(\Delta)\big|^{d_j-1}\!.
\end{equation}

Using this lower estimate in place of the upper estimate \eqref{newAprop8} and proceeding as in \eqref{tel} we arrive at the estimate
\begin{equation}
Df^{q_n}(x) \geq C_6\,\lambda^{n(d_1+d_2+...+d_m-m)}\,,
\end{equation}
where again $C_6=C_6(f)>1$. Note that $0<d_1+d_2+...+d_m-m<3(d_1+d_2+...+d_N)$, and since $\alpha=3(d_1+d_2+...+d_N)$ is a positive constant 
depending only on $f$ we get:$$Df^{q_n}(x) \geq C_6\,\lambda^{n\alpha}\,,$$and then:$$\log Df^{q_n}(x) \geq -n\alpha\log\frac{1}{\lambda}+\log C_6 \geq -C_7\,n\,.$$
\end{proof}

With Proposition \ref{newlemma2.3} at hand we are ready to prove Theorem \ref{main} in the general multicritical case.

\begin{proof}[Proof of Theorem \ref{main}] The fact that no critical point of $f$ satisfies the Collet-Eckmann condition follows at once from item \eqref{propa} 
of Proposition \ref{newlemma2.3}. By Proposition \ref{int} we know that $\log Df \in L^1(\mu)$, and then we know from Birkhoff's ergodic 
theorem that:$$\lim_{n \to +\infty}\left\{\frac{\log Df^n(x)}{n}\right\}=\int_{S^1}\log Df\,d\mu\,,$$
for $\mu$-almost every $x \in S^1$. For each $n \geq 0$ let:
\begin{align*}
A_n&=S^1\!\setminus\!\bigcup_{j=1}^{j=N}\bigcup_{i=0}^{q_n-1}f^{-i}\big(J_{2n}(c_j)\big)\\
&=\left\{x \in S^1:\forall\,\,\, 0 \leq i \leq q_n-1:f^i(x) \in S^1\!\setminus\!\bigcup_{j=1}^{j=N}J_{2n}(c_j)\right\},
\end{align*}
and consider$$A=\limsup_{n\in\nt}A_n=\bigcap_{k=1}^{+\infty}\bigcup_{n=k}^{+\infty}A_n\,.$$

We claim that $A$ has full $\mu$-measure. Indeed, since:$$\mu\big(J_{2n}(c_j)\big)=\mu\big(I_{2n}(c_j)\big)+\mu\big(I_{2n+1}(c_j)\big)\leq\frac{1}{q_{2n+1}}+\frac{1}{q_{2n+2}}\,,$$we deduce that $q_n\mu\big(J_{2n}(c_j)\big) \to 0$ (exponentially fast in $n$, in fact) and since $\mu(A_n) \geq 1-Nq_n\mu\big(J_{2n}(c_j)\big)$ we see that $\mu(A_n) \to 1$ as $n\to+\infty$. This implies the claim that $\mu(A)=1$. Now for each $x \in A$ we have from Proposition \ref{newlemma2.3} that there exists a sequence $n_k\to+\infty$ such that:$$\frac{-Cn_k}{q_{n_k}}\leq\frac{\log Df^{q_{n_k}}(x)}{q_{n_k}}\leq\frac{C}{q_{n_k}}\,,$$and letting $k\to+\infty$ we get that:$$\lim_{k\to+\infty}\frac{\log Df^{q_{n_k}}(x)}{q_{n_k}}=0\,.$$

Therefore:$$\lim_{n \to +\infty}\left\{\frac{\log Df^n(x)}{n}\right\}=0$$for $\mu$-almost every $x \in A$, 
and then we are done since $A$ has full $\mu$-measure.
\end{proof}

\begin{remark}
 Note that for maps with two or more critical points the analogue of Question \ref{expcrit} has, in general, a negative answer. 
Indeed, it may well happen that one of the critical points, say $c_1$, lies in the pre-orbit of one of the other critical points. In that case, 
the Lyapunov exponent of $f(c_1)$ will be equal to $-\infty$. 
\end{remark}

\section{Analogous results for unimodal maps}\label{sec:unimodal}

The main result of this paper, Theorem \ref{main}, has an analogue in the context of {\it infinitely renormalizable unimodal 
maps\/}. We wish to state the result (see Theorem \ref{unbounded} below) and this will involve a slight digression into the 
renormalization theory of unimodal maps. We refer the reader to \cite[Chapter VI]{demelovanstrien} for general background on 
the subject. 

Let $I_0=[-1,1]\subset \mathbb{R}$, and consider $C^3$ maps $f:I_0\to I_0$ which are {\it unimodal\/} with $f'(0)=0$, 
$f(0)=1$, {\it i.e.\/}, with a unique critical point at $0$ and critical value at $1$. Without loss of generality 
assume that $f$ is \emph{even}, in the sense that $f(-x)=f(x)$ for all $x \in I_0$. We assume throughout that the 
critical point is {\it non-flat\/}, as defined in the introduction. Such an $f$ is said to be {\it renormalizable\/} 
if there exist $p=p(f)>1$ and $\lambda=\lambda(f)=f^p(0)$ such that $f^p|[-|\lambda|,|\lambda|]$ is unimodal and 
maps $[-| \lambda|,|\lambda|]$ into itself. With $p$ smallest possible, the \emph{first renormalization\/} of $f$ 
is the map $Rf:I_0\to I_0$ given by
\begin{equation}
Rf(x)\;=\;\frac{1}{\lambda}\,f^p(\lambda x)
\end{equation}
The intervals $\Delta_j=f^j([-|\lambda|,|\lambda|])$, for $0\leq j\leq p-1$, have pairwise disjoint interiors, 
and their relative order inside $I_0$ determines a {\it unimodal} permutation $\theta$ of $\{0,1,\ldots,p-1\}$. 
Thus, renormalization consists of a first return map to a small 
neighbourhood of the critical point rescaled to unit size via a linear rescale (see Figure \ref{renorm}). 

\begin{figure}[ht]
\begin{center}{
\begin{tikzpicture}[out=45,in=135,relative]
% Intervals
\draw[line width=0.5pt] (0,0) to[line to] node {x} (9,0);
%\draw[red,line width=2pt] (4,0) to[line to] (5,0);
\draw[line width=2pt] (4,0) to[line to] (5,0);
\draw[line width=2pt] (2,0) to[line to] (3,0);
\draw[line width=2pt] (6,0) to[line to] (7,0);
\draw[line width=2pt] (8,0) to[line to] (9,0);
% Arrows
\draw[->] (4.5,0.3) to[curve to] node[above] {$f$} (8.5,0.3);
\draw[->] (8.5,-0.3) to[curve to] node[below] {$f$} (2.5,-0.3);
\draw[->] (2.5,0.3) to[curve to] node[above] {$f$} (6.5,0.3);
\draw[->] (6.5,-0.3) to[curve to] node[below] {$f$} (4.5,-0.3);
% Rescaling
\draw[->,dashed] (4,0) to[line to] node[left] {} (0,-6);
\draw[->,dashed]  (5,0) to[line to] node[right] {$\Lambda_f^{-1}$} (9,-6);
%\draw[blue,line width=2pt] (0,-6) to[line to] (9,-6);
\draw[line width=2pt] (0,-6) to[line to] (9,-6);
\draw[->,out=100,in=80] (3.2,-5.7) to[curve to]  node[above] {$\mathcal{R}f=\Lambda_f^{-1}\circ f^p\circ \Lambda_f$} (5.8,-5.7);
\end{tikzpicture}
}
\end{center}
\caption[renorm]{\label{renorm} Renormalizing a unimodal map.}
\end{figure}

Since $Rf$ is again a normalized unimodal map, one can ask whether $Rf$ is also renormalizable, and if 
the answer is yes then one can define $R^2f=R(Rf)$, and so on.  Thus, a unimodal map $f$ is said to be 
{\it infinitely renormalizable\/} if the entire sequence $f, Rf, R^2f, \ldots, R^nf, \ldots$ is well-defined. 

We assume henceforth that $f$ is infinitely renormalizable. Let us denote by $\mathcal{I}_f\subseteq I_0$ the closure 
of the orbit of the critical point of $f$. The set $\mathcal{I}_f$ is a Cantor set with zero Lebesgue measure, and is 
the {\it global attractor\/} of $f$ both from the \emph{topological} and \emph{metric} points of view: the set 
$\mathcal{B}(\mathcal{I}_f)=\big\{x \in I_0:\omega(x)=\mathcal{I}_f\big\}$ is a residual set in $I_0$, and it has 
full Lebesgue measure (see \cite[Section V.1]{demelovanstrien} and the references therein). 
Let us point out, however, that $\mathcal{B}(\mathcal{I}_f)$ has empty interior.

For each $n\ge 0$, we can write$$R^n f (x) = \frac{1}{\lambda_n} \cdot f^{q_n} (\lambda_n x)$$where 
$q_0=1$, $\lambda_0=1$, $q_n=\prod_{i=0}^{n-1} p(R^if)$ and $\lambda_n=\prod_{i=0}^{n-1}  \lambda (R^if)=f^{q_n}(0)$. 
The positive integers $a_i=p(R^if)\geq 2$ are called the {\it renormalization periods\/} of $f$, and the 
$q_n$'s are the {\it closest return times\/} of the orbit of the critical point (in perfect analogy with the case of 
critical circle maps). Note that $q_{n+1}=a_nq_n=\prod_{i=0}^{i=n}a_i \geq 2^{n+1}$, in particular the sequence 
$q_n$ goes to infinity at least exponentially fast.

Next, consider the {\it renormalization intervals\/} 
$\Delta_{0,n}=[-|\lambda_n|,|\lambda_n|]
\subset I_0$, and
$\Delta_{i,n}=f^i(\Delta_{0,n})$ for
$i=0,1, \ldots, q_n-1$.
The collection
${\mathcal C}_n=\{\Delta_{0,n}, \ldots, \Delta_{q_n-1,n}\}$
consists of pairwise disjoint intervals.
Moreover,
$\bigcup \{\Delta:\Delta\in {\mathcal C}_{n+1}\} \subseteq \bigcup
\{\Delta:\Delta\in {\mathcal C}_n\}$ for all $n \geq 0$ and we have
$$
\mathcal{I}_f=
\bigcap _{n=0}^{\infty}
\bigcup_{i=0}^{q_n-1}
\Delta_{i,n} \ .
$$

It is also well-known that $f|_{\mathcal{I}_f}$ is a homeomorphism, and that the dynamics of $f$ restricted 
to $\mathcal{I}_f$ is conjugate to that of an {\it adding machine\/} (see \cite[Section III.4, Prop. 4.5]{demelovanstrien}). 
More precisely, consider the following inverse system of cyclic groups 
(each endowed with the discrete topology): 
\[
\cdots \to \mathbb{Z}_{a_0a_1\cdots a_na_{n+1}}\to \mathbb{Z}_{a_0a_1\cdots a_n}\to \cdots \to \mathbb{Z}_{a_0a_1} 
\to \mathbb{Z}_{a_0} \ . 
\]
Here, the morphisms obviously correspond to multiplication by the successive periods $a_n$. 
The inverse limit $\mathbb{A}$ of this system together with the translation $\tau$ induced by $x\mapsto x+1$ 
on each factor (with carryover to the left) is a compact abelian group, called the 
{\it adding machine with periods $(a_0,a_1,\ldots)$\/} ({\it cf.\/} \cite[p.~212]{livroconf}). 
The system $(\mathbb{A},\tau)$ is a minimal and uniquely ergodic dynamical system (and so is $f|_{\mathcal{I}_f}$). 
The assertion is that there exists a homeomorphism $H:\mathbb{A}\to \mathcal{I}_f$ such that the diagram
$$
\begin{CD}
\mathbb{A}@>{\tau}>>\mathbb{A}\\
@V{H}VV             @VV{H}V\\
{\mathcal{I}_f}@>>{f}>{\mathcal{I}_f}
\end{CD}
$$
commutes. In particular, if $\nu$ denotes the unique invariant probability measure under $\tau$, then the 
pushforward $\mu=H_*\nu$ is the unique invariant probability measure for $f|_{\mathcal{I}_f}$. It 
is not difficult to check that this measure satisfies
\[
\mu(\Delta_{i,n}\cap \mathcal{I}_f)=\frac{1}{q_n}=\frac{1}{a_0a_1\cdots a_{n-1}}
\]
for each $n\geq 0$ and each $0\leq i\leq q_n-1$. This occurs simply because, at each level $n$, the intervals 
$\Delta_{i,n}$ with $0\leq i\leq q_n-1$ are permuted by $f$. 

Since $\mathcal{B}(\mathcal{I}_f)$ has full Lebesgue measure in $I_0$ and $f|_{\mathcal{I}_f}$ is uniquely ergodic, 
it is easy to check that the measure $\mu$ is the unique \emph{physical} measure for $f$ in $I_0$, 
see \cite[Section V.1, Theorem 1.6]{demelovanstrien}. Now, the exact analogue of Theorem \ref{main} can be stated as follows.

\begin{maintheorem}\label{unbounded} Let $f:I_0 \to I_0$ be a $C^3$ infinitely renormalizable unimodal map with 
non-flat critical point, and let $\mu$ be the unique $f$-invariant Borel probability measure supported in the 
closure of its post-critical set. Then $\log |Df|$ belongs to $L^1(\mu)$ and it has zero mean:
\[
 \int_{I_0}\log |Df|\,d\mu=0.
\]
Moreover, $f$ does not satisfy the Collet-Eckmann condition.
\end{maintheorem}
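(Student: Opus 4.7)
The plan is to adapt the second proof of Theorem \ref{main} (Section \ref{sec:newA}) to the unimodal setting. The renormalization cycle $\mathcal{C}_n=\{\Delta_{i,n}:0\le i<q_n\}$ takes the place of the dynamical partition $\mathcal{P}_n(c)$, the renormalization interval $\Delta_{0,n}=[-|\lambda_n|,|\lambda_n|]$ plays the role of $J_n(c)=I_n(c)\cup I_{n+1}(c)$, and the adding-machine description of $(f|_{\mathcal{I}_f},\mu)$ supplies the combinatorial control that the continued-fraction expansion provided in the circle case. The metric input is the classical real bounds for infinitely renormalizable unimodal maps (see \cite[Chapter VI]{demelovanstrien}): there exists $K>1$ such that, for all sufficiently large $n$, consecutive atoms of $\mathcal{C}_n$ have lengths comparable up to $K$, each $\Delta_{0,n}$ has a definite Koebe-space neighbourhood $\Delta_{0,n}^{*}\supset\Delta_{0,n}$, and $f^{q_n}$ extends diffeomorphically to $\Delta_{0,n}^{*}$ with uniformly bounded distortion.

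First I would establish $\log|Df|\in L^1(\mu)$ by mimicking Proposition \ref{int}. Non-flatness gives $\big|\log|Df|(x)\big|\le C_0\log\frac{1}{|x|}$ near $0$, while $\mu(\Delta_{0,n})=1/q_n$ forces $\mu(\Delta_{0,n}\setminus\Delta_{0,n+1})\le 1/q_n$. The real bounds imply $|\Delta_{0,n}|\ge \lambda^{n}$ for some $\lambda\in(0,1)$, and $q_n\ge 2^{n}$, so the telescoping sum
\[
\sum_{n\ge n_0}\mu(\Delta_{0,n}\setminus\Delta_{0,n+1})\log\tfrac{1}{|\Delta_{0,n+1}|}\ \le\ C\sum_{n}\tfrac{n}{q_n}\ <\ \infty,
\]
combined with monotone convergence, yields the required integrability.

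Next I would establish the unimodal analogue of Proposition \ref{newlemma2.3}: there exists $C=C(f)>0$ such that, for all $n$ sufficiently large and every $x\in I_0$, one has $\log|Df^{q_n}(x)|\le C$; moreover, if $f^i(x)\notin\Delta_{0,2n}$ for every $0\le i\le q_n$, then $\log|Df^{q_n}(x)|\ge -Cn$. The argument is the telescoping one from Section \ref{sec:newA}, but substantially streamlined by the presence of a single critical point. Given $x$, let $\Delta$ be an atom of a suitable refinement of $\mathcal{C}_n$ containing $x$, with Koebe-extended neighbourhood $\Delta^{*}$, and list the \emph{critical times} $0\le k_1<\cdots<k_m<q_n$ for which $0\in f^{k_j}(\Delta^{*})$; the integer $m$ is uniformly bounded by the intersection multiplicity of the orbit of $\Delta^{*}$. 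Between critical times, Koebe gives $|Df^{k_{j+1}-k_j-1}(f^{k_j+1}(x))|\asymp |f^{k_{j+1}}(\Delta)|/|f^{k_j+1}(\Delta)|$, while the power-law form near $0$ gives $|Df(f^{k_j}(x))|\asymp |f^{k_j}(x)|^{d-1}$ and $|f^{k_j+1}(\Delta)|\asymp |f^{k_j}(\Delta)|^{d}$. Telescoping yields $|Df^{q_n}(x)|\asymp |f^{q_n}(\Delta)|/|\Delta|$, bounded by a constant, which is the upper estimate. For the lower estimate, the hypothesis together with $|\Delta_{0,2n}|\gtrsim \lambda^{n}|f^{k_j}(\Delta)|$ forces $|f^{k_j}(x)|\gtrsim \lambda^{n}|f^{k_j}(\Delta)|$, so each of the at most $m$ critical-passage factors is worse than the upper-bound value by a multiplicative factor of at most $\lambda^{n(d-1)}$, producing the linear lower bound $-Cn$.

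With these estimates in hand the three conclusions of Theorem \ref{unbounded} follow easily. The non-Collet-Eckmann statement is immediate from the upper bound applied at the critical value $x=f(0)=1$, which keeps $\log|Df^{q_n}(1)|$ bounded along an entire subsequence. For the Lyapunov exponent, set
\[
A_n=\bigl\{x\in I_0: f^i(x)\notin\Delta_{0,2n}\ \text{for all}\ 0\le i<q_n\bigr\},\qquad A=\limsup_n A_n.
\]
Since $\mu(\Delta_{0,2n})=1/q_{2n}$ and $q_n/q_{2n}\le 2^{-n}$, $f$-invariance and a union bound give $\mu(A_n)\ge 1-q_n/q_{2n}\to 1$, hence $\mu(A)=1$. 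On $A$ the bounds above give a subsequence $n_k\to\infty$ with $\log|Df^{q_{n_k}}(x)|/q_{n_k}\to 0$; combined with the $L^1$-integrability and Birkhoff's ergodic theorem for the uniquely ergodic system $(f|_{\mathcal{I}_f},\mu)$, this forces $\int\log|Df|\,d\mu=0$. The main obstacle is the third step: one must specify the exact partition of $I_0$ (the refinement of $\mathcal{C}_n$ by its gap intervals) that plays the role of $\mathcal{P}_n(c)$, and verify that it inherits the comparability and Koebe-space properties needed for the telescoping bookkeeping to be uniform. This is essentially standard in unimodal renormalization theory, but constitutes the technical heart of the argument.
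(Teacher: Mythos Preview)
Your overall strategy (integrability, an upper bound for $|Df^{q_n}|$, a lower bound on a full-measure set, then Birkhoff) is sound, but the specific metric input you invoke is not available for unbounded combinatorics, and the paper in fact proceeds rather differently.

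The two assertions that fail are: (i) ``consecutive atoms of $\mathcal{C}_n$ have lengths comparable up to $K$'', and (ii) ``$|\Delta_{0,n}|\ge\lambda^n$ for some $\lambda\in(0,1)$''. For infinitely renormalizable unimodal maps these statements are the content of \emph{bounded geometry}, which is only known when the renormalization periods $a_n$ are bounded; for unbounded type the central interval can shrink super-exponentially (roughly $|\Delta_{0,n+1}|\lesssim|\Delta_{0,n}|/a_n$), and intervals at the same level need not be pairwise comparable. This breaks your integrability estimate $\sum_n\mu(\Delta_{0,n}\setminus\Delta_{0,n+1})\log\frac{1}{|\Delta_{0,n+1}|}\le C\sum_n n/q_n$, and more seriously it breaks the lower bound in your analogue of Proposition~\ref{newlemma2.3}: the step ``$|\Delta_{0,2n}|\gtrsim\lambda^n|f^{k_j}(\Delta)|$'' does not hold, so you cannot get $\log|Df^{q_n}(x)|\ge -Cn$ from the hypothesis $f^i(x)\notin\Delta_{0,2n}$. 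Sullivan's real bounds in this setting (Lemma~\ref{sullivanbounds}) give each $\Delta_{j,n}$ a definite \emph{space} around it, hence the bounded distortion of Lemma~\ref{BoDist}, but they do \emph{not} give comparability of adjacent intervals or an exponential lower bound on $|\Delta_{0,n}|$.

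The paper avoids these issues entirely by replacing the telescoping lower bound with a much simpler device. For $x\in\mathcal{I}_f$ it considers the \emph{entrance times} $v_n(x)=\min\{j\ge 0:f^j(x)\in\Delta_{0,n}\}$ (Definition~\ref{entrtimes}) and shows (Lemma~\ref{novoA}) that $v_n(x)\to\infty$ for $\mu$-a.e.\ $x$. Since $f^{v_n(x)}:\Delta_{q_n-v_n(x),n}\to\Delta_{0,n}$ is a diffeomorphism with bounded distortion (Lemma~\ref{BoDist}), one has $|Df^{v_n(x)}(x)|\asymp|\Delta_{0,n}|/|\Delta_{q_n-v_n(x),n}|$; Guckenheimer's lemma (Lemma~\ref{centralmaior}) that the central interval is, up to a fixed constant, the \emph{largest} at each level then yields $|Df^{v_n(x)}(x)|\ge\varepsilon/K_1$ outright, with no need to control $|\Delta_{0,2n}|/|\Delta_{0,n}|$. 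The upper bound along $q_n$ (your argument, with the single critical passage, is exactly the paper's Proposition~\ref{upperbound}) and Birkhoff then finish the proof. For integrability the paper simply cites Przytycki; Lemma~\ref{remarkint} gives the correct two-sided replacement for your telescoping sum, namely $\int|\log|Df||\,d\mu\asymp\sum_n q_n^{-1}\log(|\Delta_{0,n}|/|\Delta_{0,n+1}|)$, whose convergence is again not elementary in the unbounded case. In short: keep your upper bound, but replace the $\Delta_{0,2n}$-avoidance argument by entrance times plus Guckenheimer's lemma, and cite Przytycki for integrability.
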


Note that Theorem \ref{unbounded} is stated for $C^3$ infinitely renormalizable unimodal maps of any combinatorial type, 
with non-flat critical point of any criticality and without any assumption about the Schwarzian derivative.

If $f$ has negative Schwarzian derivative and non-degenerate critical point (that is, $D^2\!f(c) \neq 0$), 
Theorem \ref{unbounded} goes back to Keller \cite[Theorem 3, page 722]{keller}. It is also well-known that an infinitely 
renormalizable unimodal map does not satisfy the Collet-Eckmann condition (otherwise it would admit an absolutely 
continuous invariant probability measure, see \cite{acip} and the references therein, which is impossible since 
its non-wandering set has zero Lebesgue measure \cite[Section VI.2, Theorem 2.1]{demelovanstrien}, see also \cite{lishen}). 
The lack of the Collet-Eckmann condition will be re-obtained here, just as in Theorem \ref{main}, by showing that there 
exists a subsequence of $\log\big|Df^n\big(f(c)\big)\big|/n$ converging to zero.

\begin{remark}
Just as in the case of critical circle maps, Theorem \ref{unbounded} suggests the question whether 
$\lim\big\{\log\big|Df^n\big(f(c)\big)\big|/n\big\}=0$, see Question \ref{expcrit}. Again, for this purpose 
it would be enough to prove that the limit exists, for instance by proving that the critical value of $f$ is a 
$\mu$-typical point for the Birkhoff's averages of $\log|Df|$. 
We remark that Nowicki and Sands have proven in \cite{nowsands} 
that 
\[
\liminf_{n\to\infty}\left\{\frac{1}{n}\log{\left|Df^n\left(f(c)\right)\right|}\right\}\;\geq\; 0\ .
\] 
Combined with what we said above and will prove below, this fact implies that, if the limit exists, it must 
be equal to zero. 
\end{remark}

\subsection{Proof of Theorem \ref{unbounded}} Sullivan has shown in \cite{sullivan} that, at every level of renormalization, 
each renormalization interval $\Delta_{j,n}$ has a definite space around itself inside $I_0$. We can state this particular result 
by Sullivan as follows ({\it cf.\/} \cite[Section VI.2, Lemma 2.1]{demelovanstrien}). Given a closed interval $\Delta$ and $\delta>0$, we 
define the {\it $\delta$-scaled neighborhood\/} of $\Delta$ to be the open interval $V\supset \Delta$ such that each component of 
$V\setminus \Delta$ has length equal to $\delta|\Delta|$.  

\begin{lemma}\label{sullivanbounds}
 There exists $\tau=\tau(f)>0$ such that, for each $n\geq 1$, the $\tau$-scaled neighborhoods 
$V_{j,n}\supset  \Delta_{j,n}$, for $0\leq j\leq q_n-1$, are pairwise disjoint. 
\end{lemma}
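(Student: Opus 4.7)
The plan is to establish this by induction on the renormalization level $n$, following Sullivan's original argument based on the Koebe distortion principle for $C^3$ unimodal maps with non-flat critical point.

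For the base case $n=1$, renormalizability gives that $\Delta_{0,1}=[-|\lambda_1|,|\lambda_1|]$ sits compactly inside $I_0$, and the iterates $\Delta_{j,1}=f^j(\Delta_{0,1})$ for $0\leq j\leq q_1-1$ are pairwise disjoint closed intervals. By compactness, some $\tau_1>0$ produces pairwise disjoint $\tau_1$-scaled neighborhoods.

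For the inductive step, the key combinatorial observation is that exactly $a_n$ of the intervals $\{\Delta_{j,n+1}\}_{0\leq j\leq q_{n+1}-1}$ sit inside each $\Delta_{i,n}$. It therefore suffices to produce, within each host $\Delta_{i,n}$, a uniform $\tau'>0$ such that these $a_n$ sub-intervals of level $n+1$ admit pairwise disjoint $\tau'$-scaled neighborhoods inside $\Delta_{i,n}$. Inside the critical host $\Delta_{0,n}$ the required spacing is automatic: the $a_n$ sub-intervals form the level-$1$ renormalization cycle of $R^nf$, so the base case applied to $R^nf$ supplies a definite gap. To transport this spacing to a non-critical host $\Delta_{i,n}$ (with $1\leq i\leq q_n-1$), I would use the branch $f^{i-1}\colon f(\Delta_{0,n})\to \Delta_{i,n}$, which is a diffeomorphism because the critical point $0$ belongs only to $\Delta_{0,n}$; the inductive hypothesis that $V_{0,n},\ldots,V_{q_n-1,n}$ are pairwise disjoint then ensures that this branch extends diffeomorphically over a definite Koebe enlargement of $f(\Delta_{0,n})$. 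Koebe's distortion theorem yields bounded distortion for $f^{i-1}$ there, and combining this with the explicit power-law form \eqref{newApowerlaw} handling the nonlinear first iterate $f\colon \Delta_{0,n}\to f(\Delta_{0,n})$ transfers the relative gaps inside $\Delta_{0,n}$ to relative gaps inside $\Delta_{i,n}$.

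The main obstacle is closing the induction with a constant $\tau=\tau(f)>0$ that does not degrade as $n\to\infty$. A naive iteration of the Koebe estimate would compound the distortion loss across levels and produce $\tau_n\to 0$. The correct remedy, and the real heart of Sullivan's argument, is to replace the crude Koebe bound by the finer cross-ratio distortion control available for $C^3$ unimodal maps with non-flat critical point, as developed in \cite[Section IV.2]{demelovanstrien}: this sharper estimate bounds the \emph{total} nonlinear distortion accumulated over all renormalization levels by a single constant depending only on $f$, thereby closing the induction and producing the universal scale $\tau=\tau(f)>0$.
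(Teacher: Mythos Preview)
The paper does not prove this lemma at all: it is stated as a known result of Sullivan, with a pointer to \cite{sullivan} and to \cite[Section VI.2, Lemma 2.1]{demelovanstrien}, and is then used as a black box to derive the bounded-distortion Lemma~\ref{BoDist}. So there is no ``paper's own proof'' to compare against; your proposal is an attempt to supply an argument where the authors chose to cite one.

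That said, your outline tracks the standard Sullivan argument reasonably well, and you correctly flag the genuine difficulty: a naive level-by-level induction via Koebe loses a multiplicative factor at each step and yields only $\tau_n\to 0$. Two points deserve tightening. First, invoking ``the base case applied to $R^nf$'' to get definite gaps inside $\Delta_{0,n}$ is circular as stated: the constant you obtain this way depends on $R^nf$, and without a priori control on the sequence $\{R^nf\}$ (which is essentially what you are trying to prove) you cannot claim uniformity in $n$. The usual route avoids this by working directly with $f$ and a smallest-interval argument combined with the cross-ratio inequality, rather than by appealing to renormalized maps. Second, your final paragraph gestures at the right tool --- the cross-ratio distortion bounds for $C^3$ maps with non-flat critical point --- but stops short of saying how it closes the loop; the key point is that the total cross-ratio distortion along any chain of intervals with bounded intersection multiplicity is controlled by a single constant in $f$, independent of the level, and this is what replaces the degrading Koebe estimate. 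As written, your text is a correct roadmap but not yet a proof; filling it in amounts to reproducing the argument in \cite[Section VI.2]{demelovanstrien}, which is exactly what the paper cites.
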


This fact combined with 
Koebe distortion principle implies that $f^{k-j}:\Delta_{j,n}\to\Delta_{k,n}$ is a diffeomorphism with 
\emph{bounded distortion} for all $1 \leq j \leq k \leq q_n$. This is the contents of the following lemma, 
whose proof can be found in \cite[Section VI.2, Theorem 2.1, item 1]{demelovanstrien}.

\begin{lemma}\label{BoDist} There exists $K_1=K_1(f)>1$ such that:
\begin{equation}\label{boudist}
\frac{1}{K_1}\frac{|\Delta_{k,n}|}{|\Delta_{j,n}|}\leq\big|Df^{k-j}(x)\big|\leq K_1\frac{|\Delta_{k,n}|}{|\Delta_{j,n}|}
\end{equation}
for all $x\in\Delta_{j,n}$ and all $n \geq 0$.
\end{lemma}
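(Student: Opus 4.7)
The plan is to apply the Koebe distortion principle to $f^{k-j}$ on a suitable neighborhood obtained by pulling back Sullivan's neighborhood $V_{k,n}$ through the inverse branches of $f^{k-j}$. First, I would fix $n\geq 1$ and indices $1\leq j\leq k\leq q_n$, and define recursively $U_{k-j}=V_{k,n}$ and, for $i=k-j-1,k-j-2,\ldots,0$, let $U_i$ be the component of $f^{-1}(U_{i+1})$ containing $\Delta_{j+i,n}$. (The case $j=0$ is trivial since $Df(0)=0$; one takes $j\geq 1$ so that $\Delta_{j,n}$ avoids the critical point.)

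The core step is to establish the containment $U_i\subseteq V_{j+i,n}$ for every $i$, which I would prove by reverse induction on $i$. Granted the inductive hypothesis $U_{i+1}\subseteq V_{j+i+1,n}$, the disjointness from Lemma \ref{sullivanbounds}, together with the fact that the critical point $0$ of $f$ belongs to $V_{0,n}$ and to no other $V_{l,n}$, ensures that the correct inverse branch of $f$ is well-defined and monotone on a neighborhood of $\Delta_{j+i+1,n}$ (the other branch, which would cross through $0$, lies outside the $V$'s of interest since $j+i+1\neq 0$). The definite $\tau$-scaled space around each $\Delta_{j+i,n}$ then prevents the pulled-back interval from escaping $V_{j+i,n}$. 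Once this is secured, $f^{k-j}:U_0\to V_{k,n}$ is a diffeomorphism, and by construction each component of $V_{k,n}\setminus\Delta_{k,n}$ has length $\tau\,|\Delta_{k,n}|$.

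With this geometric setup in place, the Koebe distortion principle for $C^3$ maps with non-flat critical points (see \cite[Ch.~IV]{demelovanstrien}) yields a constant $K_1=K_1(\tau,f)>1$ such that
\[
\frac{|Df^{k-j}(x)|}{|Df^{k-j}(y)|}\leq K_1\quad\text{for all }x,y\in\Delta_{j,n}.
\]
Combining this bounded distortion with the Mean Value Theorem, which gives $|\Delta_{k,n}|=|Df^{k-j}(\xi)|\cdot|\Delta_{j,n}|$ for some $\xi\in\Delta_{j,n}$, immediately yields the two-sided bound \eqref{boudist}.

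The main obstacle will be the containment step $U_i\subseteq V_{j+i,n}$: this requires carefully choosing the correct inverse branch of $f$ at each stage and ruling out interference with the branch running through the critical point. Here Sullivan's pairwise disjointness of the $\tau$-scaled neighborhoods is essential, and the argument must exploit the combinatorial fact that $0$ sits only inside $V_{0,n}$ while the forward orbit of $\Delta_{j,n}$ under consideration is confined to the $V_{l,n}$ with $l\geq 1$.
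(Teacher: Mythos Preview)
Your overall strategy --- pull back Sullivan's $\tau$-scaled neighbourhood $V_{k,n}$ along the orbit and invoke the Koebe principle --- is precisely the standard argument the paper has in mind (the paper itself gives no proof and simply refers to \cite[Section VI.2, Theorem 2.1]{demelovanstrien}). The gap in your proposal lies in the inductive containment $U_i\subseteq V_{j+i,n}$. Granting the hypothesis $U_{i+1}\subseteq V_{j+i+1,n}$ does allow you to select a monotone inverse branch of $f$ on $U_{i+1}$ (since $j+i+1\ge 2$, so $U_{i+1}$ misses the critical value $1\in\Delta_{1,n}\subset V_{1,n}$), and hence $0\notin U_i$. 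But nothing in your argument bounds the \emph{size} of $U_i$: the sentence ``the definite $\tau$-scaled space around each $\Delta_{j+i,n}$ then prevents the pulled-back interval from escaping $V_{j+i,n}$'' is a non-sequitur. The $\tau$-space tells you how large $V_{j+i,n}$ is, not how small $U_i$ is; whenever $|Df|$ is smaller on a side gap than on $\Delta_{j+i,n}$ itself, the pulled-back side interval is \emph{longer} (relative to $|\Delta_{j+i,n}|$) than the one you started from, and these discrepancies compound over the iterates. The containment is in general false, so the induction collapses.

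The repair is to drop the containment and argue directly that the monotone pullback of $V_{k,n}$ exists. Let $J\supset\Delta_{j,n}$ be the maximal interval on which $f^{k-j}$ is monotone. If $f^{k-j}(J)\not\supseteq V_{k,n}$, then some endpoint $e$ of $J$ is a critical point of $f^{k-j}$ with $f^{k-j}(e)\in V_{k,n}$; say $f^i(e)=0$ with $0\le i<k-j$. Then $f^{k-j}(e)=f^{k-j-i}(0)\in\Delta_{k-j-i,n}\subset V_{k-j-i,n}$, and since $1\le k-j-i\le k-j<k$, Lemma~\ref{sullivanbounds} gives $V_{k-j-i,n}\cap V_{k,n}=\emptyset$, a contradiction. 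This is how the disjointness of the $V_{l,n}$ is actually used. Finally, note that for the $C^3$ Koebe principle (no negative-Schwarzian hypothesis) you also need a uniform bound on $\sum_{i}|f^i(U_0)|$; your containment would have delivered this for free, so without it a separate (standard) argument is required.
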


The proposition below follows from the fact that the successive renormalizations of $f$ form a bounded sequence 
in the $C^1$-topology, which in turn is a consequence of Lemma \ref{sullivanbounds} and Lemma \ref{BoDist}. 
Nevertheless, we provide a proof as a courtesy to the reader. 

\begin{prop}\label{upperbound} There exists a constant $C_0=C_0(f)>1$ such that for all $n \geq 0$ and 
each $x\in\bigcup_{j=0}^{q_n-1}\Delta_{j,n}$ we have $\big|Df^{q_n}(x)\big| \leq C_0$. In particular $f$ does 
not satisfy the Collet-Eckmann condition.
\end{prop}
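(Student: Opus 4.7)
The strategy is to exploit the fact that, for any $x\in\bigcup_{j=0}^{q_n-1}\Delta_{j,n}$, the orbit $x,f(x),\ldots,f^{q_n-1}(x)$ is permuted cyclically through the pairwise disjoint intervals $\Delta_{0,n},\Delta_{1,n},\ldots,\Delta_{q_n-1,n}$. In particular, the critical neighbourhood $\Delta_{0,n}$---the unique member of this family that contains the critical point $0$---is visited \emph{exactly once} in the first $q_n$ iterates. Hence the only factor in the product $Df^{q_n}(x)=\prod_{i=0}^{q_n-1}Df(f^i(x))$ that can blow up or collapse due to proximity to the critical point is the single one corresponding to this visit; the remaining iterates lie in diffeomorphic chains among the $\Delta_{i,n}$'s, to which Lemma \ref{BoDist} applies.

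Concretely, for $x\in\Delta_{j,n}$ with $1\leq j\leq q_n-1$, set $y=f^{q_n-j}(x)\in\Delta_{0,n}$ and apply the chain rule:
\[
Df^{q_n}(x)\;=\;Df^{q_n-j}(x)\cdot Df(y)\cdot Df^{j-1}(f(y)).
\]
Lemma \ref{BoDist} then yields $|Df^{q_n-j}(x)|\leq K_1|\Delta_{0,n}|/|\Delta_{j,n}|$ and $|Df^{j-1}(f(y))|\leq K_1|\Delta_{j,n}|/|\Delta_{1,n}|$, since the chains $\Delta_{j,n},\Delta_{j+1,n},\ldots,\Delta_{q_n-1,n}$ and $\Delta_{1,n},\Delta_{2,n},\ldots,\Delta_{j-1,n}$ both avoid the critical interval $\Delta_{0,n}$. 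The non-flatness of $f$ at $0$ of criticality $d$ furnishes $|Df(y)|=O(|\lambda_n|^{d-1})$, while $|\Delta_{1,n}|=|f(\Delta_{0,n})|\asymp|\lambda_n|^d$ and $|\Delta_{0,n}|=2|\lambda_n|$. Multiplying these estimates, the factors of $|\lambda_n|$ cancel and we obtain $|Df^{q_n}(x)|\leq C_0$ for a constant $C_0=C_0(f)$ independent of $n$ and of $j$. The boundary case $x\in\Delta_{0,n}$ is handled in the same spirit by the shorter factorization $Df^{q_n}(x)=Df(x)\cdot Df^{q_n-1}(f(x))$, with an identical cancellation.

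For the Collet-Eckmann conclusion, note that the critical value $f(0)=1$ belongs to $\Delta_{1,n}$ for every $n\geq 0$, so the bound above gives $|Df^{q_n}(f(0))|\leq C_0$ for every $n$. Consequently,
\[
\liminf_{n\to\infty}\frac{1}{n}\log|Df^n(f(0))|\;\leq\;\lim_{n\to\infty}\frac{\log C_0}{q_n}\;=\;0,
\]
which is incompatible with the Collet-Eckmann lower bound $\log\lambda>0$.

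The only delicate technical point is the use of Lemma \ref{BoDist} in the ``closing'' segment $f^{q_n-j}:\Delta_{j,n}\to f^{q_n}(\Delta_{0,n})\subseteq\Delta_{0,n}$, whose image interval sits inside the critical interval. This is legitimate because the associated Sullivan extended neighbourhoods $V_{j,n},V_{j+1,n},\ldots,V_{q_n-1,n}$ are disjoint from $V_{0,n}$ (by Lemma \ref{sullivanbounds}) and therefore contain no critical point of $f$, so the Koebe distortion argument underlying Lemma \ref{BoDist} still applies verbatim. Beyond this observation, the argument is just a careful bookkeeping of the cancellation between the singular derivative at the critical point and the $d$-th power compression of $\Delta_{0,n}$ into $\Delta_{1,n}$.
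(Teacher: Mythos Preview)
Your proof is correct and follows essentially the same approach as the paper: both decompose $Df^{q_n}(x)$ via the chain rule around the unique visit of the orbit to $\Delta_{0,n}$, apply Lemma~\ref{BoDist} to the two diffeomorphic pieces, bound the single critical factor by $O(|\lambda_n|^{d-1})$ using non-flatness, and observe that $|\Delta_{1,n}|\asymp|\lambda_n|^{d}$ so that everything cancels. Your separate treatment of the case $x\in\Delta_{0,n}$ is in fact slightly more explicit than the paper's, and your final paragraph, while harmless, is unnecessary: Lemma~\ref{BoDist} as stated already covers the segment $f^{q_n-j}:\Delta_{j,n}\to\Delta_{q_n,n}$ (take $k=q_n$), and $|\Delta_{q_n,n}|\leq|\Delta_{0,n}|$ gives the bound you want directly.
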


\begin{proof}[Proof of Proposition \ref{upperbound}] Since $0 \in I_0$ is a non-flat critical point with exponent 
$d>1$ there exist $0<a<b$ such that $a|x|^{d-1}\leq\big|Df(x)\big|\leq b|x|^{d-1}$ for all $x\in\Delta_{0,n}$ and 
all $n\in\nt$. In particular
\begin{equation}\label{naofl}
\big|Df(x)\big| \leq b|\Delta_{0,n}|^{d-1}
\end{equation}
for all $x\in\Delta_{0,n}$ and all $n\in\nt$.

Now, given $n\in\nt$, $j\in\{0,1,...,q_n-1\}$ and $x\in\Delta_{j,n}$ note that $f^{q_n-j}(x)\in\Delta_{0,n}$ and 
then $f^{q_n-j+1}(x)\in\Delta_{1,n}$. From \eqref{boudist} we deduce that:
\begin{equation}\label{a3}
\frac{1}{K_1}\frac{|\Delta_{0,n}|}{|\Delta_{j,n}|}\leq\big|Df^{q_n-j}(x)\big|\leq K_1\frac{|\Delta_{0,n}|}{|\Delta_{j,n}|}
\end{equation}
and also that
\begin{equation}\label{a4}
\frac{1}{K_1}\frac{|\Delta_{j,n}|}{|\Delta_{1,n}|}\leq\big|Df^{j-1}\big(f^{q_n-j+1}(x)\big)\big|\leq K_1\frac{|\Delta_{j,n}|}{|\Delta_{1,n}|}\,,
\end{equation}
while from \eqref{naofl} we have that
\begin{equation}\label{a5}
\big|Df\big(f^{q_n-j}(x)\big)\big| \leq b|\Delta_{0,n}|^{d-1}.
\end{equation}

From the chain rule and \eqref{a3}, \eqref{a4} and \eqref{a5} we get
\begin{equation}\label{a6}
\big|Df^{q_n}(x)\big| \leq b\,K_1^2\,\frac{|\Delta_{0,n}|^d}{|\Delta_{1,n}|}\,.
\end{equation}

Since $\big|Df(y)\big| \geq a|y|^{d-1}$ for all $y\in\Delta_{0,n}$ and since $d>1$ we get:
$$|\Delta_{1,n}|=\int_{0}^{|\lambda_n|}\!\big|Df(y)\big|\,dy \geq a\int_{0}^{|\lambda_n|}\!|y|^{d-1}dy=a\int_{0}^{|\lambda_n|}\!y^{d-1}dy=\frac{a}{d}\,|\lambda_n|^{d},$$that is, $|\Delta_{1,n}|\geq\frac{a}{d}\frac{1}{2^d}|\Delta_{0,n}|^d$. With this at hand and \eqref{a6} we finally obtain\footnote{Another way to obtain a positive lower bound for the sequence of ratios $|\Delta_{1,n}|/|\Delta_{0,n}|^d$ is by noting that $|\Delta_{1,n}|=\big|\phi(\Delta_{0,n})\big|^d=\big|D\phi(x_n)\big|^d|\Delta_{0,n}|^d$, where the $C^3$ local diffeomorphism $\phi$, which fixes the origin, is given by the definition of non-flat critical point (see the introduction) and $x_n$ is some point in $\Delta_{0,n}$.}$$\big|Df^{q_n}(x)\big| \leq \frac{b\,K_1^2\,d\,2^d}{a}=:C_0\quad\mbox{for all $x\in\Delta_{j,n}$.}$$
\end{proof}

Proposition \ref{upperbound} gives us large iterates where $|Df|$ is far away from infinity. With this at hand, 
we just need to find large iterates where $|Df|$ is far away from zero. For this purpose we give the following definition:

\begin{definition}\label{entrtimes} For each $x\in\mathcal{I}_f$ and each $n \geq 0$ we define the \emph{$n$-th entrance time} 
of $x$, denoted by $v_n(x)$, to be:$$v_n(x)=\min\big\{j \geq 0:f^j(x)\in\Delta_{0,n}\big\}.$$
\end{definition}

Note that $v_n(x)\in\{0,1,...,q_n-1\}$ and that $v_{n+1}(x) \geq v_n(x)$ for all $n \geq 0$ and all $x\in\mathcal{I}_f$. 
Note also that $v_n\big(f(c)\big)=q_n-1$ for all $n \geq 0$.

\begin{lemma}\label{novoA} For $\mu$-almost every $x\in\mathcal{I}_f$ we have $v_n(x)\to+\infty$ as $n\to+\infty$.
\end{lemma}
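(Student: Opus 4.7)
The plan is to prove this via a direct Borel--Cantelli argument, exploiting the $f$-invariance of $\mu$ together with the fact, established earlier in this section, that $\mu(\Delta_{0,n}\cap \mathcal{I}_f)=1/q_n$ and that $q_n$ grows at least exponentially fast ($q_n\geq 2^n$).

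First, I would observe that for any fixed $M\in\nt$, the sublevel set
\[
A_{n,M}\;=\;\{x\in\mathcal{I}_f:\,v_n(x)\leq M\}\;=\;\bigcup_{j=0}^{M}f^{-j}(\Delta_{0,n})\cap\mathcal{I}_f\,.
\]
Using $f$-invariance of $\mu$ and the identity $\mu(\Delta_{0,n}\cap\mathcal{I}_f)=1/q_n$, this gives the crucial estimate
\[
\mu(A_{n,M})\;\leq\;(M+1)\,\mu(\Delta_{0,n}\cap\mathcal{I}_f)\;=\;\frac{M+1}{q_n}\,.
\]

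Next, since $q_n\geq 2^n$, the series $\sum_{n\geq 0}\mu(A_{n,M})$ converges. By the Borel--Cantelli lemma,
$\mu(\limsup_n A_{n,M})=0$, that is, for $\mu$-almost every $x\in\mathcal{I}_f$ one has $v_n(x)>M$ for all $n$ sufficiently large (depending on $x$ and $M$).

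Finally, intersecting over $M\in\nt$ (a countable intersection of full $\mu$-measure sets is again of full $\mu$-measure), I obtain a set of full $\mu$-measure on which, for every $M\in\nt$, there exists $N=N(x,M)$ with $v_n(x)>M$ whenever $n\geq N$. This is exactly the statement that $v_n(x)\to+\infty$. There is no real obstacle here: the argument is entirely structural, relying only on invariance of $\mu$, the explicit values of $\mu$ on the pieces $\Delta_{0,n}\cap\mathcal{I}_f$, and the (at least) exponential growth of $q_n$ which makes Borel--Cantelli applicable.
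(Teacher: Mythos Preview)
Your proof is correct and takes a genuinely different route from the paper's. The paper defines the events $A_n=\{x:v_{n+1}(x)>v_n(x)\}$ and, via a careful combinatorial analysis of how the intervals $\Delta_{j+kq_n,n+1}$ sit inside $\Delta_{j,n}$, computes the exact value $\mu(A_n)=1-1/a_n\geq 1/2$; it then observes that $\limsup_n A_n$ is $f$-invariant and coincides with $\{x:v_n(x)\to\infty\}$, and invokes ergodicity of $\mu$ to upgrade $\mu(\limsup A_n)\geq 1/2$ to full measure. Your argument bypasses both the combinatorics and the use of ergodicity: you only need the invariance of $\mu$, the single identity $\mu(\Delta_{0,n}\cap\mathcal{I}_f)=1/q_n$, and the bound $q_n\geq 2^n$ to run Borel--Cantelli on the sublevel sets $\{v_n\leq M\}$. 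What you gain is a shorter and more elementary proof with weaker hypotheses; what the paper's approach buys is an explicit formula for $\mu(A_n)$, which is more informative about the structure of the entrance-time sequence even though it is not needed downstream.
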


\begin{proof}[Proof of Lemma \ref{novoA}] For each $n \geq 0$ we define:$$A_n=\big\{x\in\mathcal{I}_f:v_{n+1}(x)>v_n(x)\big\}.$$

We also define:$$A=\limsup_{n\in\nt}A_n=\bigcap_{k \geq 0}\bigcup_{n=k}^{+\infty}A_n\,.$$

The set $A$ is $f$-invariant, and $x \in A$ if and only if $v_n(x)\to+\infty$ as $n\to+\infty$. We claim that $\mu(A_n)=1-1/a_n \geq 1/2$ for all $n \geq 0$.

Indeed, fix $n\in\nt$ and $j\in\{0,1,...,q_n-1\}$, and note that $v_n(x)=q_n-j$ for all $x\in\Delta_{j,n}\cap\mathcal{I}_f$. The intervals of the $(n+1)$-th level of renormalization contained in $\Delta_{j,n}$ are easily seen to be $\Delta_{j+kq_n,n+1}$ where $k\in\{0,1,...,a_n-1\}$.

Note that if $x\in\Delta_{j+kq_n,n+1}$ then $v_{n+1}(x)=q_{n+1}-(j+kq_n)=a_nq_n-(q_n-v_n(x)+kq_n)=v_n(x)+(a_n-1-k)q_n$. Thus, if $k=a_n-1$ then $v_{n+1}(x)=v_n(x)$, but if $k\in\{0,1,...,a_n-2\}$ then $v_{n+1}(x) \geq v_n(x)+q_n > v_n(x)$. Hence we have $A_n \cap \Delta_{j,n}=\bigcup_{k=0}^{a_n-2}\Delta_{j+kq_n,n+1}$ and then:$$\mu(A_n \cap \Delta_{j,n})=\sum_{k=0}^{a_n-2}\mu(\Delta_{j+kq_n,n+1})=(a_n-1)\mu(\Delta_{0,n+1})=\frac{a_n-1}{q_{n+1}}=\frac{1}{q_n}\left(1-\frac{1}{a_n}\right).$$

Thus $\mu(A_n)=\sum_{j=0}^{q_n-1}\mu(A_n \cap \Delta_{j,n})=1-\frac{1}{a_n}\geq\frac{1}{2}$ as was claimed. The claim implies that $\mu(A)\geq\frac{1}{2}>0$, and since $f$ is ergodic with respect to $\mu$ and the set $A$ is $f$-invariant, it follows that $\mu(A)=1$.
\end{proof}

We shall use the fact, due to Guckenheimer in the late seventies \cite{guck}, that at each renormalization level the interval containing the critical point is the largest (up to multiplication by a constant). More precisely:

\begin{lemma}\label{centralmaior} There exists a constant $\varepsilon=\varepsilon(f)>0$ such that $|\Delta_{0,n}|\geq\varepsilon|\Delta_{j,n}|$ for all $j\in\{0,1,...,q_n-1\}$ and all $n \geq 0$.
\end{lemma}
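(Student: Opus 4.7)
The plan is to reduce the statement, via the bounded distortion of Lemma \ref{BoDist}, to a uniform lower bound on a derivative, and then to obtain that bound from Koebe combined with Sullivan's real bounds (Lemma \ref{sullivanbounds}).

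The case $j = q_n$ is trivial, since $\Delta_{q_n,n} \subseteq \Delta_{0,n}$ by renormalizability. The case $j = 1$ follows directly from the non-flat critical point hypothesis: since $f(x) - 1 \asymp -|x|^d$ near $0$, we have $|\Delta_{1,n}| = |f(\Delta_{0,n})| \asymp |\Delta_{0,n}|^d$, so the ratio $|\Delta_{1,n}|/|\Delta_{0,n}|$ is bounded by a constant depending only on $f$ (using $|\Delta_{0,n}| \le |I_0| = 2$).

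For the substantive range $j \in \{2, \ldots, q_n - 1\}$, I would apply Lemma \ref{BoDist} to the diffeomorphism $f^{q_n - j}\colon \Delta_{j,n} \to \Delta_{q_n, n}$ and use $\Delta_{q_n, n} \subseteq \Delta_{0,n}$ to get
\[
|\Delta_{j,n}| \;\le\; K_1 \frac{|\Delta_{q_n,n}|}{|Df^{q_n-j}(y)|} \;\le\; K_1 \frac{|\Delta_{0,n}|}{|Df^{q_n-j}(y)|}
\]
for any $y \in \Delta_{j,n}$, reducing the problem to producing a uniform lower bound $|Df^{q_n-j}(y)| \ge c(f) > 0$. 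I would obtain this bound by applying Koebe's principle to the inverse branch $(f^{q_n - j})^{-1}\colon \Delta_{q_n, n} \to \Delta_{j, n}$: the $\tau$-neighborhood $V_{0,n} \supset \Delta_{0,n}$ supplied by Lemma \ref{sullivanbounds} provides definite Koebe space around $\Delta_{q_n, n}$, and pulling it back along this branch produces a neighborhood $\widetilde V_{j,n} \supset \Delta_{j, n}$ on which the inverse has bounded distortion. The pairwise disjointness of the neighborhoods $V_{k, n}$ inside $I_0$ then bounds $|\widetilde V_{j,n}|$ from above, and a length comparison using the Koebe distortion estimate delivers the required lower bound on $|Df^{q_n - j}(y)|$.

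The main obstacle is verifying that the inverse branch $(f^{q_n - j})^{-1}$ genuinely extends as a diffeomorphism to the entire neighborhood $V_{0, n}$, i.e., that no intermediate forward iterate of $\widetilde V_{j,n}$ meets the critical point $0$. This hinges on the combinatorial fact that the intervals $\Delta_{j, n}, \Delta_{j+1, n}, \ldots, \Delta_{q_n-1, n}$, together with their Sullivan neighborhoods, lie outside $\Delta_{0, n}$ for $j \ge 1$ -- which is precisely the setup of Guckenheimer's original argument cited here and treated in detail in \cite[Section VI.2]{demelovanstrien}.
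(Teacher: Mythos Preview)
Your reduction to a uniform derivative bound $|Df^{q_n-j}(y)| \geq c(f) > 0$ is correct, but the proposed method for establishing it does not close. Koebe distortion applied to the inverse branch gives only $|Df^{q_n-j}(y)| \asymp |\Delta_{q_n,n}|/|\Delta_{j,n}|$, which is precisely the quantity you are trying to bound from below --- so this is circular. Your attempt to rescue it via an upper bound on $|\widetilde V_{j,n}|$ coming from the pairwise disjointness of the Sullivan neighborhoods yields at best $|\widetilde V_{j,n}| \leq |I_0|$, an \emph{absolute} bound that does not scale with $|\Delta_{0,n}|$; any length comparison based on it produces a lower bound for $|Df^{q_n-j}|$ of order $|V_{0,n}|/|\widetilde V_{j,n}| \asymp |\Delta_{0,n}|$, which tends to zero with $n$ and is therefore useless. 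To make the comparison work you would need $|\widetilde V_{j,n}| \leq C|\Delta_{0,n}|$, but since $\widetilde V_{j,n}\supseteq \Delta_{j,n}$ this already presupposes the conclusion of the lemma.

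The paper's argument does not go through Koebe at all here: it cites Guckenheimer's original proof, whose essential tool is the \emph{Minimum Principle} for maps with negative Schwarzian derivative. That principle forces the minimum of $|Df^{q_n-j}|$ on $\Delta_{j,n}$ to occur at an endpoint, where the derivative can be controlled directly from the structure of the critical orbit. This is exactly the extra input that a pure space/distortion argument cannot supply, and it is why the paper stresses that $C^3$ regularity (needed for Schwarzian/cross-ratio control) is genuinely required at this step. You invoke Guckenheimer only to justify the diffeomorphic extension of the inverse branch, but that is not where his argument does its work.
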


\begin{proof}[Proof of Lemma \ref{centralmaior}] This is proven in \cite[page 760]{deFariadeMeloPinto}; see also \cite[page 144]{guck}.
\end{proof}

The argument used by Guckenheimer relies on the Minimum Principle for maps with negative Schwarzian derivative (see \cite[Section II.6, Lemma 6.1]{demelovanstrien} for its statement) so at this point we need the fact that $f$ is of class $C^3$.

With Proposition \ref{upperbound}, Lemma \ref{novoA} and Lemma \ref{centralmaior} at hand we are ready to prove Theorem \ref{unbounded}:

\begin{proof}[Proof of Theorem \ref{unbounded}] The integrability of $\log|Df|$ was obtained by Przytycki \cite[Theorem B]{p93} so let us prove that $\int_{I_0}\!\log|Df|\,d\mu$ is zero. For this purpose define $A\subset\mathcal{I}_f$ to be the set of points $x\in\mathcal{I}_f$ such that $v_n(x)\to+\infty$ as $n\to+\infty$, where $\big\{v_n(x)\big\}$ is the sequence of entrance times of $x\in\mathcal{I}_f$, see Definition \ref{entrtimes}. We also define $B\subset\mathcal{I}_f$ to be the set of points $x\in\mathcal{I}_f$ such that:$$\lim_{n \to +\infty}\left\{\frac{\log\big|Df^n(x)\big|}{n}\right\}=\int_{I_0}\!\log|Df|\,d\mu\,,$$and note that $\mu(A \cap B)=1$ by Lemma \ref{novoA} and Birkhoff's ergodic theorem. By Proposition \ref{upperbound} we get$$\limsup_{n\in\nt}\left\{\frac{\log\big|Df^{q_n}(x)\big|}{q_n}\right\} \leq 0\quad\mbox{for all $x\in\mathcal{I}_f$,}$$and recall that from this it follows at once that $f$ does not satisfy the Collet-Eckmann condition. Moreover we already conclude that:
\begin{equation}\label{menor}
\lim_{n \to +\infty}\left\{\frac{\log\big|Df^{n}(x)\big|}{n}\right\} \leq 0\quad\mbox{for all $x \in B$.}
\end{equation}

Now let $x \in A$. For each $n\in\nt$, $f^{v_n(x)}$ maps $\Delta_{q_n-v_n(x),n} \ni x$ into $\Delta_{0,n}$. By Lemma \ref{BoDist} there exists $K_1=K_1(f)>1$ such that:$$\frac{1}{K_1}\frac{|\Delta_{0,n}|}{|\Delta_{q_n-v_n(x),n}|}\leq\big|Df^{v_n(x)}(x)\big|\leq K_1\frac{|\Delta_{0,n}|}{|\Delta_{q_n-v_n(x),n}|}\quad\mbox{for all $n \geq 1$.}$$

Applying Lemma \ref{centralmaior} we deduce in particular that for all $x \in A$ and for all $n \geq 1$:$$\big|Df^{v_n(x)}(x)\big|\geq\frac{\varepsilon}{K_1}>0$$and since $v_n(x)\to+\infty$ for all $x \in A$ it follows that:$$\liminf_{n\in\nt}\left\{\frac{\log\big|Df^{v_n(x)}(x)\big|}{v_n(x)}\right\} \geq 0\quad\mbox{for all $x \in A$.}$$

Therefore:
\begin{equation}\label{maior}
\lim_{n \to +\infty}\left\{\frac{\log\big|Df^{n}(x)\big|}{n}\right\} \geq 0\quad\mbox{for all $x \in A \cap B$.}
\end{equation}

Combining \eqref{menor} with \eqref{maior} we obtain$$\lim_{n \to +\infty}\left\{\frac{\log\big|Df^{n}(x)\big|}{n}\right\}=0\quad\mbox{for all $x \in A \cap B$}$$and since $\mu(A \cap B)=1$ this implies $\int_{I_0}\!\log|Df|\,d\mu=0$, as we wanted to prove.
\end{proof}

Let us finish Section \ref{sec:unimodal} with the following remark about the $\mu$-integrability of $\log|Df|$.

\begin{lemma}\label{remarkint} There exists $C=C(f)>1$ such that:
\begin{equation}\label{condint}
\frac{1}{C}\sum_{n\in\nt}\frac{1}{q_{n+1}}\log\frac{|\Delta_{0,n}|}{|\Delta_{0,n+1}|}\leq\int_{I_0}\!\big|\log|Df|\big|\,d\mu\leq C\sum_{n\in\nt}\frac{1}{q_{n}}\log\frac{|\Delta_{0,n}|}{|\Delta_{0,n+1}|}
\end{equation}
\end{lemma}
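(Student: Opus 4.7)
The plan is to reduce the problem to estimating $\int_{I_0}|\log|x||\,d\mu$, decompose this integral over the shells $A_n=\Delta_{0,n}\setminus\Delta_{0,n+1}$, and apply Abel summation. Since the non-flat critical point sits at $0$ with criticality $d>1$, the expansion $|Df(x)|\asymp|x|^{d-1}$ holds in a neighborhood of $0$, while $|Df|$ is bounded above and below by positive constants elsewhere. Hence $\bigl||\log|Df(x)||-(d-1)|\log|x||\bigr|$ is uniformly bounded on $I_0$, so $\int|\log|Df||\,d\mu$ and $(d-1)\int|\log|x||\,d\mu$ differ by an additive $O(1)$, and the task reduces to bounding the latter integral by the two sums in the statement.

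The shells $\{A_n\}_{n\geq 0}$ partition $I_0\setminus\{0\}$ modulo $\mu$-null sets. Using $\mu(\Delta_{0,n})=1/q_n$ together with $q_{n+1}=a_n q_n$ and $a_n\geq 2$, one computes
\[
\mu(A_n)\;=\;\frac{1}{q_n}-\frac{1}{q_{n+1}}\;=\;\Bigl(1-\frac{1}{a_n}\Bigr)\frac{1}{q_n}\;\asymp\;\frac{1}{q_n}.
\]
On $A_n$ one has $|\Delta_{0,n+1}|/2\leq|x|\leq|\Delta_{0,n}|/2$, so writing $L_n:=\log(2/|\Delta_{0,n}|)$ (with $L_0=0$), one has $L_n\leq|\log|x||\leq L_{n+1}$. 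Integrating shell by shell yields
\[
\sum_{n}\frac{L_n}{q_n}\;\lesssim\;\int_{I_0}|\log|x||\,d\mu\;\lesssim\;\sum_{n}\frac{L_{n+1}}{q_n}.
\]

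Abel summation via the telescoping identity $L_n=\sum_{k=0}^{n-1}\log(|\Delta_{0,k}|/|\Delta_{0,k+1}|)$ now finishes the job. For the upper bound, exchange the order of summation and use $q_{n+1}/q_n=a_n\geq 2$ (so that $\sum_{n\geq k}1/q_n\leq 2/q_k$) to obtain $\sum_n L_{n+1}/q_n\leq 2\sum_k q_k^{-1}\log(|\Delta_{0,k}|/|\Delta_{0,k+1}|)$. For the lower bound, keeping just the leading term $\sum_{n>k}1/q_n\geq 1/q_{k+1}$ gives $\sum_n L_n/q_n\geq \sum_k q_{k+1}^{-1}\log(|\Delta_{0,k}|/|\Delta_{0,k+1}|)$. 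Combined with the reduction step, and absorbing the additive $O(1)$ residue into the multiplicative constant (permissible since both sums exceed the positive first term $q_1^{-1}\log(|\Delta_{0,0}|/|\Delta_{0,1}|)$ depending only on $f$), this yields both inequalities of the lemma.

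The main obstacle is securing the denominator $q_{k+1}$ in the lower bound. The weaker estimate $\mu(A_n)\geq 1/q_{n+1}$, which might seem adequate at first glance, only produces $1/q_{k+2}$ after the same Abel summation; this is strictly smaller than any fixed multiple of $1/q_{k+1}$ whenever the renormalization periods $a_{k+1}$ are unbounded. Obtaining the correct denominator therefore requires the sharp comparability $\mu(A_n)\asymp 1/q_n$, which itself relies on the universal lower bound $a_n\geq 2$ for the renormalization periods.
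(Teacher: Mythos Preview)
Your proof is correct and follows essentially the same route as the paper: reduce to $\int|\log|x||\,d\mu$ via the power-law expansion at the critical point, decompose over the shells $\Delta_{0,n}\setminus\Delta_{0,n+1}$, and apply Abel summation. The only cosmetic difference is that the paper keeps the exact value $\mu(A_n)=1/q_n-1/q_{n+1}$ and telescopes directly on that difference, whereas you first replace $\mu(A_n)$ by $\asymp 1/q_n$ and then telescope $L_n$; these are the same computation organized in opposite orders, and your closing remark on why the sharp comparability $\mu(A_n)\asymp 1/q_n$ (rather than merely $\geq 1/q_{n+1}$) is needed for the lower bound is a point the paper leaves implicit.
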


\begin{proof}[Proof of Lemma \ref{remarkint}] We just need to imitate the procedure of Section \ref{integrability}, during the proof of Proposition \ref{int}. Indeed, let $\psi=\big|\log|Df|\big|$ and for each $n\in\nt$ write $\psi_n=\psi\cdot\chi_{I_0\setminus\Delta_{0,n}}$. Using the power-law at the critical point $c=0$ we see that:
\begin{equation}\label{Intprimest}
\frac{1}{C_0}\log\frac{1}{|x|}\leq\psi(x)\leq C_0\log\frac{1}{|x|}
\end{equation}
for all $x\in\Delta_{0,n}\!\setminus\!\{0\}$ and $n\in\nt$ big enough. Now observe that:
\begin{equation}\label{Intsegest}
\int_{I_0}\!\psi_n\,d\mu=\int_{I_0\setminus\Delta_{0,n}}\!\psi_n\,d\mu=\sum_{j=0}^{n-1}\int_{\Delta_{0,j}\setminus\Delta_{0,j+1}}\!\psi_n\,d\mu\,.
\end{equation}

Since $|\Delta_{0,n+1}|\leq|x|\leq|\Delta_{0,n}|$ for all $x\in\Delta_{0,n}\!\setminus\!\Delta_{0,n+1}$ we see from \eqref{Intprimest} and \eqref{Intsegest} that:
\begin{equation}\label{Intterest}
\frac{1}{C_1}\sum_{j=0}^{n-1}\mu(\Delta_{0,j}\setminus\Delta_{0,j+1})\log\frac{1}{|\Delta_{0,j}|}\leq\int_{I_0}\!\psi_n\,d\mu\leq C_1\sum_{j=0}^{n-1}\mu(\Delta_{0,j}\setminus\Delta_{0,j+1})\log\frac{1}{|\Delta_{0,j+1}|}.
\end{equation}

Since $\displaystyle\mu(\Delta_{0,j}\!\setminus\!\Delta_{0,j+1})=\frac{1}{q_j}-\frac{1}{q_{j+1}}$ for all $j\in\nt$ we obtain from \eqref{Intterest} and a simple telescoping trick that:$$\frac{1}{C_2}\sum_{j=0}^{n-1}\frac{1}{q_{j+1}}\log\frac{|\Delta_{0,j}|}{|\Delta_{0,j+1}|}\leq\int_{I_0}\!\psi_n\,d\mu\leq C_2\sum_{j=0}^{n-1}\frac{1}{q_{j}}\log\frac{|\Delta_{0,j}|}{|\Delta_{0,j+1}|}$$and since $\psi_n\nearrow\psi$ as $n$ goes to infinity we are done.
\end{proof}

Note in particular that if $f$ is infinitely renormalizable of bounded type, then the series on the right-hand side of \eqref{condint} converges, since by the real bounds we know that $|\Delta_{0,n}|\asymp|\Delta_{0,n+1}|$ (the so-called \emph{bounded geometry}, see \cite[Section VI.2, Theorem 2.1, item 2]{demelovanstrien}), and the $q_n$'s grow exponentially fast. Note also that the integrability of $\log|Df|$ proved by Przytycki in \cite[Theorem B]{p93} already mentioned shows that, in the general case, the series on the left-hand side of \eqref{condint} always converges.

\section{Neutral measures on Julia sets}\label{ratapp} In this section we show some applications of Theorems \ref{main} and \ref{unbounded} to holomorphic dynamics. Let $f:\widehat{\C}\to\widehat{\C}$ be a rational map of the Riemann sphere with degree greater than or equal to two, and let $\mathcal{M}_f$ 
be the set of all $f$-invariant ergodic Borel probability measures in the Riemann sphere, whose support is contained in the Julia set of 
$f$. By \cite[Theorem A]{p93} we know that $\log|Df|$ is $\mu$-integrable for any $\mu\in\mathcal{M}_f$ (here $Df$ denotes the derivative 
considered with respect to the spherical metric). The \emph{Lyapunov exponent} of $f$ with respect to $\mu$ is the real 
number $\chi(\mu)=\int_{\widehat{\C}}\log{|Df|}\,d\mu$. By Birkhoff's Ergodic Theorem, 
\[
\chi(\mu)=\lim_{n \to +\infty}\left\{\frac{\log|Df^n(z)|}{n}\right\}
\] 
for $\mu$-almost every $z\in\widehat{\C}$. If $f$ is \emph{hyperbolic}, {\it i.e.} if each critical point is either periodic or 
contained in the basin of an attracting periodic orbit, then there exists $\chi>0$ such that $\chi(\mu)>\chi$ for any $\mu\in\mathcal{M}_f$. 
If $f$ is not hyperbolic, then either $f$ has a parabolic periodic orbit or one of its critical points lies in its Julia set (both phenomena can occur 
simultaneously). In the first case, the average of the Dirac measures supported along some parabolic periodic orbit gives an element 
$\mu\in\mathcal{M}_f$ such that $\chi(\mu)=0$ (take for example $f(z)=z^2+1/4$ and $\mu=\delta_{1/2}\in\mathcal{M}_f$). Being purely atomic, 
these measures are not so interesting. In the second case (when the Julia set contains a critical point) one may have a non-atomic measure $\mu\in\mathcal{M}_f$ such that $\chi(\mu)=0$. Following \cite{p93} we call such a measure \emph{neutral}.

As an example, consider the one-parameter family $f_{\omega}:\widehat{\C}\to\widehat{\C}$ of 
Blaschke products in the Riemann sphere given by:
\begin{equation}\label{defdelBlas}
f_{\omega}(z)=e^{2\pi i\omega}z^2\left(\frac{z-3}{1-3z}\right)\quad\mbox{for $\omega\in[0,1)$.}
\end{equation}

Just as any Blaschke product, every map in this family commutes with the geometric involution around the unit circle $\Phi(z)=1/\bar{z}$ 
(note that $\Phi$ is the identity in the unit circle). In particular every map in this family leaves invariant the unit circle (Blaschke products \emph{are} the rational maps leaving invariant the unit circle), and its restriction to $S^1$ is a real-analytic critical circle map with a unique 
critical point at $1$, which is of cubic type, and with critical value $e^{2\pi i\omega}$ (the fact that $f_{\omega}$ has topological degree one, when restricted to the unit circle, follows from the Argument Principle, since it has two zeros and one pole in the unit disk). By monotonicity of the rotation number (see for instance \cite{hermanihes} and \cite{demelovanstrien}) we know that for each irrational number $\theta$ in $[0,1)$ there exists a unique $\omega$ in $[0,1)$ such that the rotation number of $f_{\omega}|_{S^1}$ is $\theta$. 
As an application of Theorem \ref{main} we have the following result.

\begin{maintheorem}\label{exrat} Let $\omega$ in $[0,1)$ such that the rotation number of $f_{\omega}|_{S^1}$ is irrational. Then the rational 
function $f_{\omega}$ admits a non-atomic invariant ergodic Borel probability measure $\mu$ whose support is contained in the Julia set 
$J_{\omega}$ of $f_{\omega}$ and such that $\chi(\mu)=0$. Moreover, $f_{\omega}$ does not satisfy the Collet-Eckmann condition.
\end{maintheorem}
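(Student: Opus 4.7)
The plan is to apply Theorem \ref{main} to the restriction $f_\omega|_{S^1}$, which is a real-analytic (hence $C^3$) critical circle map with a single cubic non-flat critical point at $c=1$ and the prescribed irrational rotation number, and then transport the resulting measure to the ambient rational dynamics on $\widehat{\C}$. Theorem \ref{main} produces a unique $f_\omega|_{S^1}$-invariant Borel probability measure $\mu$ on $S^1$; it is non-atomic (being the push-forward of Lebesgue measure by the topological conjugacy with the corresponding rigid rotation) and satisfies $\int_{S^1}\log Df_\omega\,d\mu=0$.

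The key step, and what I expect to be the main obstacle, is verifying that $S^1\subseteq J_\omega$. By minimality of $f_\omega|_{S^1}$ the forward orbit of $c=1$ is dense in $S^1$. Suppose for contradiction that $c$ lies in a Fatou component $U_0$, with $f_\omega^n(c)\in U_n$. By Sullivan's non-wandering theorem the sequence $(U_n)$ is eventually periodic of some period $p$. The $\omega$-limit of $c$, which equals $S^1$, must therefore be either a finite (super)attracting/parabolic periodic cycle, or a union of $p$ invariant topological circles foliating a Siegel-disk cycle or a Herman-ring cycle. The first possibility is ruled out because $S^1$ is uncountable. In the two remaining cases the $\omega$-limit is a single topological circle only if $p=1$ and $S^1$ is that invariant circle inside a single fixed Fatou component $U$. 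If $U$ is a Siegel disk, then $U$ is simply connected, so $\widehat{\C}\setminus U$ is connected; since $\widehat{\C}\setminus U\subseteq \widehat{\C}\setminus S^1=\D\sqcup(\widehat{\C}\setminus\bar{\D})$, it lies entirely in one of these two components, forcing $U$ to contain either $\D$ or $\widehat{\C}\setminus\bar{\D}$, and hence one of the superattracting fixed points $0$ or $\infty$ of $f_\omega$, which is impossible in a Siegel disk. If $U$ is a Herman ring, the conformal linearization of $U$ identifies $S^1$ with a concentric circle and conjugates $f_\omega|_{S^1}$ to a rigid rotation through a real-analytic diffeomorphism of $S^1$, forcing $f_\omega|_{S^1}$ to be a diffeomorphism and contradicting the presence of the critical point $c$. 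Therefore $c\in J_\omega$, and by complete invariance and closedness of the Julia set
\[
S^1 \;=\; \overline{\{f_\omega^n(c):n\ge 0\}} \;\subseteq\; J_\omega.
\]

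Once $S^1\subseteq J_\omega$ is in hand, the rest is routine. Regarding $\mu$ as a Borel probability measure on $\widehat{\C}$ supported on $S^1$, it is $f_\omega$-invariant; ergodicity under $f_\omega$ follows from that of $f_\omega|_{S^1}$, because any $f_\omega$-invariant Borel subset of $\widehat{\C}$ meets $S^1$ in an $f_\omega|_{S^1}$-invariant set of the same $\mu$-measure. To compute the Lyapunov exponent I would use that for $z\in S^1$ one has $|z|=|f_\omega(z)|=1$, so the spherical derivative reduces to the Euclidean modulus,
\[
|Df_\omega|_\sigma(z) \;=\; |f_\omega'(z)|\cdot\frac{1+|z|^2}{1+|f_\omega(z)|^2} \;=\; |f_\omega'(z)| \;=\; Df_\omega(z),
\]
the last equality identifying the complex modulus with the circle-map derivative. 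Consequently $\chi(\mu)=\int_{\widehat{\C}}\log|Df_\omega|_\sigma\,d\mu=\int_{S^1}\log Df_\omega\,d\mu=0$.

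Finally, for the Collet-Eckmann assertion, the critical point $c=1$ lies in $J_\omega$, its forward orbit stays on $S^1$, and the identification of derivatives above yields $|Df_\omega^n(f_\omega(c))|_\sigma=Df_\omega^n(f_\omega(c))$ for every $n\ge 1$. Theorem \ref{main} furnishes a subsequence along which $(1/n)\log Df_\omega^n(f_\omega(c))\to 0$, precluding the exponential growth required by the Collet-Eckmann condition for the rational map $f_\omega$ at $c$, and therefore $f_\omega$ does not satisfy Collet-Eckmann.
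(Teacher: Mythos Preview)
Your proof is correct and follows the same overall strategy as the paper: apply Theorem~\ref{main} to $f_\omega|_{S^1}$ and verify that $S^1\subseteq J_\omega$. Your treatment of the spherical-versus-arclength derivative on $S^1$ is a useful detail that the paper leaves implicit.

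The only substantive difference is in how you establish $S^1\subseteq J_\omega$, and here the paper's argument is considerably shorter. Rather than invoking Sullivan's theorem and analyzing the $\omega$-limit of $c$, the paper first observes that, since $J_\omega$ is closed and totally invariant and $f_\omega|_{S^1}$ is minimal, the set $S^1\cap J_\omega$ is either empty or all of $S^1$. If it is empty, then $S^1$ lies in a single Fatou component $U$ (by connectedness), and invariance of $S^1$ forces $f_\omega(U)=U$. Because $U$ contains an invariant simple closed curve it must be a Siegel disk or a Herman ring, and in either case $f_\omega|_U$ is a biholomorphism---which is impossible since the critical point $c=1$ lies in $U$. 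This single contradiction replaces your two separate analyses (the topological argument using the superattracting points $0,\infty$ for the Siegel case, and the linearization argument for the Herman case). Both routes are valid; the paper's is just more economical.
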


\begin{proof}[Proof of Theorem \ref{exrat}] The measure $\mu$ is precisely the unique invariant measure supported on the unit circle. By Theorem \ref{main}, we only need to prove that the unit circle is contained in the Julia set. Since $S^1$ is 
$f_{\omega}$-invariant and $f_{\omega}|_{S^1}$ is minimal (by Yoccoz's result \cite{yoccoz}), either we have $S^1 \subset J_{\omega}$, 
either $S^1 \cap J_{\omega}=\O$. Suppose, by contradiction, that $S^1 \cap J_{\omega}=\O$, and let $U$ be the Fatou component 
of $f_{\omega}$ containing $S^1$. By the invariance of the unit circle, $U$ is mapped into itself by $f_{\omega}$, and therefore it must be 
a Siegel disk or a Herman ring (precisely because it has an invariant simple closed curve on its interior). But in that case 
$f_{\omega}:U \to U$ would be a biholomorphism, which is impossible since it has a critical point in the unit circle. 
Therefore $S^1 \subset J_{\omega}$, and Theorem \ref{exrat} follows directly from Theorem \ref{main}.
\end{proof}

By the main result of \cite{PJuanSm}, and since there is only one critical point in the Julia set $J_{\omega}$, we deduce that 
$f_{\omega}$ does not satisfy any of the standard definitions of non-uniform hyperbolicity for rational maps in the Riemann 
sphere (topological Collet-Eckmann condition, uniform hyperbolicity of periodic orbits in the Julia set, etc.).

We remark that Theorem \ref{unbounded} in \S \ref{sec:unimodal} yields analogous examples to those given in 
Theorem \ref{exrat} above, in the context of polynomials. For instance, one can take $f(z)=z^2+c_{\infty}$, 
where $c_{\infty}\in[-2,1/4]$ denotes the infinitely renormalizable parameter for period doubling 
(the so-called \emph{Feigenbaum} parameter). In that case, the neutral measure is the unique invariant measure 
supported in the closure of the post-critical set of $f$, which is non-atomic as explained in Section \ref{sec:unimodal}.

\subsection{Other examples} Let $\theta\in[0,1]$ be an irrational number, and consider the quadratic polynomial 
$P_{\theta}:\C\to\C$ given by:
\begin{equation}\label{peteta}
P_{\theta}(z)=e^{2\pi i\theta}z+z^2\,.
\end{equation}

The origin is a fixed point of $P_{\theta}$, and $DP_{\theta}(0)=e^{2\pi i\theta}$ has modulus one. Let us assume 
that $\theta$ is of \emph{bounded type}, that is, there exists $\varepsilon>0$ such that:
$$\left|\theta-\frac{p}{q}\right|\geq\frac{\varepsilon}{q^2}\,,$$for any integers $p$ and $q \neq 0$. 
By a classical result of Siegel $P_{\theta}$ is \emph{linearizable} around the origin: there exists a simply-connected 
component $\Omega_{\theta}$ of the Fatou set of $P_{\theta}$, a \emph{Siegel disk}, that contains the origin and where 
$P_{\theta}$ is conformally conjugate to its linear part $z \mapsto e^{2\pi i\theta}z$, an irrational rotation acting 
on the unit disk. A famous theorem of Douady \cite{douady} (see also the recent paper \cite{zhang}) asserts that 
$\partial\,\Omega_{\theta}$ is a quasi-circle that contains the (unique) critical point of $P_{\theta}$. 
Moreover, there exist $\omega=\omega(\theta)\in(0,1)$ and a quasiconformal homeomorphism 
$\phi_{\theta}:\C\to\C$ with $\phi_{\theta}(\D)=\Omega_{\theta}$ and such that 
$\phi_{\theta} \circ f_{\omega}=P_{\theta}\circ\phi_{\theta}$ on $\C\setminus\D$, where $f_{\omega}$ 
is the Blaschke product given by \eqref{defdelBlas}. In particular $\phi_{\theta}:S^1\to\partial\,\Omega_{\theta}$ 
is a quasisymmetric homeomorphism which conjugates $f_{\omega}$ with $P_{\theta}$. Combining this with 
Theorem \ref{exrat} we get the following result.

\begin{coro}\label{corosiegel} Let $\theta\in[0,1]$ be an irrational number of bounded type, and let 
$P_{\theta}$ be the quadratic polynomial given by \eqref{peteta}. Then the harmonic measure on the boundary 
of the Siegel disk $\Omega_{\theta}$, viewed from the origin, is a neutral measure for $P_{\theta}$.
\end{coro}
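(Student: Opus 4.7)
The plan has two parts: (a) identify the harmonic measure $\nu$ with the pushforward of the neutral measure $\mu$ from Theorem \ref{exrat} under the boundary conjugacy $\phi_\theta|_{S^1}$, and (b) verify $\chi(\nu)=0$ using the internal Riemann uniformization of $\Omega_\theta$.

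First I would set up the Riemann uniformization. Let $\psi\colon\D\to\Omega_\theta$ be the Riemann map with $\psi(0)=0$ and $\psi'(0)>0$. By Douady's theorem $\partial\Omega_\theta$ is a quasicircle, hence a Jordan curve, so $\psi$ extends to a homeomorphism $\overline{\D}\to\overline{\Omega_\theta}$. Let $m$ denote normalized Lebesgue on $S^1$; by definition, the harmonic measure on $\partial\Omega_\theta$ viewed from the origin is $\nu=\psi_{*}m$. The functional equation $P_\theta\circ\psi=\psi\circ R_\theta$ on $\D$ extends continuously to $\overline{\D}$, so $\psi|_{S^1}$ conjugates $R_\theta$ to $P_\theta|_{\partial\Omega_\theta}$; in particular $\nu$ is $P_\theta$-invariant, non-atomic, ergodic, and supported on $\partial\Omega_\theta\subset J_\theta$.

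For (a): if $h\colon S^1\to S^1$ is the topological conjugacy between $f_\omega|_{S^1}$ and $R_\theta$, then both $\psi|_{S^1}$ and $\phi_\theta\circ h^{-1}$ conjugate $R_\theta$ to $P_\theta|_{\partial\Omega_\theta}$. Since $R_\theta$ is uniquely ergodic, the pushforwards of $m$ under these two maps coincide, so $\nu=(\phi_\theta|_{S^1})_{*}\mu$, with $\mu$ the neutral measure on $S^1$ from Theorem \ref{exrat}. This identification ties $\nu$ to the critical circle map dynamics, but is not strictly needed for what follows.

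For (b): differentiating $P_\theta\circ\psi=\psi\circ R_\theta$ on $\D$ gives
\[
\bigl|P_\theta'(\psi(w))\bigr| \;=\; \frac{|\psi'(R_\theta(w))|}{|\psi'(w)|},\qquad w\in\D.
\]
By Fatou's theorem, finite non-tangential boundary values of $\psi'$ exist $m$-a.e.\ on $S^1$, and the identity passes to these boundary values. Since $\partial\Omega_\theta$ is a quasicircle, a classical theorem (cf.\ Pommerenke, \emph{Boundary Behaviour of Conformal Maps}) gives $\log|\psi'|\in\mathrm{BMO}(S^1)\subset L^1(m)$. By the change of variables $\nu=\psi_{*}m$ and the $R_\theta$-invariance of $m$,
\[
\chi(\nu)\;=\;\int_{\partial\Omega_\theta}\!\log|P_\theta'|\,d\nu \;=\; \int_{S^1}\!\log|\psi'(R_\theta w)|\,dm(w)-\int_{S^1}\!\log|\psi'(w)|\,dm(w) \;=\; 0,
\]
showing that $\nu$ is a neutral measure for $P_\theta$.

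The main obstacle is the $L^1$-integrability of $\log|\psi'|$ on $S^1$, which is needed to telescope the two integrals above; this is precisely where the bounded-type hypothesis enters, via Douady's quasicircle theorem for $\partial\Omega_\theta$.
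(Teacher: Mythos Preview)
Your proof is correct but takes a genuinely different route from the paper's. The paper deduces the corollary from Theorem~\ref{exrat} by transporting the neutral measure $\mu$ of the Blaschke product $f_\omega$ along the quasiconformal conjugacy $\phi_\theta$, and then invoking the general (non-trivial) fact that quasiconformal conjugacies between rational maps preserve zero Lyapunov exponents (via Koebe distortion plus the bi-H\"older property of quasiconformal maps, cf.\ \cite[Lemma~8.3]{rlp}); the identification of $(\phi_\theta)_*\mu$ with the harmonic measure then comes from unique ergodicity on $\partial\Omega_\theta$. You instead work from inside the Siegel disk: the linearization $P_\theta\circ\psi=\psi\circ R_\theta$ exhibits $\log|P_\theta'|\circ\psi$ as an $R_\theta$-coboundary with transfer function $\log|\psi'|$, so its $m$-integral vanishes once $\log|\psi'|\in L^1(m)$. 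Your approach thus bypasses both Theorem~\ref{exrat} (and hence the real bounds for critical circle maps) and the preservation-of-exponents lemma, at the cost of importing a fact from geometric function theory; the paper's route, by contrast, presents the corollary as a genuine application of its main results. One minor remark on your key technical input: rather than citing BMOA for quasidisks, it suffices to note that $\psi'\in H^p$ for some $p>0$ (true for Riemann maps onto quasidisks) and appeal to the standard $H^p$ fact that a nontrivial $H^p$ function has $\log|f|\in L^1$ on the boundary circle.
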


Corollary \ref{corosiegel} follows from Theorem \ref{exrat} and the general fact that a quasiconformal conjugacy 
between rational 
maps preserves zero Lyapunov exponents (one way to see this is to combine Koebe's distortion lemma with the fact 
that a quasiconformal homeomorphism is bi-H\"older and the argument in \cite[Lemma 8.3]{rlp}).

More generally, the harmonic measure at the boundary of any Siegel disk or at the boundary of any Herman ring is a 
neutral measure, as stated in the introduction of \cite{p93}. In particular, by taking some polynomial or 
rational function $f$ with a Siegel disk whose boundary does not contain critical points (for such an example see 
\cite{abc} and the references therein) we obtain $\mu\in\mathcal{M}_f$ such that $\chi(\mu)=0$, $\mu$ has no atoms 
and $\supp(\mu)$ contains no critical point of $f$. Note, however, that for any rational function $f$ and any neutral 
measure $\mu$ we have that $\supp(\mu)$ must be contained in the closure of the forward orbit of some critical point 
of $f$, otherwise $f$ would be expanding on $\supp(\mu)$ and then $\mu$ would not be neutral.

Finally, let us mention that Cortez and Rivera-Letelier have constructed in \cite{juan1} and \cite{juan2} 
real quadratic polynomials for which the $\omega$-limit set of the critical point is a minimal Cantor 
set (necessarily contained in the Julia set) supporting \emph{any prescribed} number of neutral measures 
(finite, countable infinite or uncountable). This is in sharp contrast with the examples discussed above, where $f|_{\supp(\mu)}$  is uniquely ergodic.

\section{Further questions}\label{sec:final}

We conclude with some questions (in addition to Question \ref{expcrit} posed at the end of Section \ref{sec:proofofmain}).

Let $f$ be a rational map of the Riemann sphere with degree greater than or equal to two, and let $\mu$ be a 
neutral measure for $f$, that is, a non-atomic $f$-invariant ergodic Borel probability measure, whose support is 
contained in the Julia set of $f$, and with Lyapunov exponent equal to zero (just as in the situation of Theorem \ref{exrat}).

\begin{question} Is it true that $h_{\htop}(f|_{\supp(\mu)})=0$? Is $f|_{\supp(\mu)}$ a minimal dynamical system? 
Is it true, at least, that $\supp(\mu)$ has no periodic orbits?
\end{question}

%\begin{question} Let $f$ be a rational map with no parabolic orbits, and suppose that $f$ is neither hyperbolic nor Collet-Eckmann. Then we have that $\inf\big\{\chi(\mu):\mu\in\mathcal{M}_f\big\}=0$, see \cite{PJuanSm}. Is this infimum a minimum? 
%More precisely, is it true that $f$ has a non-atomic neutral measure?
%\end{question}

\begin{question} What are all the examples of rational maps having some non-atomic invariant ergodic Borel probability 
measure, supported inside its Julia set, and with zero Lyapunov exponent?
\end{question}

A difficult problem in the context of critical circle maps with finitely many critical points is the one of 
\emph{geometric rigidity}. More precisely, let $f$ and $g$ be two orientation preserving $C^3$ circle homeomorphisms 
with the same irrational rotation number, and with $N \geq 1$ non-flat critical points of odd type. 
Denote by $S_f=\{c_1,...,c_N\}$ the ordered critical set of $f$, by $S_g=\{c'_1,...,c'_N\}$ the ordered 
critical set of $g$, and suppose that the criticalities of $c_i$ and $c'_i$ are the same for all 
$i \in \{1,...,N\}$ (the cubic case is the generic one). Finally, denote by $\mu_f$ and $\mu_g$ the corresponding 
unique invariant measures of $f$ and $g$.

By Yoccoz's result \cite{yoccoz} we know that $f$ and $g$ are topologically conjugate to each other. 
By elementary reasons, the condition $\mu_f\big([c_i,c_{i+1}]\big)=\mu_g\big([c'_i,c'_{i+1}]\big)$ 
for all $i \in \{1,...,N-1\}$ is necessary (and sufficient) in order to have a topological conjugacy 
between $f$ and $g$ that sends the critical points of $f$ to the critical points of $g$. Under this assumption, 
it turns out that this conjugacy is in fact a \emph{quasisymmetric} homeomorphism. This follows from a recent 
general result of Clark and van Strien \cite{treseb}. In this context it also follows from the real bounds 
(see \cite[Corollary 4.6]{edsonwelington1} for the case of a single critical point, and the forthcoming 
article \cite{estevezdefaria} for the multicritical case).

\begin{question}\label{QS} Is this conjugacy a smooth diffeomorphism?
\end{question}

In the case of exactly one critical point this question has been answered in the affirmative, in the real-analytic category.
This is due to the efforts of several mathematicians during the last twenty years (see \cite{avila}, 
\cite{tesisedson}, \cite{edson}, \cite{edsonwelington1}, \cite{edsonwelington2}, \cite{khaninteplinsky}, \cite{khmelevyampolsky}, \cite{yampolsky1}, \cite{yampolsky2}, \cite{yampolsky3} and \cite{yampolsky4}). A positive answer was recently obtained also in the $C^3$ category (see \cite{tesegua}, \cite{GMdM} and \cite{guamelo}). To the best of our knowledge, the case of more than one critical point remains completely open.

\section*{Acknowledgements}

We wish to thank Juan Rivera-Letelier for very useful comments about Section \ref{ratapp}, especially for pointing to 
us Corollary \ref{corosiegel}. We are also grateful to Trevor Clark, Gabriela Estevez, Katrin Gelfert, 
Mario Ponce and Charles Tresser for several conversations about these and related matters. Finally, we would like 
to thank the anonymous referee for his/her keen remarks and questions, which have lead to a considerable 
improvement of our paper.

\end{document}